\renewcommand{\int}{\smallint\!}
\newcommand{\op}{\mathrm{op}}
\DeclareMathOperator*{\colim}{colim}
\newcommand{\fp}{\mathrm{fp}}
\newcommand{\pf}{\mathrm{pf}}
\newcommand{\inv}{^{-1}}
\let\bar\overline
\DeclareMathOperator{\id}{id}
\newcommand{\cdh}{\mathrm{cdh}}
\newcommand{\red}{\mathrm{red}}
\newcommand{\pfp}{\mathrm{pfp}}
\newcommand{\pcdh}{\mathrm{pcdh}}
\newcommand{\idh}{\mathrm{idh}}
\newcommand{\h}{\mathrm{h}}
\newcommand{\rfp}{\mathrm{rfp}}
\newcommand{\Nis}{\mathrm{Nis}}
\newcommand{\cdp}{\mathrm{cdp}}
\newcommand{\Lpf}{\mathrm{L}^{\pf}}
\newcommand{\st}{\mathrm{st}}
\newcommand{\Cat}{\mathscr{C}\mathrm{at}}
\newcommand{\CCC}{\mathscr{C}}
\newcommand{\SSS}{\mathscr{S}}
\newcommand{\PPP}{\mathscr{P}}
\newcommand{\VVV}{\mathscr{V}}
\newcommand{\FFF}{\mathscr{F}}
\DeclareMathOperator{\End}{End}
\newcommand{\Sp}{{\mathscr{S}\mathrm{p}}}
\renewcommand{\Pr}{\mathscr{P}\mathrm{r}}
\newcommand{\PrL}{{\Pr}^\mathrm{L}}
\newcommand{\PrLo}{{\Pr}^{\mathrm{L},\otimes}}
\newcommand{\PrR}{{\Pr}^\mathrm{R}}
\newcommand{\Perf}{{\mathscr{P}\mathrm{erf}}}
\newcommand{\SchFp}{\Sch_{\F_p}}
\newcommand{\PerfFp}{\Perf_{\F_p}}
\newcommand{\Lidh}{\mathrm{L}_{\textup{idh}}}
\newcommand{\Lmot}{\mathrm{L}_{\textup{mot}}}
\newcommand{\Lcdh}{\mathrm{L}_{\textup{cdh}}}
\newcommand{\Lpcdh}{\mathrm{L}_{\pcdh}}
\newcommand{\Lh}{\mathrm{L}_{\h}}
\renewcommand{\L}{\mathrm{L}}
\DeclareMathOperator{\Fun}{Fun}
\newcommand{\PrLotimesst}{\Pr^{\mathrm{L},\otimes}_{\mathrm{st}}}
\newcommand{\Perfpfp}{\Perf^{\pfp}}
\newcommand{\Schfp}{\Sch^{\fp}}
\newcommand{\UH}{\mathrm{UH}}
\newcommand{\SchfpSn}{\Schfp_{S_0}}
\newcommand{\SchfpS}{\Schfp_{S}}
\newcommand{\SchSn}{\Sch_{S_0}}
\newcommand{\PerfS}{\Perf_S}
\newcommand{\PerfpfpS}{\Perfpfp_S}
\newcommand{\rSch}{\mathrm{r}\Sch}
\newcommand{\rSchSo}{\mathrm{r}\Sch_{S_1}}
\newcommand{\rSchFp}{\mathrm{r}\Sch_{\mathbb{F}_p}}
\newcommand{\SchSo}{\Sch_{S_1}}
\newcommand{\rSchrfpSo}{\rSch_{S_1}^{\rfp}}
\DeclareMathOperator{\map}{map}
\newcommand{\N}{\mathbb{N}}
\newcommand{\Z}{\mathbb{Z}}
\newcommand{\A}{\mathbb{A}}
\newcommand{\F}{\mathbb{F}}
\newcommand{\Fp}{\mathbb{F}_p}
\renewcommand{\P}{\mathbb{P}}
\newcommand{\Sm}{\mathscr{S}\mathrm{m}}
\renewcommand{\H}{\mathscr{H}}
\newcommand{\SH}{\mathrm{S}\mathscr{H}}
\newcommand{\SHu}{\underline{\SH}}
\newcommand{\Sh}{\mathscr{S}\mathrm{h}}
\newcommand{\G}{\mathbb{G}}
\newcommand{\T}{\mathbb{T}}
\newcommand{\Psh}{\mathscr{P}}
\newcommand{\oO}{\mathcal{O}}
\DeclareMathOperator{\Spec}{Spec}
\newcommand{\Sch}{\mathscr{S}\mathrm{ch}}
\newcommand{\barA}[1][1]{\overline{\A}{}^{#1}}
\newcommand{\barT}{\overline{\T}}
\newcommand{\barP}[1][1]{\overline{\P}{}^{#1}}
\newcommand{\uH}{\underline{\H}}
\newcommand{\Hpf}{\H^{\pf}}
\newcommand{\uSH}{\underline{\SH}}
\newcommand{\SHpf}{\SH^{\pf}}
\newcommand{\uHpf}{\underline{\H}^{\pf}}
\newcommand{\uSHpf}{\underline{\SH}^{\pf}}
\newcommand{\uphi}{\underline{\varphi}}
\newcommand{\upsi}{\underline{\psi}}
\newcommand{\uPhi}{\underline{\Phi}}
\newcommand{\uPsi}{\underline{\Psi}}
\DeclareMathOperator{\Hom}{Hom}
\newcommand{\Homu}{\underline{\Hom}}
\newcommand{\Frob}{\mathrm{Frob}}
\newcommand{\GL}{\mathrm{GL}}
\newcommand{\B}{\mathrm{B}}
\DeclareMathOperator{\K}{K}
\DeclareMathOperator{\KH}{KH}
\DeclareMathOperator{\KGL}{KGL}
\DeclareMathOperator{\BGL}{BGL}
\DeclareMathOperator{\Mdf}{Mdf}
\DeclareMathOperator{\ZR}{ZR}
\let\hat\widehat
\renewcommand{\to}[1][]{\overset{#1}{\rightarrow}}		
\newcommand{\inj}[1][]{\overset{#1}{\hookrightarrow}}		
\theoremstyle{definition}
\newtheorem{Def}{Definition}[section]
\newtheorem{Not}[Def]{Notation}
\newtheorem{Rem}[Def]{Remark}
\newtheorem{Rem*}[]{Remark}
\newtheorem{Exm}[Def]{Example}
\newtheorem*{Conj*}{Conjecture}
\theoremstyle{plain}
\newtheorem{Prop}[Def]{Proposition}
\newtheorem{Thm}[Def]{Theorem}
\newtheorem{Lem}[Def]{Lemma}
\newtheorem{Cor}[Def]{Corollary}
    \newcounter{zaehler}
	\newtheorem{introthm}[zaehler]{Theorem}
\newcommand{\citestacks}[1]{\cite[\href{https://stacks.math.columbia.edu/tag/#1}{Tag~#1}]{stacks-project}}
\title{Motivic homotopy theory for perfect schemes}
\author{Christian Dahlhausen}
\address{Institut für Mathematik, Universität Heidelberg, Im Neuenheimer Feld 205, 69121 Heidelberg, Germany}
\email{cdahlhausen@mathi.uni-heidelberg.de}
\author{Jeroen Hekking}
\address{Fakultät für Mathematik, Universität Regensburg, 93040 Regensburg, Germany}
\email{jeroen.hekking@ur.de}
\author{Storm Wolters}
\address{Mathematisch Instituut, Universiteit Leiden, Einsteinweg 55, 2333 CC Leiden, The Netherlands}
\email{s.wolters@math.leidenuniv.nl}
\begin{document}
\begin{abstract}
We construct a perfect version of Morel--Voevodsky's motivic homotopy category over a perfect base scheme in positive characteristic.
By checking the axioms of a coefficient system, we establish a six-functor formalism.
We show that multiplication by $p$ is already invertible in the perfect motivic homotopy catgory.
By work of Elmanto--Khan the functor sending an $\F_p$-scheme $S$ to the category $\SH(S)[1/p]$ is invariant under universal homeomorphisms, hence under perfections. Our result gives an explicit model for the localization of $\SH$ at the universal homeomorphisms, which we conclude is the same as $\SH[1/p]$.
\end{abstract}

\maketitle
\setcounter{tocdepth}{1}
\tableofcontents
\setcounter{tocdepth}{3}

\section*{Introduction}

To any $\F_p$-scheme $S_0$ with perfection $S$, we associate  the \textbf{unstable perfect homotopy category} (Definition~\ref{Def:perfect-H})
\[ \H^\pf(S) \coloneqq \Sh_\mathrm{Nis}^{\barA}(\Sm^\pf_S), \]
which is the full subcategory of $\PPP(\Sm^\pf_S)$ spanned by $\barA$-invariant perfect Nisnevich sheaves; here $\Sm^\pf_S$ denotes the category of perfectly smooth $S$-schemes and $\barA$ is the perfection of $\A^1$.
Subsequently, we define the \textbf{stable perfect motivic homotopy category} (Definition~\ref{Def:perfect-SH})
\[ \SH^\pf(S) \coloneqq \H^\pf(S)_\bullet[(\overline{\P}^1,\infty)\inv] \]
by $\otimes$-inverting the perfection $\barP[1]$ of $\P^1$ (pointed at $\infty$). We establish the following general result.

\begin{introthm}[Corollary~\ref{Cor:SH_motivic_coefficient_systems}] 
The functor which sends an $\Fp$-scheme $S_0$ with perfection $S$ to the $\infty$-category $\SH^\pf(S)$ satisfies smooth base change, the smooth projection formula, the gluing theorem, homotopy invariance, and Thom stability. 
\end{introthm}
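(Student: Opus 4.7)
The plan is to verify each of the five axioms for the functor $S_0 \mapsto \SH^\pf(S)$ directly, mirroring the classical arguments of Morel--Voevodsky and Hoyois adapted to the perfect setting.

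Two axioms are essentially built into the construction. Homotopy invariance is immediate from the definition of $\H^\pf(S)$ as the full subcategory of $\bar{\A}$-invariant Nisnevich sheaves, and it propagates to $\SH^\pf(S)$ through pointing and $(\bar{\P}^1,\infty)$-stabilization. Thom stability follows from inverting $\bar{\P}^1$ together with the standard reduction of Thom spaces of arbitrary vector bundles to line bundles via Zariski-local trivialization and Mayer--Vietoris; the line bundle case is a $\bar{\P}^1$-twist, hence invertible.

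For smooth base change I would argue in two stages. At the presheaf level on $\Sm^\pf_{(-)}$, the base-change equivalence for $f_\#$ is formal because perfectly smooth morphisms are preserved under arbitrary pullback and $f_\#$ is computed by left Kan extension; the equivalence then descends through Nisnevich sheafification and $\bar{\A}$-localization using stability of perfect Nisnevich covers and of $\bar{\A}$ under base change. The smooth projection formula follows from smooth base change together with the compatibility of $f_\#$ with the Day convolution tensor structure on $\SH^\pf$.

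The main obstacle is the gluing theorem: for a closed immersion $i : Z \hookrightarrow X$ in $\Sm^\pf_S$ with open complement $j : U \hookrightarrow X$, one needs $j_\# j^* \to \id \to i_* i^*$ to be a cofiber sequence. Classically this is Morel--Voevodsky's localization theorem, proved via homotopy purity and deformation to the normal cone. I would adapt that strategy: the perfection of the deformation space produces a zig-zag of $\bar{\A}$-homotopies between a perfect tubular neighborhood of $Z$ and the perfection of the normal bundle $N_{Z/X}$. Two points require care, namely (i) that perfection preserves the relevant Nisnevich excision squares and cofiber sequences---plausible since perfection is a filtered colimit of Frobenius twists and the perfect Nisnevich topology is inherited from the underlying $\F_p$-schemes---and (ii) that normal bundles of closed embeddings of perfectly smooth $S$-schemes are Nisnevich-locally trivial, which should reduce to the classical finitely presented statement before perfecting. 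Once the five axioms are verified, the corollary and the resulting six-functor formalism follow formally from the coefficient-system machinery alluded to in the abstract.
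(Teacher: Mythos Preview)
Your approach to smooth base change and the projection formula is fine and matches the paper's (Proposition~\ref{Prop:perfect-SH-functor}), and homotopy invariance can indeed be read off directly from the definition.

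For Thom stability and gluing, however, the paper takes a completely different route. Rather than rerunning Morel--Voevodsky's arguments in the perfect world, it constructs the adjunction $\Phi \dashv \Psi \colon \SH(S_0) \rightleftarrows \SHpf(S)$, proves that $\Psi$ is fully faithful (Proposition~\ref{Prop:Psi_ff}), and shows that $\Phi$ commutes with $i_*$ for closed immersions (Lemma~\ref{Lem:Phi_commutes_i}). Thom stability is then \emph{transferred} from $\SH$: since $\Sigma^{E_0} = (p_0)_\sharp (s_0)_*$ is an equivalence on $\SH(S_0)$ and the localization $\Phi$ intertwines it with $\Sigma^E$, the perfect Thom twist is an equivalence too (Lemma~\ref{Lem:thom_stabl_htpy_inv}). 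Gluing is proved by the same mechanism: given $E \in \SHpf(S)$ with $j^* E \simeq 0$, one applies classical localization to $\Psi_S E \in \SH(S_0)$ to produce $F \in \SH(Z_0)$ with $(i_0)_* F \simeq \Psi_S E$, and then $i_* \Phi_Z F \simeq \Phi_S (i_0)_* F \simeq E$. No perfect analogue of the deformation-to-the-normal-cone argument is ever carried out.

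Your direct strategy for gluing has a genuine gap. The gluing axiom concerns an arbitrary closed immersion $i \colon Z \to S$ of the \emph{base}, with no ambient smooth object in sight; your formulation ``$i \colon Z \hookrightarrow X$ in $\Sm^\pf_S$'' together with deformation to the normal bundle is a sketch of homotopy \emph{purity}, which is a different statement and not what is being asked. Even restricting to perfectly smooth closed pairs, the cotangent complex of any perfectly smooth morphism vanishes, so a normal bundle of $Z$ in $X$ is not intrinsically defined on the perfect side; one would have to choose finitely presented models, perform the classical construction, perfect, and prove independence of the model---exactly the bookkeeping the paper's transfer argument sidesteps entirely. Your Mayer--Vietoris reduction for Thom stability is plausible but likewise longer than the paper's one-line transfer.
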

In particular, by using Ayoub's thesis, the assignment $S_0\mapsto\SH^\pf(S)$ defines a six-functor formalism on the category $\Sch_{\Fp}$.\footnote{More precisely, in the context of our paper the functor $S_0 \mapsto \SH^\pf(S)$ is a \emph{coefficient system} in the sense of Definition~\cite[App.~A]{DHW1}. This is essentially the same notion as Ayoub's \emph{2-foncteur homotopique stable} \cite[Def.~1.4.1]{AyoubThesis} and yields a six-functor formalism, cf.\ \cite{AyoubThesis}, \cite{CisinskiDeglise}, \cite{liu-zheng}. For reasons of convenience, we often cite Khan's preprint \cite{KhanMorelVoevodsky} which is written in the language of $\infty$-categories. Consider \cite[Rem.~2.18, Rem.~2.21]{GallauerSix} for an overview and history of some of the different approaches to six-functor formalisms available in the literature.}

We relate the classical and the perfect theory via an adjunction (\S\ref{subsec:adjunctions})
\[ (\Phi \dashv \Psi) \colon \SH(S_0) \rightleftarrows \SH^\pf(S),  \]
where $\Phi$ turns out to be a localization (Proposition~\ref{Prop:Psi_ff}). This uses the the pcdh-topology (Definition~\ref{Def:pcdh-topology}), which is an analogue of the cdh-topology for perfect schemes, and the idh-topology (Definition~\ref{Def:idh-topology}---standing for \emph{inseparably decomposed h-topology}), which is a lift of the pcdh topology to schemes that are not necessarily perfect. This yields the category $\SH_\idh(S_0)$ which is an idh-local version of $\SH(S_0)$ (Definition~\ref{Def:SHidh}). We establish the following comparison result.

\begin{introthm}[Theorem~\ref{Thm:comparison-results-all}]
\label{Thm:IntroD}
The functor $\Phi$ induces equivalences 
\[ \SH(S_0)[1/p] \xrightarrow{\sim} \SH_\idh(S_0) \xrightarrow{\sim} \SH^\pf(S).\]
\end{introthm}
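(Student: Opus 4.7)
The strategy is to realize each of the two equivalences as an equality of Bousfield localizations of $\SH(S_0)$. By Proposition~\ref{Prop:Psi_ff}, the right adjoint $\Psi$ is fully faithful, so $\Phi$ presents $\SH^\pf(S)$ as an accessible localization of $\SH(S_0)$ at some class $W$ of morphisms; the goal is to identify $W$ as the class generated by idh-local equivalences together with multiplication-by-$p$ maps.

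For the equivalence $\SH_\idh(S_0) \xrightarrow{\sim} \SH^\pf(S)$, first verify that $\Phi$ inverts idh-local equivalences. This rests on the fact that perfection sends idh-covers in $\Sch_{\Fp}$ to pcdh-covers of perfect schemes (by the very design of Definition~\ref{Def:idh-topology}), together with pcdh-descent for $\SH^\pf$, itself an analogue of cdh-descent in classical motivic homotopy theory provable via Nisnevich descent plus excision for abstract blow-ups in the perfect setting. This yields a factorization $\SH(S_0) \to \SH_\idh(S_0) \to \SH^\pf(S)$. To show the factored functor is an equivalence, I would argue that the remaining equivalences inverted by $\Phi$---the identifications of $\A^1$ with $\barA$, of $\P^1$ with $\barP$, and of a scheme with its perfection---are themselves idh-trivial, since perfections are universal homeomorphisms and the idh topology collapses such maps.

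For the equivalence $\SH(S_0)[1/p] \xrightarrow{\sim} \SH_\idh(S_0)$, observe that multiplication by $p$ is already invertible in $\SH^\pf(S)$ (an announced result of the paper), which together with the previous step forces $p$ to be invertible in $\SH_\idh(S_0)$; hence $\Phi$ factors further through $\SH(S_0)[1/p]$, giving a comparison map $\SH(S_0)[1/p] \to \SH_\idh(S_0)$. Conversely, the idh topology is generated by Nisnevich covers (already inverted in $\SH$) together with universal-homeomorphism-type covers coming from inseparable decompositions; by the Elmanto--Khan invariance theorem cited in the abstract, $\SH(-)[1/p]$ is invariant under universal homeomorphisms, so $\SH(S_0)[1/p]$ is automatically idh-local. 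The two Bousfield localizations $\SH(S_0) \to \SH(S_0)[1/p]$ and $\SH(S_0) \to \SH_\idh(S_0)$ therefore satisfy each other's universal properties and are mutually inverse.

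The main obstacle is the reverse containment in the first equivalence: showing that the class $W$ of maps inverted by $\Phi$ is \emph{precisely} generated by idh-covers, rather than merely containing them. Identifying the $\Phi$-local objects with the idh-local objects (as opposed to a potentially larger class) will require a careful comparison of the two constructions, likely by reducing via descent to a convenient class of generators (e.g.\ affine $\Fp$-schemes) where the relationship between a scheme and its perfection can be controlled directly on rings, and exploiting the six-functor formalism from Theorem~\ref{introthm:SH_motivic_coefficient_systems} to propagate the comparison along smooth base change.
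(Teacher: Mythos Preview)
Your overall architecture matches the paper's: both arguments rest on (i) $p$-invertibility in $\SH^\pf(S)$, (ii) Elmanto--Khan invariance of $\SH[1/p]$ under universal homeomorphisms, and (iii) a comparison between idh- and pcdh-local conditions. Where you diverge is in the technical implementation of $\SH_\idh(S_0)\simeq\SH^\pf(S)$, and the gap you flag in your final paragraph is genuine.

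The paper does not try to identify the class $W$ of $\Phi$-local equivalences intrinsically inside $\SH(S_0)$. Instead, $\SH_\idh(S_0)$ is \emph{defined} as the essential image of $\L_\idh\circ\iota_!$ inside the larger $\Sch$-fibred category $\uSH_\idh(S_0)$ (Definition~\ref{Def:SHidh}), not a priori as a Bousfield localization of $\SH(S_0)$. The comparison then proceeds on the fibred level: Corollary~\ref{Cor:idh=pcdh} gives a topos equivalence $\Sh_\idh(\SchfpSn)\simeq\Sh_\pcdh(\PerfpfpS)$, which propagates to $\uSH_\idh(S_0)\simeq\uSH^\pf_\pcdh(S)$; one then matches the essential images of the two $\iota_!$'s, using that $\Phi$ is a localization (Proposition~\ref{Prop:Psi_ff}) and that $\iota_!$ on the perfect side already lands in pcdh-local objects (Lemma~\ref{Lem:iota_SHpcdh}). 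This completely sidesteps your ``main obstacle'': one never needs to characterize $W$ by generators.

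Two smaller points. First, your description of idh-generation is imprecise: idh is generated by Nisnevich covers together with \emph{inseparably decomposed proper families}, not merely universal homeomorphisms; the relevant bridge is Proposition~\ref{Prop:cdh_to_idh}, which says idh-sheaves are exactly cdh-sheaves inverting universal homeomorphisms. So establishing idh-locality of objects coming from $\SH(S_0)[1/p]$ requires Khan's cdh-descent result as well as Elmanto--Khan, and the paper's Proposition~\ref{Prop:SH=SHidh-away-from-p} makes this explicit via the formula $\Gamma(X,\iota_!F)\simeq\Hom(1_S,p_*p^*F)$. Second, the paper only directly proves $\SH(S_0)[1/p]\simeq\SH_\idh(S_0)[1/p]$; the removal of the $[1/p]$ on the right is obtained \emph{a posteriori} by combining with $\SH_\idh(S_0)\simeq\SH^\pf(S)$ and Proposition~\ref{Prop:SHpf-p-invertible}, so the logical flow is a bit more circuitous than your linear presentation suggests.
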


Recall that a theorem of Elmanto--Khan says that a universal homeomorphism $f\colon T_0\to S_0$ over $\F_p$ induces an equivalence $f^* \colon \SH(S_0)[1/p] \xrightarrow{\sim} \SH(T_0)[1/p]$ \cite[Cor.~2.1.5]{Elmanto-Khan-perfection}. Our results that $\SH_\idh(S_0) \simeq \SHpf(S)$ and $\SHpf(S) \simeq \SHpf(S)[1/p]$ say that also the converse is true: localizing $\SH$ at the universal homeomorphisms is the same as inverting multiplication by $p$. 

We also mention the study of Frobenius fixed points in motivic homotopy theory by Richarz--Scholbach in \cite{richarz-scholbach-frobenius}. They construct the category $\SH(S/\Frob) \coloneqq \SH(S)^{\Frob^*}$ consisting of pairs $(X,\lambda)$ where $X \in \SH(S)$ and $\lambda$ is an equivalence $X \simeq \Frob^*(X)$. This is not a subcategory of $\SH(S)$ though, since even for $\CCC^{\id^*}$ this fails \cite[Ex.~3.3]{richarz-scholbach-frobenius}. Nor is $p$ in general invertible in $\SH(S/\Frob)$ \cite[Lem.~5.10, Cor.~5.11]{richarz-scholbach-frobenius}. We do obtain a canonical twisting functor $\SH^\pf(S) \to \SH(S/\Frob)[1/p]$ by \cite[Cor.~4.8]{richarz-scholbach-frobenius}, using Theorem~\ref{Thm:IntroD}.

\subsection*{Future work}
The idh-topology makes perfect sense over any base, and coincides with the cdh-topology in characteristic zero.\footnote{In fact, we conjecture that most of the results from Sections~\ref{Sec:Perf}--\ref{Sec:idh} go through over any base after replacing perfect schemes with absolutely weakly normal schemes. The latter are obtained by inverting universal homeomorphisms of reduced schemes in the general case, thus coincide with perfect schemes in characteristic $p$ \cite[Prop.~14.3.2]{BarwickExodromy} \cite[Thm.~1.9]{CarlsonReconstructionschemesetaletopoi}.}
In the equivalence $\SHpf(S)\simeq \SH_\idh(S_0)$ for an $\F_p$-scheme $S_0$ with perfection $S$, the right-hand side thus makes sense for any scheme $S_0$ \emph{over $\Z$}. This comes with a left-adjoint functor $\Phi \colon \SH \to \SH_\idh$ which is invertible in characteristic zero and recovers $\Phi \colon \SH \to \SHpf$ in characteristic $p$.
We conjecture that $\SH_\idh$ induces a six functor formalism over any base.

Once having established this setup, we expect to generalize our duality theorems for modules over K-theory in characteristic zero \cite{DHW1}.
From this point of view, Theorem~A from loc.\ cit.\ and the analogous statement for $\KGL[1/p]$-modules from \cite[Thm.~2.4.9]{BondarkoDeglise2017} should both be incarnations of a general statement on duality for $\KGL$-modules in $\SH_\idh$. Placing these results on equal footing should allow us to combine them into a duality statement in certain mixed characteristic cases, e.g., over a discrete valuation ring.

\subsection*{Leitfaden}
In Section~\ref{Sec:Perf} we recollect some basic facts about perfect schemes and describe the category of perfect schemes as the localization of schemes by inverting universal homeomorphisms, also in the (perfectly) finitely presented case (Proposition~\ref{Prop:perfection_localization}).

In Section~\ref{Sec:pcdh} we introduce and study the $\pcdh$-topology (Definition~\ref{Def:pcdh-topology})---a perfect version of the $\cdh$-topology---and show that the associated sheaf topos is hypercomplete (Corollary~\ref{Cor:pcdh-topos-hypercomplete}). In contrast to the $\cdh$-topology, the $\pcdh$-topology is subcanonical (Corollary~\ref{Cor:pcdh-subcanonical}).

In Section~\ref{Sec:idh} we introduce and study the $\idh$-topology (Definition~\ref{Def:idh-topology}). This is a lift of the $\pcdh$-topology to schemes over $\Fp$ such that  $\idh$-sheaves are precisely those $\cdh$-sheaves that invert universal homeomorphisms (Proposition~\ref{Prop:cdh_to_idh}). The $\idh$-sheaf topos is equivalent to the $\pcdh$-sheaf topos (Corollary~\ref{Cor:idh=pcdh}) and the $\idh$-points are precisely the spectra of perfect Henselian valuation rings (Proposition~\ref{Prop:points-of-idh-topology}).

In Section~\ref{Sec:perfect-motivic-cat} we introduce the unstable (Definition~\ref{Def:perfect-H}) and stable (Definition~\ref{Def:perfect-SH}) perfect motivic homtopy theory. We continue the study in Section~\ref{Sec:perfect-motivic}
and show that the stable category is connected by an adjunction to the classical category (Proposition~\ref{Prop:Phi_comm_susp}). The associated right adjoint turns out to be fully faithful (Proposition~\ref{Prop:Psi_ff}). 

Our main results are the establishment of perfect motivic homotopy theory as a coefficient system (Theorem~A = Corollary~\ref{Cor:SH_motivic_coefficient_systems}) and the comparison results from Theorem~B in section~\ref{Sec:comparison}.
Finally, we study representability of connective algebraic K-theory in the unstable perfect motivic homotopy category (Proposition~\ref{Prop:K-connective-lies-in-Hpf}) and describe its representing object (Corollary~\ref{Cor:K=ZxBGL}).

\subsection*{Acknowledgements}
The authors cordially thank Denis-Charles Cisinski for giving helpful input, and for valuable discussions and constructive feedback.
We thank Tom Bachmann for pointing out to us that inverting the Frobenius already should invert $p$.
Moreover, the authors thank
Tess Bouis,
Fangzhou Jin,
Robin de Jong, 
Adeel Khan,
Niklas Kipp, 
Sebastian Wolf, and
Can Yaylali 
for helpful discussions related to this paper's content.

JH has been generously supported by the Knut and Alice Wallenberg Foundation, project number 2021.0287.
The project was jointly supported by the Deutsche Forschungsgemeinschaft (DFG).
Namely, JH is supported by the Collaborative Research Centre SFB 1085 \emph{Higher Invariants -- Interactions between Arithmetic Geometry and Global Analysis}, project number 224262486, and CD is supported by the Collaborative Research Centre TRR 326 \textit{Geometry and Arithmetic of Uniformized Structures (GAUS)}, project number 444845124. Moreover, JH and SW thank GAUS for financing a work stay in Heidelberg in August 2024.
CD thanks Elden Elmanto for hosting him at the University of Toronto in September 2024.

 \section*{Conventions}
\subsection*{Mapping objects}
Let $\VVV$ be a closed symmetric monoidal $\infty$-category. For a $\VVV$-enriched $\infty$-category $\CCC$, we write $\Hom_\CCC(X,Y)_\VVV \in \VVV$ for the mapping object in $\VVV$ of $\CCC$-morphisms $X \to Y$, where $X,Y \in \CCC$. For short, we write $\Hom_\CCC(X,Y) \coloneqq \Hom_\CCC(X,Y)_\SSS$ for the ordinary mapping space. If $\CCC$ is enriched over itself, we write $\Homu_\CCC(X,Y) \coloneqq \Hom_\CCC(X,Y)_{\CCC}$ for the inner hom. In the case $\VVV = \Sp$ is the $\infty$-category of spectra, we also write $\map(X,Y) \coloneqq \Hom_\CCC(X,Y)_{\Sp}$.

\subsection*{Assumptions}
For the entire article, let $p$ be a prime number.
Unless otherwise stated, we assume that all schemes (hence all morphisms) are quasi-compact and quasi-separated (qcqs).

\subsection*{Notation}
Let $S$ be a scheme, and $\Sch_S$ the category of schemes over $S$. 
\begin{enumerate}
    \item Write $\PrLo$ for the $\infty$-category of  presentably symmetric monoidal $\infty$-categories, with colimit-preserving, symmetric monoidal functors as morphisms. Write $\PrLotimesst \subset \PrLo$ for the full subcategory spanned by $\CCC \in \PrLo$ for which the underlying $\infty$-category is stable.
   \item Write $\Schfp_S \supset \Sm_S$ for the full subcategories of $\Sch_S$ spanned by finitely presented resp.\ smooth and finitely presented $S$-schemes.\footnote{Be aware that smooth morphisms are, in general, not finitely presented, only locally finitely presented. But redefining $\Sm_S$ with this extra assumption does no harm, since all our schemes are qcqs, cf.\ \cite[Prop.~C.5]{hoyois-quadratic}.}
    \item Write $\H_{\bullet}(S)$ for the motivic homotopy category of $\A^1$-invariant, pointed Nisnevich sheaves on $\Sm_S$ and write $\SH(S)$ for the stable motivic homotopy category, obtained by tensor-inverting the Tate sphere $\T \coloneqq (S^1,1)\otimes(\G_m,1) \in \H_\bullet(S)$. Recall that $\T \simeq (\P^1,\infty)$ in $\H_\bullet(S)$.
    \item Let $X \mapsto X_+$ be the the obvious functor $\Sm_S \to \H_{\bullet}(S)$ induced by adding a basepoint and localization.
    \item Put $\Sigma_{\T} \coloneqq (-) \wedge \T$ and $\Omega_{\T} \coloneqq \Homu_{\H_\bullet(S)}(\T,-)$. We get an adjunction
    \[ (\Sigma_{\T} \dashv \Omega_{\T}) \colon \H_\bullet(S) \rightleftarrows \H_\bullet(S). \]  
    \item $\Sigma_{\T} \dashv \Omega_{\T}$ induces an adjuntion
    \[ (\Sigma^\infty \dashv \Omega^\infty) \colon \H_\bullet(S) \rightleftarrows \SH(S). \]
\end{enumerate}

We write $h \colon \CCC \to \PPP(\CCC)$ for the Yoneda embedding into the $\infty$-category of presheaves on $\CCC$ (always space-valued unless otherwise stated).

Recall that a family $\{X_i \to Y\}_{i \in I}$ is \emph{completely decomposed} if for each $y \in Y$ there is some $i \in I$ and $x \in X_i$ over $y$ such that the induced field extension $\kappa(y) \subset \kappa(x)$ is an isomorphism. 

A \emph{Nisnevich square} is a Cartesian square
\begin{center}
        \begin{tikzcd}
            W \arrow[r] \arrow[d] & V \arrow[d, "p"] \\
            U \arrow[r, "j"] & X
        \end{tikzcd}
\end{center}
where $j$ is an open immersion and $p$ is \'{e}tale such that the canonical map $(X \setminus U)_\red \times_X V \to (X \setminus U)_\red$ is an isomorphism. The \emph{Nisnevich topology} on $\Sch$ is generated by the cd-structure of Nisnevich squares. Since all schemes have been assumed to be qcqs, the Nisnevich topology is generated by completely decomposed \'{e}tale covering families \cite[Appendix~A]{BachmannNorms}.
The \emph{cdp-topology} is generated by completely decomposed families of  proper maps. Also, the \emph{cdh-topology} is generated by the cdp-topology and the Nisnevich topology.\footnote{Without mention, we pass to the restriction of these topologies to (sub)categories of schemes over a base in the obvious way. Note we do not require covering maps for the cdp-topology to be finitely presented, hence this differs from \cite{ehik--milnor-excision}. Clearly, after restricting to $\SchfpS$ this difference disappears.}

\section{Perfect schemes}
\label{Sec:Perf}

Recall that a scheme $X$ over $\F_p$ is \emph{perfect} if the Frobenius endomorphism is an equivalence. Note that the inclusion of perfect schemes into the category of all schemes over $\F_p$ has a right adjoint. The induced endofunctor on the category of schemes over $\F_p$ is called the \emph{perfection}, written $X \mapsto X^{\pf}$. Concretely, one can calculate $X^{\pf}$ as the limit of the iterated Frobenius maps $\ldots\to X \to X \to X$.  We will use a few results from \cite{BhattProjectivity} throughout, which we record here for convenience. 

\begin{Lem}[{\cite[Lem.\ 3.4]{BhattProjectivity}}]
	\label{Lem:Bhatt.3.4}
Let $f:X \to Y$ be a morphism of (not necessarily qcqs) schemes over $\F_p$. Then the following properties hold for $f$ if and only if they hold for $f^\pf$: quasi-compact; quasi-separated; separated; universally closed. Moreover, if one of the following properties holds for $f$, then it also holds for $f^\pf$: \'{e}tale; a closed immersion; an open immersion.
\end{Lem}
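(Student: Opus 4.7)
The plan is to leverage two structural features of perfection. First, the counit $\varepsilon_X\colon X^\pf \to X$ is a universal homeomorphism (integral, radicial, surjective), realised concretely as the cofiltered affine limit $X^\pf = \lim(\cdots \to X \to X)$ along iterated absolute Frobenius; in particular $\varepsilon$ is natural in $X$, and the induced map $|X^\pf| \to |X|$ is a homeomorphism. Second, perfection is right adjoint to the inclusion of perfect schemes into $\Sch_{\F_p}$, hence preserves limits; since fibre products of perfect schemes are again perfect (Frobenius on a fibre product is the fibre product of the Frobenii), this gives $(X \times_Y Z)^\pf \simeq X^\pf \times_{Y^\pf} Z^\pf$, whence the diagonal $\Delta_{f^\pf}$ is the perfection of $\Delta_f$.

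For the iff-properties, I would observe that each of qc, qs, separated, and universally closed is a topological condition on $f$, on $\Delta_f$, or on $f \times_Y \id_{Y'}$ for arbitrary $Y' \to Y$. Since $\varepsilon$ is natural and perfection commutes with fibre products, each such base change fits into a commutative square with $f^\pf \times_{Y^\pf} \id_{(Y')^\pf}$ whose vertical sides are surjective universal homeomorphisms. Such a map both detects and preserves quasi-compactness, quasi-separatedness, and closedness of a morphism, since images and preimages of closed sets correspond bijectively under a homeomorphism; this transfers each of the four properties between $f$ and $f^\pf$ in both directions.

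For the one-sided implications, I would argue case by case by exhibiting $f^\pf$ as a base change of $f$ along $Y^\pf \to Y$. If $f$ is étale, the relative Frobenius $\Frob_{X/Y}$ is an isomorphism, so the Frobenius square on $X$ over $Y$ is Cartesian; passing to the limit over iterated Frobenius of $Y$ yields $X^\pf \simeq X \times_Y Y^\pf$, making $f^\pf$ a base change of the étale map $f$. For closed immersions, working affine-locally with $X = \Spec(A/I) \hookrightarrow \Spec(A) = Y$ gives $X^\pf = \Spec(A_\pf / I^{1/p^\infty})$, which is a closed subscheme of $\Spec(A_\pf) = Y^\pf$. For open immersions, since the Frobenius of $X$ restricts to the Frobenius of an open subscheme $U \subset X$, one has $U^\pf \simeq U \times_X X^\pf$, which is the open preimage of $U$ under $\varepsilon_X$.

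The most delicate step is the topological descent used in the reverse direction of the iff-properties: verifying that a surjective universal homeomorphism on both source and target suffices to transfer closedness (and quasi-compactness, etc.) of a morphism back from $f^\pf$ to $f$, stably under arbitrary base change. This reduces to a short diagram chase with images and preimages of closed sets, but organising the base changes compatibly with perfection is where I would invoke Bhatt's argument from \cite[Lem.~3.4]{BhattProjectivity} for the definitive statement.
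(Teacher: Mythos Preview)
The paper does not supply its own proof of this lemma: it is recorded verbatim from \cite[Lem.~3.4]{BhattProjectivity} as a black box, with no argument given. So there is nothing in the paper to compare your proposal against.

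That said, your sketch is essentially the standard argument and is correct in outline. The two structural inputs you isolate---that $\varepsilon_X\colon X^\pf\to X$ is a universal homeomorphism, and that perfection commutes with fibre products (so $\Delta_{f^\pf}=(\Delta_f)^\pf$ and base changes perfect compatibly)---are exactly what drives the proof. For the reverse direction of universally closed, the point you flag as delicate is handled by noting that for arbitrary $Z\to Y$ one has $(X\times_Y Z)^\pf \simeq X^\pf\times_{Y^\pf}Z^\pf$, and the square with vertical maps $\varepsilon$ transfers closedness of $X^\pf\times_{Y^\pf}Z^\pf\to Z^\pf$ to closedness of $X\times_Y Z\to Z$; no further input from Bhatt is really needed beyond what you already wrote down. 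Your treatment of the one-sided implications (\'etale via invertibility of relative Frobenius, closed and open immersions via base change along $\varepsilon$) is likewise the expected one.
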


\begin{Not}
    Throughout this paper and unless stated otherwise, $S_0$ is a scheme over $\Fp$ with underlying reduced scheme $S_1$ and perfection $S$.
\end{Not}

Write $\Perf_S$ for the category of perfect schemes over $S$.
Let a morphism $f \colon X \to S$ between perfect schemes be given. Following \cite[Prop.~3.11]{BhattProjectivity}, we say that $f$ is \emph{perfectly of finite presentation} if there is a finitely presented model $f_0 \colon X_0 \to S$. Write $\Perfpfp_S$ for the full subcategory of $\Perf_S$ spanned by the perfectly finitely presented schemes over $S$.

Taking limits of cofiltered inverse sytems of schemes with affine transition maps commutes with perfection \cite[Prop.~3.12]{BhattProjectivity}.\footnote{Recall, by assumption all schemes are assumed to be qcqs unless stated otherwise.} 

\begin{Lem}[{\cite[Lem.\ 3.8]{BhattProjectivity}}]
	\label{Lem:Bhatt.3.8}
	A morphism $f: X \to Y$ in $\Sch_{\F_p}$ is a universal homeomorphism if and only if $f^\pf$ is an isomorphism.
 \end{Lem}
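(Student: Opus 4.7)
The plan is to reduce to the affine setting and exploit the algebraic characterization of universal homeomorphisms in characteristic $p$. Since universal homeomorphisms are integral and hence affine, the question is local on $Y$ in the forward direction; in the backward direction, $f^\pf$ being an isomorphism already forces $f$ to be affine-like enough for the reduction (one can also check being a universal homeomorphism Zariski-locally on the target). So assume throughout that $f = \Spec(\varphi)$ for a map $\varphi \colon A \to B$ of $\F_p$-algebras.

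For $(\Rightarrow)$, recall the standard criterion: $\varphi$ is a universal homeomorphism iff it is integral, $\ker(\varphi)$ is a nil ideal, and for each $b \in B$ some power $b^{p^n}$ lies in $\varphi(A) + \Nil(B)$. The perfection $B^\pf$ is reduced, so the first condition forces $\varphi^\pf \colon A^\pf \to B^\pf$ to be injective. For surjectivity, observe that every element of $B^\pf$ admits iterated $p$-th roots; thus given $c \in B^\pf$, choose $N$ so that $c^{p^N}$ lifts to some $b \in B$, then find $a \in A$ with $b^{p^n} \equiv \varphi(a)$ modulo $\Nil(B)$, and extract a $p^{n+N}$-th root of $a$ inside $A^\pf$ whose image is $c$. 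This shows $\varphi^\pf$ is an isomorphism.

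For $(\Leftarrow)$, the key input is that the canonical unit $\eta_X \colon X \to X^\pf$ is itself a universal homeomorphism. Indeed, $X^\pf$ is the cofiltered limit of the Frobenius tower $\cdots \xrightarrow{\Frob} X \xrightarrow{\Frob} X$ along affine transition maps; each Frobenius is a universal homeomorphism, and this class is preserved under cofiltered limits with affine transitions (since integrality, surjectivity, and universal injectivity all pass to such limits). Naturality of $\eta$ gives $\eta_Y \circ f = f^\pf \circ \eta_X$, and the right-hand side is a composition of an isomorphism with a universal homeomorphism, hence a universal homeomorphism. A standard two-out-of-three argument for universal homeomorphisms applied to $X \xrightarrow{f} Y \xrightarrow{\eta_Y} Y^\pf$—using that $\eta_Y$ is universally bijective, integral, and universally closed, and that these properties can be pulled back from the composite through the right factor—then yields that $f$ is a universal homeomorphism.

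\textbf{Main obstacle.} The trickier direction is $(\Rightarrow)$ in the non-reduced case, where the interaction between the nil kernel, the integrality of $\varphi$, and the $p$-power divisibility condition modulo $\Nil(B)$ must be tracked simultaneously through the colimit defining perfection. Once one is willing to invoke the standard Stacks-type characterization of universal homeomorphisms of affine $\F_p$-schemes, the computation becomes transparent; the backward direction is then essentially formal from the observation that $\eta_X$ is itself a universal homeomorphism.
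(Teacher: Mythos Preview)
The paper does not give its own proof of this lemma; it is quoted directly from \cite[Lem.~3.8]{BhattProjectivity}. Your argument is essentially correct and follows the standard approach, with one correction worth making: the natural map runs $X^\pf \to X$ (the counit of the right adjoint ``perfection''), not $X \to X^\pf$ as you write---there is no unit map $X \to X^\pf$ in general. With $\varepsilon_X \colon X^\pf \to X$ in place of your $\eta_X$, naturality gives $f \circ \varepsilon_X = \varepsilon_Y \circ f^\pf$, and your two-out-of-three argument goes through unchanged: the right-hand side is a universal homeomorphism, $\varepsilon_X$ is a universal homeomorphism, and if $gh$ and $h$ are universal homeomorphisms then so is $g$ (after any base change $h$ becomes a homeomorphism of spaces, so $g = (gh)\circ h^{-1}$ is too). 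In the forward direction your ring-level computation is fine; by ``the first condition'' you presumably mean the nil-kernel condition rather than integrality, since that is what forces $\varphi^\pf$ to be injective (the kernel dies in the reduced ring $A^\pf$, and filtered colimits preserve injections).
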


\subsection{Passing to reduced schemes} 
Write $\rSchFp$ for the full subcategory of $\SchFp$ spanned by reduced schemes.\footnote{The reason for passing to reduced schemes is that then universal homeomorphisms allow a calculus of fractions also in the `reducedly finitely presented' case, see Lemma~\ref{Lem:uh_is_limit_fpuh}. This is an essential ingredient in showing that the perfection functor is a localization also in the finitely presented case, see Proposition~\ref{Prop:perfection_localization}.} Recall that the inclusion $\rSchFp \subset \SchFp$ admits a right adjoint which sends $X \in \SchFp$ to the underlying reduced scheme $X_\red$. Since the counit $X_\red \to X$ is a nilimmersion, the perfection functor $(-)^\pf \colon \SchFp \to \PerfFp$ factors through the reduction functor $(-)_\red \colon \SchFp \to \rSchFp$. The obvious relative version is a factorization
\[ \SchSn \xrightarrow{(-)_\red} \rSchSo \xrightarrow{(-)^{\pf}} \PerfS \]
of the perfection $(-)^{\pf} \colon \SchSn \to \PerfS$.

\begin{Lem}
\label{Lem:rSch_cofiltered_lims}
    The full subcategory $\rSchSo \subset \SchSo$ is closed under limits of cofiltered inverse systems of reduced schemes with affine transition maps.
\end{Lem}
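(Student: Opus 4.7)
The plan is to reduce the claim to an affine/ring-theoretic statement and then invoke the elementary fact that a filtered colimit of reduced rings is reduced.

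Given a cofiltered inverse system $\{X_i\}_{i \in I}$ in $\rSchSo$ with affine transition maps, the limit $X = \lim_i X_i$ exists in $\SchSo$ by the standard theory of cofiltered limits of qcqs schemes with affine transitions (\citestacks{01YX}). Since reducedness is a local property, my first move would be to cover $X$ by affine opens of a particularly convenient form. Fixing an index $i_0 \in I$ and an affine open $U_{i_0} \subset X_{i_0}$, the preimages $U_i \subset X_i$ for $i \ge i_0$ are all affine (affineness is stable under affine base change), and $U \coloneqq \lim_{i \ge i_0} U_i$ is then an affine open subscheme of $X$. Letting $U_{i_0}$ range over an affine open cover of $X_{i_0}$ produces an affine open cover of $X$ by schemes of this form, so I may assume throughout that $X_i = \Spec A_i$.

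In this setting $X = \Spec A$ with $A = \colim_i A_i$ a filtered colimit in $\CRing$, and the lemma reduces to the assertion that a filtered colimit of reduced rings is reduced. This is entirely routine: any $f \in A$ satisfying $f^n = 0$ lifts to some $f_j \in A_j$, the relation $f_j^n = 0$ must already hold in some $A_k$ for $k \ge j$ by the definition of filtered colimits, and reducedness of $A_k$ then forces $f_k = 0$, whence $f = 0$. There is no real obstacle here; the only substantive input beyond elementary ring theory is the existence and affine-local character of cofiltered limits of qcqs schemes with affine transitions.
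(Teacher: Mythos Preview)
Your proof is correct and follows essentially the same approach as the paper: reduce to the affine case using that reducedness is local and that cofiltered limits with affine transitions are computed affine-locally, then observe that a filtered colimit of reduced rings is reduced. The paper's write-up is terser but the underlying argument is identical.
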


\begin{proof}
    Let $W = \lim W_\alpha$ for $\{W_\alpha\}_{\alpha \in A}$ a cofiltered inverse system of reduced schemes with affine transition maps, where the limit is taken in $\SchSo$. We claim that $W$ is reduced. Since this is a local property, we may assume without loss of generality that $W$, hence all $W_\alpha$, are affine \cite[\href{https://stacks.math.columbia.edu/tag/01Z6}{Tag 01Z6}]{stacks-project}. For any $f \in \oO_W$ there is some $f_\alpha \in \oO_{W_\alpha}, \alpha \in A$ such that $f$ is the image of $f_\alpha$ under the transition map $\oO_{W_\alpha} \to \oO_W$. Moreover, $f$ is nilpotent if and only if $f_\alpha$ is nilpotent for big enough $\alpha$. The claim follows.
\end{proof}

\begin{Def}
    A morphism $f \colon X \to Y$ in $\rSchFp$ is \emph{reducedly finitely presented} if for any cofiltered system $\{W_\alpha\}_{\alpha \in A}$ in $\rSch_Y$ with affine transition maps and limit $W$, the natural map
    \[ \colim \Hom_{\rSch_Y}(W_\alpha ,X) \to \Hom_{\rSch_Y}(W,X) \]
    is an equivalence. Write $\rSchrfpSo$ for the category of reducedly finitely presented schemes over $S_1$.
\end{Def}

\begin{Rem}  
\label{Rem:fp_rfp_pfp}
    By adjunction and definition, it is clear that
    $ (-)_\red \colon \SchFp \to \rSchFp$
    sends finitely presented morphisms to reducedly finitely presented morphisms. Likewise, the functor
    $(-)^{\pf} \colon \rSchFp \to \PerfFp$
    sends reducedly finitely presented morphism to perfectly finitely presented morphisms.
\end{Rem}

\begin{Exm}
    Not every reducedly finitely presented morphism is finitely presented. Indeed, define
    \[ A \coloneqq \frac{\Fp[x_i \mid i \in \N]}{(x_{i+1}^2 - x_i)_{i \geq 0}},\]
    and put $B \coloneqq A/(x_0)$. Then $A$ is reduced, the immersion $\Spec 
    B\to \Spec A$ is finitely presented, and thus $\Spec B_\red\to \Spec A$ is reducedly finitely presented. But $B_\red \cong \F_p$, so $\Spec B_\red\to \Spec A$ is not finitely presented.
\end{Exm}

\begin{Rem}
\label{Rem:rfp_local}
It follows straightforwardly from the definition that being reducedly finitely presented is stable under base change and composition, and that it satisfies cancellation in the sense that morphisms in $\rSchrfpSo$ are reducedly finitely presented. As in the ordinary case, one shows that it is also Zariski-local on the target. 
\end{Rem}

\begin{Prop}
\label{Prop:rfp_factorization}
    Let $f \colon X \to Y$ in $\rSchFp$ be given. Then $f$ is reducedly finitely presented if and only if Zariski-locally on $Y$ there is a factorization 
    \[ X \xrightarrow{g} X_0 \xrightarrow{f'} Y  \]
     of $f$ such that $g$ is a nilimmersion and $f'$ is finitely presented.
\end{Prop}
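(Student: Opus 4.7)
Plan. The ``if'' direction is the routine one: by Remark~\ref{Rem:rfp_local} being reducedly finitely presented is Zariski-local on $Y$, so I may assume the factorization $X \xrightarrow{g} X_0 \xrightarrow{f'} Y$ exists globally. Since $g$ is a nilimmersion and $X$ is reduced, $X = (X_0)_\red$, so every $Y$-morphism from a reduced scheme into $X_0$ factors uniquely through $X$. Combining this identification $\Hom_{\rSch_Y}(-, X) = \Hom_{\Sch_Y}(-, X_0)$ on reduced test schemes with Lemma~\ref{Lem:rSch_cofiltered_lims} and the standard limit characterization of the finitely presented morphism $f'$, one concludes that $f$ is reducedly finitely presented.

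For the ``only if'' direction I first treat the affine case $Y = \Spec R$, $X = \Spec A$. Applying the reduced-fp property to the filtered system of finitely generated reduced $R$-subalgebras of $A$ (whose colimit is $A$), the identity $\id_A$ factors through some $A_{\lambda_0} \hookrightarrow A$; since this inclusion is already injective, $A_{\lambda_0} = A$, so $A$ is finitely generated. Present $A = R[x_1, \ldots, x_n]/I$ with $I$ radical. Then $A$ is also the colimit in reduced $R$-algebras of the system $R[x]/\sqrt{J}$ as $J$ ranges over finitely generated subideals of $I$, since $\bigcup_J \sqrt{J} = I$. Applying reduced-fp to $\id_A$ yields a finitely generated $J_0 \subseteq I$ and an $R$-algebra section $s \colon A \to R[x]/\sqrt{J_0}$ of the canonical surjection. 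Writing $s(x_i) = x_i + q_i$ with $q_i \in I$ and enlarging $J_0$ to the still finitely generated ideal $J_1 := J_0 + (q_1, \ldots, q_n) \subseteq I$, the composed section $A \to R[x]/\sqrt{J_1}$ becomes the obvious map $x_i \mapsto x_i$, whose well-definedness forces $I \subseteq \sqrt{J_1}$. Together with $J_1 \subseteq I$ this gives $\sqrt{J_1} = I$, so $X_0 := \Spec R[x]/J_1$ is finitely presented over $Y$ with $(X_0)_\red = X$.

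For the non-affine case, reduce to $Y$ affine and cover $X$ by finitely many affine opens $U_1, \ldots, U_N$. By cancellation (Remark~\ref{Rem:rfp_local}) each $U_i \to Y$ is reducedly finitely presented, so the affine case produces affine finitely presented thickenings $U_i \to V_i \to Y$. The main obstacle is to glue the $V_i$'s into a global finitely presented $Y$-scheme $X_0$ with $(X_0)_\red = X$: on a double intersection $U_i \cap U_j$ one obtains two a priori distinct finitely presented thickenings coming from restricting $V_i$ and $V_j$, and one needs coherent isomorphisms between them satisfying the cocycle condition on triple intersections. One route is to also apply the affine argument on the double and triple intersections and to enlarge the ideals $J_1$ defining each $V_i$ within its filtered system until the sections produced by reduced-fp become compatible. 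Alternatively, exploit a global Thomason--Trobaugh-style presentation $X = \lim_\lambda X_\lambda$ with $X_\lambda$ finitely presented over $Y$ and affine transitions, use reduced-fp to factor $\id_X$ through some $(X_{\lambda_0})_\red$, and mimic the enlargement trick from the affine step to find $\lambda_1 \geq \lambda_0$ with $(X_{\lambda_1})_\red = X$.
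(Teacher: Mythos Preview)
Your ``if'' direction and the affine ``only if'' case are correct and complete; in fact you have given considerably more detail than the paper, which simply observes that the strategy of \cite[Prop.~3.11]{BhattProjectivity} adapts verbatim once one has Remark~\ref{Rem:rfp_local}. Your argument in the affine case (finitely generated subalgebras, then the system $R[x]/\sqrt{J}$, then the enlargement $J_0 \rightsquigarrow J_1$) is exactly the standard one.

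The genuine gap is in the non-affine step. Your phrase ``mimic the enlargement trick from the affine step to find $\lambda_1 \geq \lambda_0$ with $(X_{\lambda_1})_\red = X$'' hides the key difficulty: in a general Thomason--Trobaugh presentation the projections $X \to X_\lambda$ are only affine, not closed immersions, so from the retraction $r \colon (X_{\lambda_0})_\red \to X$ you only obtain (via rfp of $(X_{\lambda_0})_\red$) an index $\lambda_1$ with $t_{\lambda_1,\lambda_0} = p_{\lambda_0} \circ r \circ t_{\lambda_1,\lambda_0}$; this does \emph{not} force $p_{\lambda_1} \circ r_{\lambda_1} = \id$ unless $t_{\lambda_1,\lambda_0}$ is a monomorphism. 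The fix is to bootstrap: first apply your affine case to the members of a finite affine cover of $X$ (using cancellation from Remark~\ref{Rem:rfp_local}) to deduce that $X \to Y$ is of finite type; then choose the approximation $X = \lim_\lambda X_\lambda$ with each $X \hookrightarrow X_\lambda$ a \emph{closed immersion} (possible for finite-type qcqs morphisms, e.g.\ \citestacks{09ZR}). Now the transition maps are monic, and the identity $t_{\lambda_1,\lambda_0} \circ (p_{\lambda_1} \circ r_{\lambda_1}) = t_{\lambda_1,\lambda_0}$ forces $p_{\lambda_1} \circ r_{\lambda_1} = \id$, giving $(X_{\lambda_1})_\red \cong X$ as desired. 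Your gluing alternative would also eventually work but is more laborious.
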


\begin{proof}
    Using Remark \ref{Rem:rfp_local}, the same strategy as the proof of \cite[Prop.~3.11]{BhattProjectivity} goes through.
\end{proof}

Since closed immersions are of finite type, we obtain:

\begin{Cor}
\label{Cor:rfp_ft}
    Reducedly finitely presented maps are of finite type.
\end{Cor}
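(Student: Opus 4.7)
The strategy is to invoke Proposition~\ref{Prop:rfp_factorization} and then argue that each piece of the given factorization is of finite type. Explicitly, given a reducedly finitely presented morphism $f\colon X\to Y$ in $\rSchFp$, I would use that being of finite type is Zariski-local on the target to reduce to the situation where $f$ factors as $X \xrightarrow{g} X_0 \xrightarrow{f'} Y$ with $g$ a nilimmersion and $f'$ finitely presented. A nilimmersion is a closed immersion, hence a (topologically trivial) closed immersion is of finite type; $f'$ is of finite type by assumption. Since finite type morphisms are closed under composition, $f = f'\circ g$ is of finite type locally on $Y$, and hence of finite type.

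The only subtlety worth double-checking is that the Zariski-local factorization provided by Proposition~\ref{Prop:rfp_factorization} is enough. This is fine because \emph{finite type} is Zariski-local on the target (one may cover $Y$ by affine opens over which the factorization exists, and pull back to affine covers of $X$ to witness the finite type condition). There is no local-on-source issue here since we are not claiming anything stronger than finite type for $f$.

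I expect no significant obstacle: the lemma is essentially a one-line consequence of the factorization result, with the only conceptual content being the observation that nilimmersions are in particular closed immersions. No delicate point about the distinction between \emph{reducedly finitely presented} and \emph{finitely presented} enters the argument, since we only need the weaker conclusion of finite type.
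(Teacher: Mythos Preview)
Your proposal is correct and matches the paper's own argument essentially verbatim: the paper deduces the corollary directly from Proposition~\ref{Prop:rfp_factorization} together with the fact that closed immersions are of finite type, which is exactly your reasoning (with the Zariski-local reduction made explicit).
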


Since the nilradical of a Noetherian scheme is of finite type, Proposition \ref{Prop:rfp_factorization} yields the following:

\begin{Cor}
\label{Cor:rpf_is_lfp_Noetherian}
    If $S_1$ is Noetherian then $X_1 \to S_1$ with $X_1$ reduced is finitely presented if and only if it is reducedly finitely presented.
    
    Hence, in this case, the reduction functor
    \[ (-)_\red \colon \SchfpSn \to \rSchrfpSo \simeq \rSch^{\fp}_{S_1} \]
    is a coreflective localization.
\end{Cor}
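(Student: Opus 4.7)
The plan is to reduce everything to the factorization result of Proposition~\ref{Prop:rfp_factorization} and then exploit Noetherianness of $S_1$ to upgrade the nilimmersion appearing there to a finitely presented one.

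\textbf{First statement.} The implication ``finitely presented $\Rightarrow$ reducedly finitely presented'' is Remark~\ref{Rem:fp_rfp_pfp} and needs no Noetherian hypothesis. For the converse, suppose $f\colon X_1\to S_1$ is reducedly finitely presented with $X_1$ reduced and $S_1$ Noetherian. Since being finitely presented is Zariski-local on the target (Remark~\ref{Rem:rfp_local}-style argument, standard), we may, using Proposition~\ref{Prop:rfp_factorization}, factor $f$ locally on $S_1$ as
\[
X_1 \xrightarrow{g} X_0 \xrightarrow{f'} S_1
\]
with $g$ a nilimmersion and $f'$ finitely presented. Because $S_1$ is Noetherian and $f'$ is finitely presented, $X_0$ is also Noetherian. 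Since $g$ is a nilimmersion and $X_1$ is reduced, $g$ identifies $X_1$ with $(X_0)_\red$, i.e.\ $g$ is the closed immersion cut out by the nilradical $\mathscr{N}_{X_0}\subset\oO_{X_0}$. On a Noetherian scheme the nilradical is a coherent (in particular finite-type) ideal, so $g$ is a finitely presented closed immersion. Composing, $f=f'\circ g$ is finitely presented, and these local descriptions glue.

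\textbf{Second statement.} The fully faithful inclusion $\rSch^{\fp}_{S_1}\hookrightarrow\SchfpSn$ is left adjoint to the reduction functor once we verify that reduction lands in $\rSch^{\fp}_{S_1}$. Given $X\in\SchfpSn$, the scheme $X$ is Noetherian, so by the same nilradical argument the canonical closed immersion $X_\red\to X$ is finitely presented; composing with $X\to S_1$ shows $X_\red\in\rSch^{\fp}_{S_1}$. The counit $X_\red\to X$ of the global adjunction (inclusion $\rSch_{\Fp}\subset\Sch_{\Fp}$ versus $(-)_\red$) then restricts to provide the counit of the adjunction
\[
(\text{incl.}\ \dashv\ (-)_\red)\colon \rSch^{\fp}_{S_1}\rightleftarrows\SchfpSn,
\]
which exhibits $\rSch^{\fp}_{S_1}$ as a coreflective subcategory. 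The localization aspect is automatic: the inclusion is fully faithful, so $(-)_\red$ is the Dwyer--Kan localization at the class of morphisms it sends to isomorphisms, namely the (finitely presented) nilimmersions. Finally, the identification $\rSchrfpSo\simeq\rSch^{\fp}_{S_1}$ is exactly the first statement applied objectwise.

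\textbf{Main obstacle.} No real obstacle beyond bookkeeping; the only genuinely nontrivial input is that the nilradical of a Noetherian scheme is finitely generated, which lets one promote the nilimmersion in Proposition~\ref{Prop:rfp_factorization} from merely a closed immersion to a finitely presented closed immersion. The Zariski-local-to-global step for finite presentation and the verification that the counit lives in the right subcategory are routine.
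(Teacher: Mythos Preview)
Your proof of the first statement is correct and is precisely the paper's argument, only unpacked: the paper's entire proof is the sentence ``Since the nilradical of a Noetherian scheme is of finite type, Proposition~\ref{Prop:rfp_factorization} yields the following.''

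For the second statement there is a genuine gap. You claim that $X\in\SchfpSn$ is Noetherian, and that the inclusion $\rSch^{\fp}_{S_1}\hookrightarrow\SchfpSn$ is well-defined; both of these require $S_0$---not merely $S_1$---to be Noetherian. Concretely, take $S_0=\Spec\bigl(\Fp[x_1,x_2,\dots]/(x_ix_j)_{i,j}\bigr)$: then $S_1=\Spec\Fp$ is Noetherian but $S_0$ is not, and the nilimmersion $S_1\to S_0$ is not finitely presented, so $S_1$ itself does not lie in $\SchfpSn$ and the candidate left adjoint cannot land there. (You can still show that $(-)_\red$ lands in $\rSch^{\fp}_{S_1}$ using only that $S_1$ is Noetherian, by first base-changing $X$ to $S_1$---which yields a Noetherian scheme---and then reducing; but the existence of the fully faithful left adjoint genuinely needs $S_1\to S_0$ to be finitely presented.) In every place the paper actually invokes this corollary, namely Proposition~\ref{Prop:Noeth_cdp_cdh} and the Noetherian-approximation arguments via Remark~\ref{Rem:Noetherian_induction}, the scheme $S_0$ is itself Noetherian, so the issue is harmless in practice; but your argument as written does not go through under the literally stated hypothesis.
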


The reduction functor $(-)_\red \colon \SchfpSn \to \rSchrfpSo$ preserves Nisnevich and cdp-covering families, hence cdh-covering families. For $\tau \in \{\Nis,\cdp,\cdh\}$ we obtain an adjunction
\[ (\alpha_\tau \dashv \beta_\tau) \colon \Sh_\tau(\SchfpSn) \rightleftarrows \Sh_\tau(\rSchrfpSo), \]
where $\beta_\tau$ is precomposition with $(-)_\red$.

\begin{Prop}
\label{Prop:Noeth_cdp_cdh}
If $S_0$ is Noetherian, then $\alpha_\tau \dashv \beta_\tau$ is an adjoint equivalence for $\tau \in\{\cdp,\cdh\}$.
\end{Prop}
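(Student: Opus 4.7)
The plan is to exhibit $\beta_\tau$ as an equivalence by showing directly that it is fully faithful (in general) and, under the Noetherian hypothesis for $\tau\in\{\cdp,\cdh\}$, essentially surjective; then $\alpha_\tau$ is automatically an equivalence as a left adjoint to one.

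First I would introduce the inclusion $\iota\colon\rSchrfpSo\hookrightarrow\SchfpSn$ of reduced schemes, which is a well-defined functor under the Noetherian hypothesis by Corollary~\ref{Cor:rpf_is_lfp_Noetherian} and is continuous for each $\tau\in\{\Nis,\cdp,\cdh\}$ since the covers of a reduced scheme coincide in both categories. Precomposition therefore yields $\iota^*\colon\Sh_\tau(\SchfpSn)\to\Sh_\tau(\rSchrfpSo)$. Because $r\circ\iota=\id$, one has $\iota^*\beta_\tau\simeq\id$, and a direct naturality argument using the closed immersions $X_\red\to X$ shows that for every pair of sheaves $G,G'$ the restriction map
\[\Hom_{\Sh_\tau(\SchfpSn)}(\beta_\tau G,\beta_\tau G')\longrightarrow\Hom_{\Sh_\tau(\rSchrfpSo)}(G,G')\]
is an equivalence. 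This gives full faithfulness of $\beta_\tau$ for all three topologies, with no Noetherian hypothesis required.

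For essential surjectivity when $\tau\in\{\cdp,\cdh\}$, given $F\in\Sh_\tau(\SchfpSn)$ I would show that the natural morphism $F\to\beta_\tau\iota^*F$, which at $X$ is the map $F(X)\to F(X_\red)$ induced by the closed immersion $X_\red\to X$, is an equivalence. The key geometric input is that $\{X_\red\to X\}$ is a $\cdp$-cover in $\SchfpSn$: it is a completely decomposed family of proper maps, being a universal homeomorphism identifying residue fields, and—crucially—it is finitely presented because the Noetherian hypothesis forces the nilradical of the (automatically Noetherian) scheme $X$ to be finitely generated. Because $X_\red\hookrightarrow X$ is moreover a monomorphism, $X_\red\times_X X_\red=X_\red$ and the \v{C}ech nerve of the cover is constant on $X_\red$; $\cdp$-descent for $F$ therefore collapses to $F(X)\simeq F(X_\red)$. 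Every $\cdh$-sheaf is in particular a $\cdp$-sheaf, so the same argument handles both topologies.

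The main—and really only—technical obstacle is the finite presentation of $X_\red\to X$: without the Noetherian hypothesis this nilimmersion need not lie in $\SchfpSn$ and so need not furnish a cover in our topology. Once this point is settled, the remainder is formal, and combining full faithfulness with essential surjectivity yields the claimed adjoint equivalence.
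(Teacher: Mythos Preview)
Your argument is correct and follows essentially the same route as the paper's: both establish full faithfulness of $\beta_\tau$ from the fact that $(-)_\red \colon \SchfpSn \to \rSchrfpSo$ is a coreflective localization in the Noetherian case (you phrase this via the section $\iota$ and the identity $r\circ\iota=\id$, the paper invokes precomposition with a localization directly), and both obtain essential surjectivity from the observation that $X_\red \to X$ is a monic $\tau$-cover for $\tau\in\{\cdp,\cdh\}$, so that $F(X)\simeq F(X_\red)$ and hence $F\simeq\beta_\tau(\iota^*F)$.

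One small inconsistency worth flagging: your full-faithfulness argument uses the inclusion $\iota\colon\rSchrfpSo\hookrightarrow\SchfpSn$ and the closed immersions $X_\red\to X$ \emph{inside} $\SchfpSn$, both of which you correctly note require the Noetherian hypothesis via Corollary~\ref{Cor:rpf_is_lfp_Noetherian}; so the parenthetical claim that full faithfulness holds ``with no Noetherian hypothesis required'' is not supported by the argument as written. This does not affect the proof of the proposition itself, which is stated under that hypothesis.
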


\begin{proof}
    By Corollary \ref{Cor:rpf_is_lfp_Noetherian} the reduction functor $(-)_\red$ is a coreflective localization. Hence, for $F \in \Sh_\tau(\SchfpSn)$ it holds that $F((-)_\red) \in \Sh_\tau(\rSchrfpSo)$. For any $X \in \SchfpSn$ it holds that $X_\red \to X$ is a monic $\tau$-cover in $\SchfpSn$, hence that $F(X) \to F(X_\red)$ is an equivalence. It follows that $\beta_\tau(F((-)_\red)) \simeq F$, hence that $\beta_\tau$ is essentially surjective.

    Again by Corollary \ref{Cor:rpf_is_lfp_Noetherian}, $\beta_\tau$ is given by precomposition with a localization, hence is fully faithful.
\end{proof}

\subsection{Perfection as localization}
Lemma \ref{Lem:Bhatt.3.8} suggests that taking perfections should be the same as localizing at universal homeomorphisms. In this subsection we realize this idea relative to the base $S \to S_1$, also after restricting to reducedly (resp.\ perfectly) finitely presented schemes.
\begin{Prop}
\label{Prop:perfection_right_Bousfield}
    The functors
    \[ \SchSn \xrightarrow{(-)_\red} \rSchSo \xrightarrow{(-)^{\pf}} \PerfS \]
    are coreflective localizations.
\end{Prop}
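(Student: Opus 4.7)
The plan is to deduce both claims from the following standard categorical fact: whenever $i \colon \DDD \hookrightarrow \CCC$ is a fully faithful functor with a right adjoint $R$ (i.e.\ $\DDD$ is a coreflective full subcategory), $R$ automatically exhibits $\DDD$ as the localization of $\CCC$ at the class $W_R$ of morphisms inverted by $R$. The argument is purely formal: $R \circ i \simeq \id_\DDD$ combined with the triangle identities forces $R(\epsilon)$ to be a natural isomorphism, where $\epsilon \colon i R \to \id_\CCC$ is the counit; hence for any $F \colon \CCC \to \EEE$ inverting $W_R$, the transformation $F(\epsilon) \colon F \circ i \circ R \to F$ is an equivalence, yielding the factorization $F \simeq (F \circ i) \circ R$, and uniqueness follows again from $R \circ i \simeq \id_\DDD$.

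For $(-)_\red \colon \SchSn \to \rSchSo$, I would observe that a reduced $S_0$-scheme factors uniquely through the closed immersion $S_1 \hookrightarrow S_0$, so that $\rSchSo$ is identified with the full subcategory of $\SchSn$ spanned by reduced schemes. The inclusion clearly has $(-)_\red$ as its right adjoint, so the general fact gives the coreflective localization; the inverted class consists of the morphisms $f$ with $f_\red$ an isomorphism.

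For $(-)^\pf \colon \rSchSo \to \PerfS$, the main point is to establish the adjunction in this relative setting. The inclusion $\PerfS \hookrightarrow \rSchSo$ is well-defined by composing the $S$-structure with $S \to S_1$ (perfect schemes being reduced). I would verify full faithfulness by establishing the bijection $\Hom(X, S) \simeq \Hom(X, S_1)$ for $X \in \PerfS$; this follows by chaining the universal property of perfection ($\Hom(X, S) \simeq \Hom(X, S_0)$ because $S = S_0^\pf$ and $X$ is perfect) with the fact that any map from a reduced scheme to $S_0$ factors uniquely through $S_1$ (so $\Hom(X, S_0) = \Hom(X, S_1)$). The right adjoint is then the perfection functor, again by the absolute universal property of perfection applied fibrewise over the tower $S \to S_1$. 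By Lemma~\ref{Lem:Bhatt.3.8}, the class of inverted morphisms is exactly that of universal homeomorphisms.

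The categorical core is entirely formal; the only genuine obstacle is bookkeeping the tower $S \to S_1 \to S_0$ carefully enough in the second step to verify that the relative adjunction assembles from the absolute one.
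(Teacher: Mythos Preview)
Your proposal is correct and follows essentially the same approach as the paper. The paper's proof is a one-liner (``immediate from the adjunctions; the left adjoints are given by postcomposition with $S \to S_1 \to S_0$''), and you have simply unpacked this: you spell out the general fact that a coreflective inclusion yields a localization, and you verify explicitly the full faithfulness of the relative inclusions $\rSchSo \hookrightarrow \SchSn$ and $\PerfS \hookrightarrow \rSchSo$ that the paper takes for granted.
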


\begin{proof}
    This is immediate from the adjunctions. The left adjoints are given by postcomposition with $S \to S_1 \to S_0$.
\end{proof}

For $\CCC \subset \Sch_B$ a category of schemes over a given base $B$, write $\CCC[\UH^{-1}]$ for the  localization of $\CCC$ obtained by formally inverting the universal homeomorphisms in $\CCC$. 

\begin{Cor}
The perfection functor $(-)^{\pf} \colon \Sch_{S_0} \to \Perf_S$ is canonically equivalent to the localization $\Sch_{S_0} \to \Sch_{S_0}[\UH\inv]$. Likewise for $(-)^{\pf} \colon \rSch_{S_1} \to \Perf_S$
\end{Cor}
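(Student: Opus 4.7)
The plan is to combine Proposition~\ref{Prop:perfection_right_Bousfield} with Lemma~\ref{Lem:Bhatt.3.8}. The first tells us that $(-)^\pf$ is a coreflective localization in each of the two cases under consideration; the second identifies the class of morphisms that this functor inverts as exactly the universal homeomorphisms. One then invokes the standard categorical fact that a coreflective localization is canonically equivalent to the (Gabriel--Zisman) localization at the class of morphisms it sends to isomorphisms.

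In more detail, I would first recall the elementary lemma: if $L \colon \CCC \to \DDD$ admits a fully faithful left adjoint and $W \coloneqq \{f \in \CCC \mid L(f) \text{ is an iso}\}$, then $L$ factors through the localization $\CCC \to \CCC[W^{-1}]$ via a canonical equivalence $\CCC[W^{-1}] \xrightarrow{\sim} \DDD$. (One checks the universal property directly: any $F \colon \CCC \to \EEE$ inverting $W$ extends uniquely along $L$ by precomposing with the unit $\id \to iL$, using that $i \colon \DDD \hookrightarrow \CCC$ is fully faithful and that the unit lies pointwise in $W$.) By Proposition~\ref{Prop:perfection_right_Bousfield}, this setup applies to $(-)^\pf \colon \Sch_{S_0} \to \Perf_S$ and to $(-)^\pf \colon \rSch_{S_1} \to \Perf_S$, with fully faithful left adjoints given by postcomposition along $S \to S_0$, respectively $S \to S_1$.

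It remains to identify $W$ with the class $\UH$ of universal homeomorphisms in each case. This is exactly the content of Lemma~\ref{Lem:Bhatt.3.8}: a morphism $f$ in $\Sch_{\F_p}$ (hence in $\Sch_{S_0}$ or in $\rSch_{S_1}$) is a universal homeomorphism if and only if $f^\pf$ is an isomorphism. Combining the two paragraphs yields the claimed canonical equivalences.

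There is no real obstacle; the argument is a formal assembly of the two cited results. The only minor point to be careful about is that, in the reduced case, the left adjoint is the inclusion $\Perf_S \hookrightarrow \rSch_{S_1}$ (perfect schemes are reduced) rather than the inclusion into $\Sch_{S_0}$, so that the class of morphisms inverted is computed within $\rSch_{S_1}$; but this is automatic since Lemma~\ref{Lem:Bhatt.3.8} characterizes universal homeomorphisms by a property of $f^\pf$ alone.
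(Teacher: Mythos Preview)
Your approach is exactly the paper's: combine Proposition~\ref{Prop:perfection_right_Bousfield} (coreflective localization) with Lemma~\ref{Lem:Bhatt.3.8} (the inverted morphisms are precisely the universal homeomorphisms), and invoke the standard fact that a (co)reflection is the localization at the morphisms it inverts. One small slip: since the inclusion $i \colon \Perf_S \hookrightarrow \Sch_{S_0}$ is the \emph{left} adjoint here, the relevant natural transformation is the \emph{counit} $iL \to \id$ (concretely, the perfection map $X^{\pf} \to X$), not a unit $\id \to iL$; the rest of your sketch goes through verbatim with this correction.
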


\begin{proof}
This is immediate from Lemma \ref{Lem:Bhatt.3.8} and Proposition \ref{Prop:perfection_right_Bousfield}.
\end{proof}

\begin{Lem}
\label{Lem:uh_is_limit_fpuh}
    Let $u \colon W \to X$ be a universal homeomorphism between reduced schemes. Then $W$ is a cofiltered limit of reduced $X$-schemes $W_\alpha$ for which $u_\alpha \colon W_\alpha \to X$ is a universal homeomorphism, reducedly of finite presentation.
\end{Lem}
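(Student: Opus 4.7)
The plan exploits the integral structure of $u$ together with the control that Lemma~\ref{Lem:Bhatt.3.8} gives over universal homeomorphisms in characteristic $p$. Since $u$ is a universal homeomorphism it is integral, hence affine, so $W=\Spec_X\AAA$ with $\AAA$ an integral, reduced $\oO_X$-algebra. Being reducedly of finite presentation is Zariski-local on $X$ by Proposition~\ref{Prop:rfp_factorization}, so I may work with $X=\Spec R$ and $W=\Spec A$ for $A$ a reduced integral $R$-algebra; the reducedness of $X$ makes $R\hookrightarrow A$ injective.

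The natural inverse system is $W=\lim_\alpha W_\alpha$ with $W_\alpha:=\Spec B_\alpha$, where $B_\alpha$ ranges over the filtered family of finite $R$-subalgebras of $A$, so that $A=\colim_\alpha B_\alpha$. Each $B_\alpha$ is reduced (as a subring of $A$) and finite over $R$. Moreover, $\Spec B_\alpha\to X$ is itself a universal homeomorphism: it is integral by construction; going-up in integral extensions gives surjectivity; and for any prime $\mathfrak p\subset R$, the unique prime $\mathfrak q\subset A$ over $\mathfrak p$ restricts to the unique prime $\mathfrak q\cap B_\alpha\subset B_\alpha$ over $\mathfrak p$, whose residue field is purely inseparable over $\kappa(\mathfrak p)$ (as a subfield of the purely inseparable extension $\kappa(\mathfrak q)/\kappa(\mathfrak p)$).

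The main task is to show each $W_\alpha\to X$ is reducedly of finite presentation: by Proposition~\ref{Prop:rfp_factorization} it suffices, working locally, to produce a finitely presented $R$-algebra $B'$ with $(B')_\red=B_\alpha$. The crucial input comes from Lemma~\ref{Lem:Bhatt.3.8}: $A^\pf\simeq R^\pf$, and unwinding these as filtered colimits under Frobenius shows that for every $a\in A$ there exist $k\ge 0$ and $c\in R$ with $a^{p^k}=c$ in $A$. Choosing generators $b_1,\dots,b_n$ of $B_\alpha$ over $R$ with corresponding exponents $k_i$ and elements $c_i\in R$, I set
\[ B':=R[T_1,\dots,T_n]/\bigl(T_1^{p^{k_1}}-c_1,\,\dots,\,T_n^{p^{k_n}}-c_n\bigr), \]
which is finitely presented over $R$ with a natural surjection $B'\twoheadrightarrow B_\alpha$ sending $T_i\mapsto b_i$.

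To conclude $(B')_\red=B_\alpha$, I argue that $\Spec B'\to\Spec R$ is itself a universal homeomorphism. Over any $\mathfrak p\subset R$, the identity $T_i^{p^{k_i}}-\bar c_i=(T_i-\bar c_i^{1/p^{k_i}})^{p^{k_i}}$ (valid in an algebraic closure of $\kappa(\mathfrak p)$ in characteristic $p$) shows that each factor $\kappa(\mathfrak p)[T_i]/(T_i^{p^{k_i}}-\bar c_i)$ is a local Artinian ring with purely inseparable residue, and the tensor product of such over $\kappa(\mathfrak p)$ is again local Artinian with purely inseparable residue. Hence $\Spec B'\to\Spec R$ is finite, universally injective, and surjective, i.e., a universal homeomorphism. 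Since both $\Spec B_\alpha$ and $\Spec B'$ then induce homeomorphisms onto $\Spec R$, the closed immersion $\Spec B_\alpha\hookrightarrow\Spec B'$ is a homeomorphism on underlying spaces, hence a nilimmersion, so $(B')_\red=B_\alpha$ as desired. The main obstacle throughout is engineering the fp model $B'$, for which the characteristic-$p$ fact that every element of $A$ has some $p^k$-th power in $R$ is essential.
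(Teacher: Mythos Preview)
Your argument is correct and follows the same two-step strategy as the paper: write $W$ as a cofiltered limit of finite universal homeomorphisms over $X$, then exhibit a finitely presented model for each of these to conclude reduced finite presentation. The paper outsources both steps to \cite[Lem.~2.2, Prop.~2.3]{BarwickTopological}; you instead carry them out by hand, and your construction of $B'$ from the relations $b_i^{p^{k_i}}=c_i\in R$ (extracted via Lemma~\ref{Lem:Bhatt.3.8}) is a pleasant explicit incarnation of the second citation in characteristic~$p$. Incidentally, your fiber-by-fiber check that $\Spec B'\to\Spec R$ is a universal homeomorphism could be replaced by another appeal to Lemma~\ref{Lem:Bhatt.3.8}, since visibly $(B')^{\pf}=R^{\pf}$.

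One small imprecision: reducing to affine $X$ only justifies checking that each $W_\alpha\to X$ has the claimed properties (which are indeed Zariski-local), not constructing the cofiltered system itself. Since $X$ is qcqs and $u$ is affine, you should first form the global filtered system of finite $\oO_X$-subalgebras of $\AAA$, and only then pass to an affine cover for the verifications.
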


\begin{proof}
    By \cite[Lem.~2.2]{BarwickTopological}, we can write $W$ as a cofiltered limit of $X$-schemes $W_\alpha'$ for which $u_\alpha' \colon W_\alpha' \to X$ is a finite universal homeomorphism. By \cite[Prop.~2.3]{BarwickTopological}, each $u_\alpha'$ factors as 
    \[ W_\alpha' \xrightarrow{v_\alpha} W_\alpha'' \xrightarrow{u_\alpha''} X \]
    such that $v_\alpha$ is a nilimmersion and $u_\alpha''$ is of finite presentation. Put $W_\alpha \coloneqq (W_\alpha')_\red \simeq (W_\alpha'')_\red$. Since $W$ is reduced, it holds $W \simeq \lim W_\alpha$ by Lemma \ref{Lem:rSch_cofiltered_lims}. And $u_\alpha \colon W_\alpha \to X$ is reducedly finitely presented by Remark \ref{Rem:fp_rfp_pfp}.
\end{proof}

The class of universal homeomorphisms in $\Sch_{S_0}$ admits a calculus of fractions.\footnote{In other words, universal homeomorphisms form a \emph{right multiplicative system}, cf.\ \citestacks{04VB}.} Indeed, the right Ore condition is immediate by base change. To see the right cancellable condition, consider a diagram
\begin{center}
    \begin{tikzcd}
        X \arrow[r, shift left, "f"] \arrow[r, shift right, "g", swap] & Y \arrow[r, "v"] & Z
    \end{tikzcd}
\end{center}
in $\Sch_{S_0}$ such that $v$ is a universal homeomorphism and such that $vf=vg$. Then $f^{\pf} = g^{\pf}$, hence precomposing with the perfection morphism $X^{\pf} \to X$ coequalizes $f,g$. The following shows that this also works in the reducedly finitely presented case.

\begin{Lem}
\label{Lem:UH_calculus_fractions}
    The class of universal homeomorphisms in $\rSchrfpSo$ admits a calculus of fractions.
\end{Lem}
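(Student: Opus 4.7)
The plan is to verify the three standard axioms of a right calculus of fractions for the class $\Sigma$ of universal homeomorphisms in $\rSchrfpSo$: closure under identities and composition, the right Ore condition, and the right cancellation condition. The first is immediate, since identities are universal homeomorphisms and universal homeomorphisms are closed under composition. The interesting steps are the other two, both of which need the rfp bookkeeping to be handled carefully.

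For the right Ore condition, given $f \colon X \to Z$ and a universal homeomorphism $v \colon Y \to Z$ in $\rSchrfpSo$, I would form the pullback $X \times_Z Y$ in $\SchSo$ and then reduce, setting $W \coloneqq (X \times_Z Y)_\red$. The map $W \to X$ factors as the nilimmersion $W \hookrightarrow X \times_Z Y$ followed by the base change of $v$, hence is a universal homeomorphism; it is reducedly finitely presented by stability of rfp under base change in $\rSchFp$ (Remark~\ref{Rem:rfp_local}), so $W \in \rSchrfpSo$ and we obtain the desired Ore square.

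For the right cancellation condition, suppose $f,g \colon X \to Y$ in $\rSchrfpSo$ and $v \colon Y \to Z$ in $\Sigma$ satisfy $vf = vg$. By Lemma~\ref{Lem:Bhatt.3.8} the map $v^\pf$ is an isomorphism, so $f^\pf = g^\pf$, and by naturality of perfection the canonical map $p_X \colon X^\pf \to X$ coequalizes $f$ and $g$. The catch is that $p_X$ itself is not rfp. To fix this, I would invoke Lemma~\ref{Lem:uh_is_limit_fpuh} to write $X^\pf = \lim_\alpha W_\alpha$ as a cofiltered limit of reduced $X$-schemes with rfp universal homeomorphisms $u_\alpha \colon W_\alpha \to X$; the transition maps arising from Barwick's construction are affine, and each $W_\alpha$ is rfp over $S_1$ by composition. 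Since $Y \to S_1$ is rfp, the canonical map
\[ \colim_\alpha \Hom_{\rSch_{S_1}}(W_\alpha, Y) \to \Hom_{\rSch_{S_1}}(X^\pf, Y) \]
is a bijection. The two elements $f \circ u_\alpha, g \circ u_\alpha$ have common image $f \circ p_X = g \circ p_X$, so they become equal at some stage $\beta \ge \alpha$, yielding $f \circ u_\beta = g \circ u_\beta$ with $u_\beta$ an rfp universal homeomorphism.

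The main obstacle is precisely this cancellation step: the evident universal homeomorphism coequalizing $f$ and $g$, namely $p_X$, lives outside $\rSchrfpSo$, and one must combine Lemma~\ref{Lem:uh_is_limit_fpuh} with the rfp property of $Y$ to descend the coequalization along the cofiltered limit to a single rfp stage. Once this is in hand, everything else is formal.
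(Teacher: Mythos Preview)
Your proposal is correct and follows essentially the same approach as the paper: the right Ore condition is handled by (reduced) base change, and the right cancellation condition is handled by coequalizing with the perfection map $X^\pf \to X$, writing $X^\pf$ as a cofiltered limit of rfp universal homeomorphisms via Lemma~\ref{Lem:uh_is_limit_fpuh}, and using the rfp property of $Y$ to descend the coequalization to a finite stage. The paper simply says ``it suffices to show the right cancellable condition'' and gives the same argument more tersely; your explicit treatment of the Ore step (taking the reduction of the fiber product) and of the affineness of the transition maps fills in details the paper leaves implicit.
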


\begin{proof}
    Again it suffices to show the right cancellable condition. So let
    \begin{center}
    \begin{tikzcd}
        X \arrow[r, shift left, "f"] \arrow[r, shift right, "g", swap] & Y \arrow[r, "v"] & Z
    \end{tikzcd}
\end{center}
    in $\rSchrfpSo$ be given such that $vf=vg$. Let $u \colon X^{\pf} \to X$. As before, it holds $fu=gu$. Using Lemma~\ref{Lem:uh_is_limit_fpuh}, write $X^{\pf}$ as a cofiltered limit of $X$-schemes $W_\alpha$ for which $u_\alpha \colon W_\alpha \to X$ is a reducedly finitely presented universal homeomorphism.
    Since $Y$ is of reduced finite presentation over $S_1$, the identity $fu = gu$ is witnessed by an identity $fu_\alpha = gu_\alpha$ for some $\alpha$. Taking such $u_\alpha \colon W_\alpha \to X$ finishes the argument.
\end{proof}

\begin{Lem}
\label{Lem:descent_pfp_model}
   Let $f \colon X \to S$ be perfectly of finite presentation. Then there is a reducedly finitely presented morphism $f_1 \colon X_1 \to S_1$ such that $f_1^{\pf} = f$.
\end{Lem}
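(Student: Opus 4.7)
The plan is to extract a finitely presented model of $f$ over $S$ from the hypothesis, descend it to $S_1$ along the presentation $S = S_1^{\pf}$, and then pass to the reduction.

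First, by the definition of perfectly of finite presentation (following \cite[Prop.~3.11]{BhattProjectivity}), we may choose a finitely presented morphism $g \colon Y \to S$ in $\SchFp$ together with an identification $Y^{\pf} \simeq X$ of $S$-schemes under which $g^{\pf}$ corresponds to $f$; note that $Y$ need not be perfect. Next, I would use that $S = S_1^{\pf}$ is canonically the cofiltered limit
\[ S \simeq \lim\bigl(\cdots \xrightarrow{\Frob} S_1 \xrightarrow{\Frob} S_1\bigr) \]
in $\SchFp$, with affine transition maps. The standard descent of finitely presented morphisms along cofiltered limits of qcqs schemes with affine transitions (e.g.\ \citestacks{01ZM}) then yields a finitely presented $g_1 \colon Y_1 \to S_1$ together with an isomorphism $g \simeq g_1 \times_{S_1} S$ of $S$-schemes.

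Finally, I would put $X_1 \coloneqq (Y_1)_\red$ and let $f_1 \colon X_1 \to S_1$ be the induced morphism. Then $f_1$ is reducedly finitely presented by Remark~\ref{Rem:fp_rfp_pfp}. Since perfection factors through reduction and, being right adjoint to the inclusion $\PerfFp \hookrightarrow \SchFp$, preserves fiber products, one has
\[ X_1^{\pf} \simeq Y_1^{\pf} \simeq (Y_1 \times_{S_1} S)^{\pf} \simeq Y^{\pf} \simeq X \]
as $S$-schemes, so that $f_1^{\pf}$ is identified with $f$.

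The main (mild) obstacle is the descent step: every level in the inverse system defining $S$ is equal to $S_1$ with Frobenius as the transition map, so one has to keep track of the particular level at which $g$ descends and which canonical map $S \to S_1$ is used for the subsequent base change. This is purely bookkeeping — the transitions are affine and $g$ is of finite presentation, so the standard limit formalism applies directly.
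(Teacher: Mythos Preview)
Your argument is correct and follows essentially the same strategy as the paper: descend a finitely presented model of $f$ along a presentation of $S$ as a cofiltered limit over $S_1$, then take the reduction. The only difference is that you use the Frobenius tower $S \simeq \lim(\cdots \xrightarrow{\Frob} S_1)$ directly, whereas the paper invokes Lemma~\ref{Lem:uh_is_limit_fpuh} to write $S$ as a limit of reducedly finitely presented universal homeomorphisms $S_\alpha \to S_1$; your choice is a bit more elementary and avoids that lemma. One remark on the ``bookkeeping'': in your displayed chain, the identification $Y \simeq Y_1 \times_{S_1} S$ is along the projection $\pi_n$ to the $n$-th level, not along the canonical $\pi_0$, so strictly speaking $(Y_1 \times_{S_1,\pi_0} S)^{\pf}$ and $Y^{\pf}$ differ by a Frobenius twist of $S$; this is harmless precisely because $\pi_n^{\pf} \colon S \to S$ is an automorphism, which is the one-line justification your final paragraph gestures at but does not quite spell out.
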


\begin{proof}
    Take a finitely presented model $X_0 \to S$ of $f \colon X\to S$ with associated universal homeomorphism $X \to X_0$. By Lemma~\ref{Lem:uh_is_limit_fpuh}, write $S$ as a cofiltered limit of  $S_1$-schemes $S_\alpha$ for which the structure map $S_\alpha \to S_1$ is a reducedly finitely presented universal homeomorphism. By \cite[\href{https://stacks.math.columbia.edu/tag/01ZM}{Tag 01ZM}]{stacks-project}, we find some $\alpha$ and some finitely presented scheme $X_{0 \alpha} \to S_\alpha$ which pulls back to $X_0 \to S$ along the projection $S \to S_\alpha$. Let $X_1 \to X_{1\alpha}$ be the result of applying $(-)_\red$ to the projection $X_0 \to X_{0 \alpha}$. Then the composition $X_{1 \alpha} \to S_\alpha \to S_1$ is reducedly finitely presented, and is sent to $f$ under perfection.
\end{proof}

Consider the commutative diagram
\begin{center}
	\begin{tikzcd}
		\rSchrfpSo \arrow[r, "\gamma'"] \arrow[d, hook]
		& \rSchrfpSo[\UH^{-1}] \arrow[r, "\sigma'"] \arrow[d, "\bar{\iota}"]
		& \Perfpfp_S \arrow[d, hook] \\
		\rSch_{S_1} \arrow[r, "\gamma"]
		& \rSch_{S_1}[\UH\inv] \arrow[r, "\sigma"]
		& \Perf_S
	\end{tikzcd}
\end{center}
where $\gamma',\gamma$ are the localizations, and $\sigma',\bar{\iota},\sigma$ are induced by the universal properties of the localizations. We have seen that $\rSch_{S_1} \to \Perf_S$ is a localization at the universal homeomorphisms, i.e., that $\sigma$ is an equivalence. The following shows that also $\rSchrfpSo \to \PerfpfpS$ is a localization at the universal homeomorphisms:

\begin{Lem} \label{Lem:FiniteType-UH-localisation}
    The functor $\sigma'$ is an equivalence.
\end{Lem}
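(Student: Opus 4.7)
The plan is to verify that $\sigma'$ is both essentially surjective and fully faithful.

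For essential surjectivity, any $X \in \PerfpfpS$ admits, by Lemma~\ref{Lem:descent_pfp_model}, a reducedly finitely presented $X_1 \in \rSchrfpSo$ with $X_1^\pf \simeq X$. Then $\sigma'(\gamma'(X_1)) \simeq X$ inside the fully faithful inclusion $\PerfpfpS \hookrightarrow \Perf_S$, giving essential surjectivity.

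For fully faithfulness, since $\sigma$ is already an equivalence and the vertical inclusions in the diagram are fully faithful, it suffices to show that $\bar{\iota}$ is fully faithful. I would do this by computing hom-sets on both sides via calculus of fractions. Lemma~\ref{Lem:UH_calculus_fractions} provides one in $\rSchrfpSo$, and the same argument (base change gives the Ore condition; the perfection morphism witnesses right cancellation and lies in $\rSch_{S_1}$ since perfect schemes are reduced) provides one in $\rSch_{S_1}$. For $X_1, Y_1 \in \rSchrfpSo$ we thereby obtain
\[
\Hom_{\rSchrfpSo[\UH^{-1}]}(X_1, Y_1) \cong \colim_{W \to X_1 \text{ rfp UH}} \Hom_{\rSchrfpSo}(W, Y_1),
\]
\[
\Hom_{\rSch_{S_1}[\UH^{-1}]}(X_1, Y_1) \cong \colim_{W \to X_1 \text{ UH}} \Hom_{\rSch_{S_1}}(W, Y_1),
\]
with the comparison induced by the inclusion of indexing categories. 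By Lemma~\ref{Lem:uh_is_limit_fpuh} every UH $W \to X_1$ in $\rSch_{S_1}$ arises as a cofiltered limit of rfp UHs $W_\alpha \to X_1$; moreover each projection $W \to W_\alpha$ has the isomorphism $W^\pf \simeq X_1^\pf \simeq W_\alpha^\pf$ as its perfection, hence is itself a UH by Lemma~\ref{Lem:Bhatt.3.8}. Surjectivity of the colimit comparison follows: any $f \colon W \to Y_1$ factors, by rfp-ness of $Y_1$, through some $f_\alpha \colon W_\alpha \to Y_1$, and $(W_\alpha, f_\alpha)$ is an rfp representative equivalent to $(u,f)$. Injectivity is similar: an equalizing UH refinement $V \to X_1$ of two rfp representatives again writes as $V = \lim V_\alpha$, and the rfp-ness of both $X_1$ and $Y_1$ lets the relevant commutativities (structure map to $X_1$ and equality of the two maps to $Y_1$) descend to some $V_\alpha \in \rSchrfpSo$.

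The main obstacle I foresee is the bookkeeping of the calculus-of-fractions presentations---specifically, verifying that the comparison of colimits is genuinely bijective. This amounts to showing that rfp UHs form a cofinal-enough subcategory of all UHs into $X_1$ in the sense needed for fractions, which is exactly what Lemma~\ref{Lem:uh_is_limit_fpuh} combined with the rfp-ness of $X_1$ and $Y_1$ delivers; I do not anticipate serious difficulties beyond this formal verification.
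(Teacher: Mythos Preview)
Your proposal is correct and follows essentially the same approach as the paper: essential surjectivity via Lemma~\ref{Lem:descent_pfp_model}, reduction of fully faithfulness to that of $\bar{\iota}$, and comparison of span descriptions using Lemma~\ref{Lem:uh_is_limit_fpuh} together with the reduced finite presentation of $Y_1$ to descend maps from $W$ to some $W_\alpha$. Your treatment is in fact a bit more explicit than the paper's, which only spells out the surjectivity half and leaves the injectivity of the hom-set comparison implicit; your separate handling of injectivity (and the observation that $W \to W_\alpha$ is itself a UH via Lemma~\ref{Lem:Bhatt.3.8}) fills that in cleanly.
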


\begin{proof}
    Since $\sigma'$ is induced by taking perfections, it is essentially surjective by Lemma~\ref{Lem:descent_pfp_model}. It thus suffices to show that the functor $\bar{\iota}$ is fully faithful. To this end, let $X,Y \in \rSchrfpSo$ be given. Using Lemma \ref{Lem:UH_calculus_fractions}, we identify morphisms in our localizations with equivalence classes of spans.\footnote{Recall that two spans are equivalent when they are dominated by a third.} 

    Consider a span
    \[ X \xleftarrow{v} W \xrightarrow{p} Y \]
    in $\Sch_{S_1}[\UH\inv]$, hence with $v$ a universal homeomorphism. By Lemma~\ref{Lem:uh_is_limit_fpuh}, we can write $W$ as a cofilitered limit of reducedly finitely presented $X$-schemes $W_\alpha$ for which $W_\alpha \to X$ is a universal homeomorphism. Since $Y \to S_1$ is reducedly finitely presented, the morphism $p$ factorizes through some $p_\alpha \colon W_\alpha \to Y$. Hence we can replace the given span by the span induced by $W_\alpha$. Now $W_\alpha$ is reducedly finitely presented over $S_1$, which implies fully faithfulness.
\end{proof}

\begin{Rem}
\label{Rem:Noetherian_induction}
    In what follows, we will be using the following familiar set-up for Noetherian induction. Write $S_0$ as cofiltered limit of finitely presented $\Fp$-schemes $S_{0\alpha}$ and put $S_\alpha \coloneqq (S_{0 \alpha})_{\pf}$. Observe that $S \simeq \lim S_\alpha$, and that each $S_{0 \alpha}$ is Noetherian. Recall that there are canonical equivalences of categories
    \begin{align*}
        \SchfpSn \simeq \colim_\alpha \Schfp_{S_{ 0 \alpha }}, &&  \PerfpfpS \simeq \colim_\alpha \Perfpfp_{S_{\alpha}},
    \end{align*}
in $\Cat_\infty$ and hence equivalences of presheaf categories
\begin{align*}
        \PPP(\SchfpSn) \simeq \lim_\alpha \PPP(\Schfp_{S_{0 \alpha }}), &&  \PPP(\PerfpfpS) \simeq \lim_\alpha \PPP(\Perfpfp_{S_{\alpha}}),
\end{align*}
in $\Cat_\infty$.
    In this situation, we have the following descent result by \cite[Thm.~8.10.5]{EGA4}. Let $\gamma$ be a Nisnevich square (resp.\ an abstract blow-up square) in $\SchfpSn$. Then there is some $\alpha$ and some Nisnevisch square (resp.\ abstract blow-up square) $\gamma_\alpha$ in $\Schfp_{S_{0 \alpha}}$ which pulls back to $\gamma$ along $S_0 \to S_{0 \alpha}$. Likewise for perfect Nisnevich squares (resp.\ perfect abstract blow-up squares) in $\PerfpfpS$. 
\end{Rem}

\begin{Prop}
\label{Prop:perfection_localization}
    The perfection functor $\rho \colon \SchfpSn \to \PerfpfpS$ is a localization at the universal homeomorphism $\SchfpSn[\UH\inv] \simeq \PerfpfpS$. 
\end{Prop}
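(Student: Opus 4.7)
The plan is to first reduce to the case where $S_0$ is Noetherian via the standard limit setup of Remark~\ref{Rem:Noetherian_induction}, and then to factor $\rho$ through the reduction functor in order to combine Corollary~\ref{Cor:rpf_is_lfp_Noetherian} with Lemma~\ref{Lem:FiniteType-UH-localisation}. That $\rho$ inverts every universal homeomorphism is Lemma~\ref{Lem:Bhatt.3.8}, so the content lies in establishing the universal property of the localization.

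For the Noetherian reduction, I would write $S_0 \simeq \lim_\alpha S_{0\alpha}$ with each $S_{0\alpha}$ finitely presented (hence Noetherian) over $\F_p$ as in Remark~\ref{Rem:Noetherian_induction}. This yields $\SchfpSn \simeq \colim_\alpha \Schfp_{S_{0\alpha}}$ and $\PerfpfpS \simeq \colim_\alpha \Perfpfp_{S_\alpha}$ in $\Cat_\infty$, and presents $\rho$ as the filtered colimit of perfection functors $\rho_\alpha$. Since $\infty$-categorical localization at a class of morphisms preserves filtered colimits in $\Cat_\infty$, and since every universal homeomorphism in $\SchfpSn$ descends (after enlarging the index) to a universal homeomorphism in some $\Schfp_{S_{0\alpha}}$ by fp-descent of its constituent properties---finite, universally injective, and surjective---it suffices to treat the Noetherian case.

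In the Noetherian case, factor $\rho = G \circ F$ with $F = (-)_\red \colon \SchfpSn \to \rSchrfpSo$ and $G = (-)^\pf \colon \rSchrfpSo \to \PerfpfpS$. By Corollary~\ref{Cor:rpf_is_lfp_Noetherian}, $F$ is a coreflective localization whose fully faithful left adjoint $L$ is the inclusion induced by $S_1 \hookrightarrow S_0$ (using that in the Noetherian setting this nilimmersion is finitely presented); a short check shows that the class inverted by $F$---morphisms $f$ with $f_\red$ an isomorphism---is contained in the class of universal homeomorphisms. By Lemma~\ref{Lem:FiniteType-UH-localisation}, $G$ is the localization of $\rSchrfpSo$ at universal homeomorphisms. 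Now given $H \colon \SchfpSn \to \DDD$ inverting universal homeomorphisms, $H$ inverts in particular the class inverted by $F$, so the coreflective localization property yields a unique factorization $H \simeq H' \circ F$ with $H' \simeq H \circ L$. Since being a universal homeomorphism is an intrinsic property of a morphism of schemes, $L$ preserves universal homeomorphisms, whence $H'$ inverts universal homeomorphisms in $\rSchrfpSo$; Lemma~\ref{Lem:FiniteType-UH-localisation} then factors $H'$ through $G$, yielding the required factorization of $H$ through $\rho$.

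The main obstacle is the Noetherian reduction: one must carefully verify that $\infty$-categorical localization at the universal homeomorphisms commutes with the filtered colimit presentations of $\SchfpSn$ and $\PerfpfpS$. This rests on fp-descent of the defining properties of universal homeomorphisms via \cite[IV.8]{EGA4}, together with the general fact that the localization functor on pairs consisting of a category together with a class of morphisms is a left adjoint and hence preserves colimits.
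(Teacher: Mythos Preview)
Your proposal is correct and follows essentially the same approach as the paper: both reduce to the Noetherian case via the limit presentation of Remark~\ref{Rem:Noetherian_induction} together with fp-descent of universal homeomorphisms, and then invoke Corollary~\ref{Cor:rpf_is_lfp_Noetherian} and Lemma~\ref{Lem:FiniteType-UH-localisation}. The only cosmetic difference is that the paper phrases the reduction via the equivalence $\Fun^{\UH}(\SchfpSn,\CCC) \simeq \lim_\alpha \Fun^{\UH}(\Schfp_{S_{0\alpha}},\CCC)$ rather than via commutation of localization with filtered colimits, and leaves the factorization $\rho = (-)^\pf \circ (-)_\red$ in the Noetherian case implicit.
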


\begin{proof}
   Write $S_0 \simeq \lim S_{0 \alpha}$ as in Remark~\ref{Rem:Noetherian_induction}. By \cite[Thm.~8.10.5]{EGA4} it holds that $f \colon X \to Y$ in $\SchfpSn$ is a universal homeomorphism if and only if there is some $\alpha$  and some universal homeomorphism $f_\alpha \colon X_\alpha \to Y_\alpha$ in $\Schfp_{S_{0 \alpha }}$ which pulls back to $f$ along $S_0 \to S_{0 \alpha}$. For any $\infty$-category $\CCC$ it follows that the equivalence $\Fun(\SchfpSn,\CCC) \simeq \lim \Fun(\Schfp_{S_{0 
   \alpha}},\CCC)$ restricts to an equivalence
   \[ \Fun^{\UH}(\SchfpSn,\CCC) \simeq \lim \Fun^{\UH}(\Schfp_{S_{0 
   \alpha}},\CCC), \]
   where $\Fun^{\UH}(-,-)$ means functors that invert universal homeomorphisms. By Corollary~\ref{Cor:rpf_is_lfp_Noetherian} and Lemma~\ref{Lem:FiniteType-UH-localisation} the claim now follows directly from the universal property of localizations.
\end{proof}

\begin{Rem}
    When $S_0$ is the spectrum of a perfect field, then Proposition~\ref{Prop:perfection_localization} recovers \cite[Lem.~1.32]{CarlsonReconstructionschemesetaletopoi} (c.f.\ \cite[Lem.~2.9]{richarz-scholbach}).
    
    When for any $X \in \SchfpSn$ it holds that the absolute Frobenius on $X$ is of finite presentation, then the argument in \cite{CarlsonReconstructionschemesetaletopoi} also goes through to show that the canonical map
    \[ \SchfpSn[\Frob^{-1}] \to \SchfpSn[\UH^{-1}] \]
    is invertible, where the decoration $[\Frob^{-1}]$ means we are inverting all absolute Frobenii in $\SchfpSn$.
    This assumption holds if and only if the absolute Frobenius on $S_0$ itself is of finite presentation, for example when $S_0$ is of finite type over a perfect Noetherian base.  \cite[\href{https://stacks.math.columbia.edu/tag/0CCD}{Tag 0CCD}]{stacks-project}.
\end{Rem}

\section{The pcdh-topology}
\label{Sec:pcdh}
When emphasizing the perfect setting, we call the restriction of the Nisnevich topology to $\PerfFp$ the \emph{perfect} Nisnevich topology. Similarly for the terms \emph{perfect} Nisnevich square and \emph{perfect} Nisnevich sheaf. Thus, as in the ordinary case, a Cartesian square 
\begin{equation}
\label{Eq:pfNis_square}
    \begin{tikzcd}
        W \arrow[r] \arrow[d] & V \arrow[d, "p"] \\
        U \arrow[r, "j"] & X
    \end{tikzcd}
\end{equation}
in $\PerfpfpS$ is a \emph{(perfect) Nisnevich square} if $j$ is an open immersion 
and $p$ is \'{e}tale such that $p^{-1}((X\setminus U)_\red) \to (X\setminus U)_\red$ is an isomorphism.

\begin{Lem}
\label{Lem:pfNis_excission}
    A presheaf on $\PerfpfpS$ satisfies Nisnevich excision if and only if it is a sheaf for the Nisnevich topology.\footnote{Recall that a presheaf is \emph{Nisnevich excisive} if it sends Nisnevich squares to Cartesian squares and the empty scheme to the point.}
\end{Lem}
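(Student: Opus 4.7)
The strategy is to invoke Voevodsky's theorem on cd-structures: if $P$ is a complete, regular, and bounded cd-structure on a small category $\CCC$ with initial object, then a presheaf $F$ on $\CCC$ is a sheaf for the topology generated by $P$ if and only if $F$ sends the initial object to the terminal object and every $P$-square to a Cartesian square. Applied to the cd-structure of perfect Nisnevich squares on $\PerfpfpS$, this yields precisely the stated equivalence, since $\emptyset \in \PerfpfpS$ is initial and, as remarked just before the statement, the Nisnevich topology on $\PerfpfpS$ is the restriction of the classical one, i.e., the one generated by completely decomposed étale families.

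Thus the task reduces to verifying the three axioms. Regularity follows from the fact that $p$ is étale, so the diagonal $V \to V \times_X V$ decomposes as a clopen immersion; combined with the reduced complement condition this gives the required structure. These facts transfer from the classical case without change since, by Lemma~\ref{Lem:Bhatt.3.4}, perfection preserves étale morphisms as well as open and closed immersions. For boundedness, one can use the density structure given by Krull dimension: the Krull dimension of a perfect scheme coincides with that of any classical model, and the verification that Nisnevich squares satisfy the bounded condition relative to this density structure is formal and transfers verbatim.

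For completeness, one must show that every covering sieve for the perfect Nisnevich topology on some $X \in \PerfpfpS$ can be refined by iterated perfect Nisnevich squares. This adapts the classical argument: given a completely decomposed étale family $\{f_i \colon X_i \to X\}_{i \in I}$, one picks a generic point of an irreducible component of $X$, uses complete decomposition to find a preimage in some $X_i$ with the same residue field, spreads out to an étale neighbourhood admitting a section over a nonempty open $U \subset X$, and extracts the resulting perfect Nisnevich square using the reduced complement $(X \setminus U)_\red$. This complement is again perfect, since reduced closed subschemes of perfect schemes are perfect, and it lies in $\PerfpfpS$ because $f_i$ and $j$ are pfp. One then inducts on the dimension of $X$; to reduce to the finite-dimensional case, one uses Remark~\ref{Rem:Noetherian_induction} to pass to a Noetherian model $S_{0 \alpha}$.

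The main obstacle is the inductive step in completeness, where one must track that each refinement stays inside $\PerfpfpS$ and that the reduced complement condition is stable under the spreading-out arguments---both of which hinge on the fact that the class of perfect schemes is closed under reduced closed immersions, open immersions, and étale maps (Lemma~\ref{Lem:Bhatt.3.4}), together with the descent of pfp data established in Lemma~\ref{Lem:descent_pfp_model}.
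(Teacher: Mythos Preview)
Your route through Voevodsky's cd-structure theorem is different from the paper's, which simply cites \cite[Prop.~3.8]{HoyoisSix} and observes that its hypotheses---essentially that the category be closed under finite coproducts and under \'etale extensions---hold for $\PerfpfpS$ by topological invariance (\'etale over perfect is perfect, \cite[Thm.~3.7]{BhattProjectivity}). Hoyois's proof is a direct \v{C}ech-descent argument via splitting sequences and does not pass through the complete/regular/bounded axioms at all.

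There is a genuine gap in your boundedness step. The density structure defined by Krull dimension only yields boundedness when the objects have \emph{finite} Krull dimension, and nothing in the setup guarantees this: $S$ is an arbitrary qcqs perfect $\Fp$-scheme, so objects of $\PerfpfpS$ may well be infinite-dimensional. Your reduction to a Noetherian model via Remark~\ref{Rem:Noetherian_induction} is invoked only for completeness, not for boundedness, and in any case that remark descends individual squares, not the density structure. Fortunately the gap is inessential: the characterisation ``sheaf $\Leftrightarrow$ cd-excisive'' for presheaves of spaces needs only completeness and regularity, not boundedness (this is exactly what Hoyois's splitting-sequence proof shows; see also \cite[App.~A]{BachmannNorms}). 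So you should simply drop the boundedness claim.

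A smaller point on completeness: the induction you want is not on the Krull dimension of $X$ but on the length of a splitting sequence for the completely decomposed \'etale map $U \to X$, which is finite because the map is finitely presented (cf.\ \cite[Lem.~2.1.2]{ehik--milnor-excision} and its use in Lemma~\ref{Lem:cd_splittings_sequence} here). This avoids any appeal to Noetherian approximation and is precisely the mechanism behind Hoyois's argument.
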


\begin{proof}
    This is \cite[Prop.~3.8]{HoyoisSix}. To be sure, the requirements are satisfied by topological invariance \cite[Thm.~3.7]{BhattProjectivity}.
\end{proof}

\begin{Lem}
\label{Lem:Nis_square_lift}
    Any perfect Nisnevich square in $\PerfpfpS$ is the image under perfection of a Nisnevich square in $\SchfpSn$. 
\end{Lem}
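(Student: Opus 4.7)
The plan is to first reduce to a Noetherian base, then lift the data of the square to finitely presented schemes, and finally promote the Nisnevich compatibility from the perfect to the finitely presented level. For the reduction, we invoke Remark~\ref{Rem:Noetherian_induction} to write $S_0 \simeq \lim_\alpha S_{0,\alpha}$ with each $S_{0,\alpha}$ finitely presented over $\F_p$ (hence Noetherian), observe (via \cite[Thm.~8.10.5]{EGA4}) that any perfect Nisnevich square in $\PerfpfpS$ is pulled back from one in some $\Perfpfp_{S_\alpha}$, and note that a Nisnevich-square lift in $\Schfp_{S_{0,\alpha}}$ pulls back to the desired lift in $\SchfpSn$. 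This reduces the problem to the case where $S_0$ is Noetherian.

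For the lifting step we use Proposition~\ref{Prop:perfection_localization} to pick $X_0 \in \SchfpSn$ with $X_0^\pf \simeq X$. Since $X \to X_0$ is a universal homeomorphism, topological invariance of the Zariski topology yields a unique open subscheme $U_0 \subseteq X_0$ whose perfection agrees with $U$ inside $X$; denote the resulting open immersion by $j_0 \colon U_0 \hookrightarrow X_0$. For the étale map $p \colon V \to X$, we combine \cite[Prop.~3.11]{BhattProjectivity}, which furnishes a finitely presented model, with topological invariance of the étale site under universal homeomorphisms to obtain an étale finitely presented $p_0 \colon V_0 \to X_0$ whose perfection recovers $p$ as an $X$-scheme. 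Since $X \to X_0$ is a cofiltered limit of Frobenius-type universal homeomorphisms rather than a single finite one, producing $p_0$ requires a limit argument in the style of \cite[\S{}8, \S{}17]{EGA4} to descend étaleness to a fixed finite level. We then form the Cartesian square in $\SchfpSn$ with $W_0 := V_0 \times_{X_0} U_0$; as perfection is a right adjoint and therefore preserves finite limits, its perfection is the given Cartesian square, so in particular $W_0^\pf \simeq W$.

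For the Nisnevich condition we set $Z_0 := (X_0 \setminus U_0)_\red$ and consider the base change $q \colon p_0^{-1}(Z_0) \to Z_0$, which is étale. Because perfection commutes with fibre products and carries a reduced closed subscheme of $X_0$ to the reduced closed subscheme of $X$ with the same underlying topological space, $q^\pf$ is identified with the isomorphism $p^{-1}((X \setminus U)_\red) \to (X \setminus U)_\red$ from the perfect Nisnevich condition. Lemma~\ref{Lem:Bhatt.3.8} then upgrades $q$ to a universal homeomorphism, and an étale universal homeomorphism is automatically an isomorphism (étale plus radicial gives an open immersion, and a surjective open immersion is the identity). We expect the main obstacle to be finding the étale lift $p_0$ in the previous step: \cite[Prop.~3.11]{BhattProjectivity} gives a finitely presented model but not automatically an étale one, and topological invariance of the étale site for the infinite-level universal homeomorphism $X \to X_0$ has to be obtained by combining the finite-level classical statement with a careful limit argument. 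Once this is in place, the Nisnevich verification goes through cleanly via the ``étale universal homeomorphism $=$ isomorphism'' trick.
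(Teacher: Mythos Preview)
Your proof is correct and follows essentially the same route as the paper's. The only differences are in ordering and emphasis: the paper first lifts the square to the category $\rSchrfpSo$ via Lemma~\ref{Lem:descent_pfp_model} and performs the Noetherian approximation at the end (via Remark~\ref{Rem:Noetherian_induction} and Corollary~\ref{Cor:rpf_is_lfp_Noetherian}), whereas you reduce to a Noetherian base first and then lift directly in $\SchfpSn$. Both are fine.

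One point is simpler than you fear: topological invariance of the \'etale site holds for \emph{any} universal homeomorphism, not just finite-order thickenings, so the \'etale lift $p_0\colon V_0\to X_0$ of $p$ along $X\to X_0$ exists by a single direct application of \cite[Thm.~3.7]{BhattProjectivity}; no limit argument through the Frobenius tower is needed. The resulting $p_0$ is automatically of finite presentation because \'etale maps are locally of finite presentation and the qcqs property transfers along the universal homeomorphism $V\to V_0$. Your verification of the Nisnevich condition via ``\'etale universal homeomorphism $\Rightarrow$ isomorphism'' is more explicit than the paper's phrasing (which just calls the condition ``purely topological''), and is exactly the right argument.
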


\begin{proof}
    Let a perfect Nisnevich square as in (\ref{Eq:pfNis_square}) be given. Using Corollary~\ref{Lem:descent_pfp_model}, take $X_1 \in \rSchrfpSo$ such that $X_1^{\pf} = X$. By topological invariance, we obtain an open immersion $j_1 \colon U_1 \to X_1$ and an \'{e}tale morphism $p_1 \colon V_1 \to X_1$ such that the perfection of the Cartesian square
    \begin{equation}
    \label{Eq:Nis_lift}
        \begin{tikzcd}
            W_1 \arrow[r] \arrow[d] & V_1 \arrow[d, "p_1"] \\
            U_1 \arrow[r, "j_1"] & X_1
        \end{tikzcd}
    \end{equation}
    is the given square (\ref{Eq:pfNis_square}) \cite[Thm.~3.7]{BhattProjectivity}. Let $Z_1$ be the reduced complement of $j_1$. The question whether $p_1^{-1}(Z_1) \to Z_1$ is an equivalence is a purely topological question, hence can be checked up to perfection, in which case we recover the map $p^{-1}(Z) \to Z$ where $Z$ is the perfect complement of $j$. 

    At this point, we have lifted any perfect Nisnevich square in $\PerfpfpS$ to a Nisnevich square in $\rSchrfpSo$ along the perfection. To obtain a lift to $\SchfpSn$, write $S_0$ as cofiltered limit of finitely presented $\Fp$-schemes $S_{0 \alpha}$ as in Remark \ref{Rem:Noetherian_induction}, and use descent for Nisnevich squares together with Corollary~\ref{Cor:rpf_is_lfp_Noetherian}.
\end{proof}

\begin{Prop}
\label{Prop:pNis_vs_Nis}
    A presheaf $F \in \PPP(\PerfpfpS)$ is a perfect Nisnevich sheaf if and only if $F((-)^{\pf}) \in \PPP(\SchfpSn)$ is a Nisnevich sheaf.
\end{Prop}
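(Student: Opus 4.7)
The plan is to reduce the sheaf condition on each side to Nisnevich excision via Lemma~\ref{Lem:pfNis_excission} (and its classical analogue), and then match excision on the two sides using the interaction between Nisnevich squares and perfection.

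First, I would set up the correspondence between squares. For the forward direction, I would show that if
\begin{center}
    \begin{tikzcd}
        W \arrow[r] \arrow[d] & V \arrow[d, "p"] \\
        U \arrow[r, "j"] & X
    \end{tikzcd}
\end{center}
is a Nisnevich square in $\SchfpSn$, then its image under perfection is a perfect Nisnevich square in $\PerfpfpS$. Here the perfection functor preserves the Cartesianness of the square since it is a right adjoint (to the inclusion of perfect schemes) and the pullback in $\SchfpSn$ agrees with the one computed in $\Sch_{\F_p}$; it preserves open immersions and \'{e}taleness by Lemma~\ref{Lem:Bhatt.3.4}; and the reduced complement $Z = (X \setminus U)_\red$ perfects to a closed subscheme $Z^\pf \subset X^\pf$ which, being perfect hence reduced and supported on $X^\pf \setminus U^\pf$, agrees with $(X^\pf \setminus U^\pf)_\red$. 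Since perfection preserves pullbacks, the isomorphism $p^{-1}(Z) \to Z$ perfects to the required isomorphism $(p^\pf)^{-1}(Z^\pf) \to Z^\pf$. For the reverse direction, any perfect Nisnevich square in $\PerfpfpS$ is the image under perfection of a Nisnevich square in $\SchfpSn$ by Lemma~\ref{Lem:Nis_square_lift}.

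Next I would handle the empty scheme condition trivially: $\emptyset_{\SchfpSn}^{\pf} = \emptyset_{\PerfpfpS}$, so $F(\emptyset_{\PerfpfpS}) \simeq *$ iff $F((-)^\pf)(\emptyset_{\SchfpSn}) \simeq *$.

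Putting these together: by Lemma~\ref{Lem:pfNis_excission}, $F$ is a perfect Nisnevich sheaf iff it sends perfect Nisnevich squares to Cartesian squares and the empty scheme to a point; by the classical analogue, $F((-)^\pf)$ is a Nisnevich sheaf iff it satisfies the corresponding excision in $\SchfpSn$. The forward implication then follows by applying $F$ to the perfection of a Nisnevich square, and the reverse by lifting perfect Nisnevich squares to Nisnevich squares in $\SchfpSn$ and applying $F((-)^\pf)$.

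The only step requiring real care is the compatibility of perfection with the reduced complement in the Nisnevich square condition; everything else is formal consequence of the adjoint nature of perfection, Lemma~\ref{Lem:Bhatt.3.4}, and Lemma~\ref{Lem:Nis_square_lift}.
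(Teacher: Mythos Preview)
Your proposal is correct and essentially the same as the paper's proof: both use Lemma~\ref{Lem:Nis_square_lift} together with Lemma~\ref{Lem:pfNis_excission} for the direction ``$F((-)^{\pf})$ Nisnevich $\Rightarrow$ $F$ perfect Nisnevich''. The only minor difference is in the other direction: the paper argues directly that perfection preserves Nisnevich covering families (so $F$ a perfect Nisnevich sheaf immediately gives $F((-)^{\pf})$ a Nisnevich sheaf), whereas you stay in the excision framework and verify that perfection sends Nisnevich squares to perfect Nisnevich squares---this is equivalent content, just phrased via squares instead of covers.
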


\begin{proof}
    If $F$ is a perfect Nisnevich sheaf then $F((-)^{\pf})$ is a Nisnevich sheaf, since perfection preserves the property of being 
    a Nisnevich covering family. By Lemma \ref{Lem:pfNis_excission}, the converse follows from Lemma \ref{Lem:Nis_square_lift}.
\end{proof}

We will now study a topology on perfect schemes analogous to the cdh-topology. A morphism in $\PerfFp$ is called \emph{perfectly proper} if it is the perfection of a proper map. A perfectly finitely presented map $f \colon X \to Y$ is perfectly proper if and only if any finitely presented model $f_0 \colon X_0 \to Y$ is proper, hence this agree with \cite[Def.~3.14, Cor.~3.15]{BhattProjectivity}.

\begin{Def}
    The \emph{pcdp-topology} on $\PerfpfpS$ is generated by completely decomposed families of perfectly proper maps.
\end{Def}

Recall that the cdp-topology on $\SchfpSn$ is associated with the cd-structure consisting of abstract blow-up squares \cite[Prop.~2.1.5]{ehik--milnor-excision}. We show an analogous result for the pcdp-topology.

\begin{Def}
\label{Def:pcdh-square}
    A Cartesian square
    \begin{equation}
    \label{Eq:pablup_square}
        \begin{tikzcd}
            W \arrow[r] \arrow[d] & Y \arrow[d, "p"] \\
            Z \arrow[r, "i"] & X
        \end{tikzcd}
    \end{equation}
    in $\PerfpfpS$ is a \emph{perfect abstract blow-up square} if $i$ is a closed immersion, and $p$ is perfectly proper such that $p^{-1}(X \setminus Z) \to X \setminus Z$ is an ismorphism.
\end{Def}

\begin{Lem}
\label{Lem:cd_splittings_sequence}
    Let $f \colon X \to Y$ perfectly of finite presentation in $\PerfFp$ be given. Then $f$ is completely decomposed if and only if it admits a splitting sequence of perfecty finitely presented, perfect closed subschemes of $Y$.
\end{Lem}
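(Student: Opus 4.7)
The \emph{if} direction is immediate. Given a chain
\[ \emptyset = Y_{n+1} \subsetneq Y_n \subsetneq \cdots \subsetneq Y_0 = Y \]
of perfectly finitely presented, perfect closed subschemes of $Y$ together with sections $s_i$ of $f$ over each locally closed stratum $Y_i \setminus Y_{i+1}$, every $y \in Y$ lies in a unique stratum; then $s_i(y) \in X$ is a preimage of $y$ with $\kappa(s_i(y)) = \kappa(y)$, so $f$ is completely decomposed.

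For the converse, the plan is Noetherian induction on $Y$. Using Lemma~\ref{Lem:descent_pfp_model} together with Remark~\ref{Rem:Noetherian_induction}, we reduce to the case where $Y$ is the perfection of a finitely presented $\F_p$-scheme $Y_0$ (in particular Noetherian) and $f$ is the perfection of a finitely presented morphism $f_0 \colon X_0 \to Y_0$. Under this reduction, complete decomposition of $f$ translates into the following statement about $f_0$: every $y \in Y_0$ admits a preimage $x \in X_0$ with $\kappa(x)/\kappa(y)$ purely inseparable, since purely inseparable extensions of fields become isomorphisms after perfection, and $Y \to Y_0$ is a homeomorphism on underlying spaces by Lemma~\ref{Lem:Bhatt.3.8}.

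Now proceed by Noetherian induction. Let $\eta_1, \ldots, \eta_k$ be the generic points of the irreducible components of $Y_0$, and pick preimages $\xi_i \in X_0$ with $\kappa(\eta_i) \to \kappa(\xi_i)$ purely inseparable. Spreading the $\xi_i$ out using finite presentation of $f_0$, one obtains a dense open $U_0 \subset Y_0$ together with a universal homeomorphism $V_0 \to U_0$ and a section of $f_0|_{U_0}$ defined over $V_0$. After perfection, Lemma~\ref{Lem:Bhatt.3.8} turns $V_0 \to U_0$ into an isomorphism, so $f$ admits a genuine section over $U \coloneqq U_0^{\pf}$. Let $Z_0 \coloneqq (Y_0 \setminus U_0)_\red$; since $Z_0$ is strictly smaller (in the Noetherian order on closed subschemes), the induction hypothesis applied to $f_0|_{f_0^{-1}(Z_0)} \to Z_0$ yields a splitting sequence. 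Perfecting via Lemma~\ref{Lem:Bhatt.3.4} and prepending $Z \coloneqq Z_0^{\pf} \subset Y$ produces the required splitting sequence for $f$. The main obstacle is the spreading-out step, where a section can in general only be found after an auxiliary universal homeomorphism; this is precisely the reason the statement is natural in the perfect setting, since such universal homeomorphisms are collapsed to isomorphisms upon perfection.
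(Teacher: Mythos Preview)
Your argument has a genuine gap at the reduction step, and it also overlooks a much simpler route taken in the paper.

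\textbf{The gap.} You claim to reduce to the case $Y = Y_0^{\pf}$ with $Y_0$ Noetherian, citing Lemma~\ref{Lem:descent_pfp_model} and Remark~\ref{Rem:Noetherian_induction}. But neither result supplies this. Writing a general perfect $Y$ as a cofiltered limit $Y \simeq \lim_\alpha (Y_{0,\alpha})^{\pf}$ with $Y_{0,\alpha}$ of finite type over $\Fp$, one can indeed descend $f$ (being pfp) to some $f_\alpha$ over $(Y_{0,\alpha})^{\pf}$. What does \emph{not} descend for free is complete decomposition: a point of $(Y_{0,\alpha})^{\pf}$ need not lie in the image of $Y$, so there is no reason the descended map $f_\alpha$, or its finite-type model $f_{0,\alpha} \colon X_{0,\alpha} \to Y_{0,\alpha}$, is completely (or inseparably) decomposed. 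Without this, your Noetherian induction never gets started. The spreading-out step afterwards is also only sketched, but that is secondary to the reduction issue.

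\textbf{The paper's approach.} The paper avoids descending $Y$ altogether. Take a finitely presented model $f_0 \colon X_0 \to Y$ with $X_0^{\pf} = X$, keeping the perfect target $Y$. The key observation---which you miss by passing to a non-perfect $Y_0$---is that $f_0$ is already \emph{completely} decomposed, not merely inseparably: for $y \in Y$ pick $x \in X$ over $y$ with $\kappa(x) = \kappa(y)$, and let $x_0 \in X_0$ be its image; then $\kappa(y) \subset \kappa(x_0) \subset \kappa(x_0)^{\pf} = \kappa(x) = \kappa(y)$, so $\kappa(x_0) = \kappa(y)$ since $\kappa(y)$ is perfect. Now \cite[Lem.~2.1.2]{ehik--milnor-excision}, which is valid for finitely presented maps of qcqs schemes with no Noetherian hypothesis, yields a splitting sequence for $f_0$ by finitely presented closed subschemes $Z_i \subset Y$; their perfections give the desired splitting sequence for $f$. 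This is a two-line argument.
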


\begin{proof}
    One direction is clear. Conversely, suppose that $f \colon X \to Y$ is completely decomposed. Take a finitely presented model $f_0 \colon X_0 \to Y$ in $\SchFp$. Observe that $f_0$ is completely decomposed, hence admits a splitting sequence by finitely presented closed subschemes $Z_i \subset Y$ \cite[Lem.~2.1.2]{ehik--milnor-excision}. Taking perfections of the $Z_i$ induces a splitting sequence for $f$ by perfectly finitely presented, perfect closed subschemes.
\end{proof}

\begin{Prop}
\label{Prop:pcdp_via_pblups}
    The perfect abstract blow-up squares in $\PerfpfpS$ generate as cd-structure the pcdp-topology on $\PerfpfpS$.  
\end{Prop}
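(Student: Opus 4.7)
The plan is to prove both inclusions in the standard way, following the pattern of the classical result \cite[Prop.~2.1.5]{ehik--milnor-excision} that abstract blow-up squares generate the cdp-topology on $\SchfpSn$.

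For the easy direction, a perfect abstract blow-up square as in \eqref{Eq:pablup_square} yields the family $\{i\colon Z \to X,\, p\colon Y \to X\}$. Both morphisms are perfectly proper: the closed immersion $i$ is a perfection of a closed immersion, and closed immersions are proper; and $p$ is perfectly proper by hypothesis. The family is completely decomposed, since any point $x \in X$ either lies in $Z$ --- where it lifts trivially along $i$ --- or in $X \setminus Z$, where it lifts along $p$ via the assumed isomorphism $p^{-1}(X \setminus Z) \cong X \setminus Z$, in both cases with trivial residue field extension.

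For the reverse direction, given a completely decomposed pcdp-covering family $\{f_i\colon X_i \to Y\}_{i \in I}$ of perfectly proper morphisms (finite by our qcqs conventions), I replace it by the single morphism $f = \sqcup_i f_i \colon \sqcup_i X_i \to Y$, which is still perfectly proper and completely decomposed. By Lemma~\ref{Lem:cd_splittings_sequence}, $f$ admits a splitting sequence $\emptyset = Z_n \subset Z_{n-1} \subset \cdots \subset Z_0 = Y$ by perfectly finitely presented, perfect closed subschemes, furnishing a section of $f$ over each stratum $Z_i \setminus Z_{i+1}$. I then argue by induction on $n$. The case $n = 0$ is vacuous. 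In the inductive step, the section over $Y \setminus Z_1$ produces --- after descending via Lemma~\ref{Lem:descent_pfp_model} to a finitely presented model in $\rSchrfpSo$, taking scheme-theoretic closure there, and perfecting --- a perfectly finitely presented, perfect closed subscheme $X' \subset X$ such that $f|_{X'}\colon X' \to Y$ is perfectly proper and restricts to an isomorphism over $Y \setminus Z_1$. This yields a perfect abstract blow-up square with closed part $Z_1 \hookrightarrow Y$ and perfectly proper part $X' \to Y$; applying the inductive hypothesis to the pulled-back family on $Z_1$ (which still admits the shortened splitting sequence $\emptyset = Z_n \subset \cdots \subset Z_1$) completes the refinement.

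The main obstacle is producing the perfect closed subscheme $X' \subset X$ as a well-behaved analog of scheme-theoretic closure: since perfect schemes are generally non-Noetherian, closures must be formed at the level of a finitely presented model and then perfected. An alternative route that avoids this delicate construction is Noetherian approximation (Remark~\ref{Rem:Noetherian_induction}): descend the entire pcdp-cover $f$ to a completely decomposed family of proper maps over some Noetherian $S_{0\alpha}$, apply \cite[Prop.~2.1.5]{ehik--milnor-excision} to refine it by abstract blow-up squares in $\Schfp_{S_{0\alpha}}$, and perfect the result. This last step works because perfection sends abstract blow-up squares to perfect abstract blow-up squares: by Lemma~\ref{Lem:Bhatt.3.4} perfection preserves closed immersions, perfectly proper maps are the perfections of proper maps by definition, and the complement condition $p^{-1}(X \setminus Z) \to X \setminus Z$ is purely topological and preserved by the universal homeomorphism $X^\pf \to X$.
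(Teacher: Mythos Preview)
Your proposal is correct and follows essentially the same approach as the paper: invoke Lemma~\ref{Lem:cd_splittings_sequence} for the splitting sequence and then run the inductive refinement argument of \cite[Prop.~2.1.5]{ehik--milnor-excision} in the perfect setting. The paper's own proof is in fact just the one-liner ``using Lemma~\ref{Lem:cd_splittings_sequence}, a similar argument as in the proof of \cite[Prop.~2.1.5]{ehik--milnor-excision} goes through'', so you have supplied considerably more detail than the original, including correctly flagging the closure-in-non-Noetherian-setting issue and a viable workaround.
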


\begin{proof}
    Using Lemma~\ref{Lem:cd_splittings_sequence}, a similar argument as in the proof of \cite[Pop.~2.1.5]{ehik--milnor-excision} goes through.
\end{proof}

\begin{Def}
    \label{Def:pcdh-topology}
    The \emph{pcdh-topology} is the topology on $\PerfpfpS$ generated by the Nisnevich topology and the pcdp topology. 
\end{Def}

\begin{Lem}
\label{Lem:abs_lift}
    Any perfect abstract blow-up square in $\PerfpfpS$ is the image under perfection of an abstract blow-up square in $\SchfpSn$.
\end{Lem}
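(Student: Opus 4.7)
The plan is to mimic the proof of Lemma~\ref{Lem:Nis_square_lift}: first lift the given perfect abstract blow-up square to $\rSchrfpSo$, then descend further to $\SchfpSn$ via Noetherian approximation. Start with a perfect abstract blow-up square as in~(\ref{Eq:pablup_square}). By Lemma~\ref{Lem:descent_pfp_model} there is $X_1 \in \rSchrfpSo$ with $X_1^{\pf} = X$. The closed immersion $i \colon Z \hookrightarrow X$ lifts to a closed immersion $i_1 \colon Z_1 \hookrightarrow X_1$ by taking $Z_1$ to be the reduced closed subscheme supported on the image of $|Z|$ under the topological identification $|X_1| = |X|$; since $Z$ is reduced and perfect, one checks $Z_1^{\pf} = Z$.

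For the perfectly proper morphism $p \colon Y \to X$, the definition of perfectly proper (combined with perfect finite presentation) supplies a proper, finitely presented model $p_0 \colon Y_0 \to X$. Writing $X$ as the cofiltered limit of reducedly finitely presented universal homeomorphisms $u_\beta \colon X_\beta \to X_1$ via Lemma~\ref{Lem:uh_is_limit_fpuh}, we descend $p_0$ to a proper $p_\beta \colon Y_\beta \to X_\beta$ for some $\beta$ by \cite[Thm.~8.10.5]{EGA4}, and set $p_1 \colon Y_1 \coloneqq (Y_\beta)_{\red} \to X_1$ via the composite with $u_\beta$. Then $p_1$ is proper and reducedly finitely presented, and $Y_1^{\pf} = Y$ since both nilimmersions and universal homeomorphisms become invertible after perfection. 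Form the Cartesian square with $W_1 \coloneqq Y_1 \times_{X_1} Z_1$.

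It remains to verify the complement condition $p_1^{-1}(X_1 \setminus Z_1) \xrightarrow{\sim} X_1 \setminus Z_1$, which after perfection is the given isomorphism $p^{-1}(X \setminus Z) \simeq X \setminus Z$ and hence at least a universal homeomorphism. To promote this to an actual isomorphism, refine the inverse system $\{X_\beta\}$ further so that the isomorphism is witnessed schematically at some finite stage, and absorb that stage into the choice of $X_1$. Finally, pass from $\rSchrfpSo$ to $\SchfpSn$ via Noetherian approximation (Remark~\ref{Rem:Noetherian_induction}), Corollary~\ref{Cor:rpf_is_lfp_Noetherian}, and descent of abstract blow-up squares along $S_0 = \lim S_{0\alpha}$ via \cite[Thm.~8.10.5]{EGA4}. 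The main obstacle is precisely the complement condition: unlike in Lemma~\ref{Lem:Nis_square_lift}, where étale rigidity automatically promotes iso-after-perfection to iso, for proper maps iso-after-perfection is only a universal homeomorphism (witness the Frobenius on $\A^1_{\Fp}$), so the refinement step must be executed carefully to guarantee a genuine isomorphism rather than just a universal homeomorphism over the complement.
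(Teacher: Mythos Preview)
Your overall strategy mirrors the paper's: lift to $\rSchrfpSo$ first, then descend to $\SchfpSn$ via Noetherian approximation (Remark~\ref{Rem:Noetherian_induction}). The differences lie in how $Y$ is lifted and how the complement condition is handled.

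For the lift of $Y$, the paper simply applies Lemma~\ref{Lem:descent_pfp_model} a second time, now relative to $X_1$, obtaining $Y_1 \in \rSch^{\rfp}_{X_1}$ with $Y_1^{\pf}=Y$; properness of $Y_1\to X_1$ then follows directly from Corollary~\ref{Cor:rfp_ft} (finite type) together with Lemma~\ref{Lem:Bhatt.3.4} (separated and universally closed). Your route---choosing a proper finitely presented model $Y_0\to X$, descending it along an approximation $X=\lim X_\beta$ from Lemma~\ref{Lem:uh_is_limit_fpuh}, and then reducing---is essentially the proof of Lemma~\ref{Lem:descent_pfp_model} rewritten in place, so it works but is redundant.

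For the complement condition, the paper simply asserts that the condition ``$p_1^{-1}(X_1\setminus Z_1)\to X_1\setminus Z_1$ is an isomorphism'' is purely topological and hence inherited from the perfection, with no further refinement. You are right that for proper (as opposed to \'etale) maps this is not literally automatic---iso after perfection only gives a universal homeomorphism---but the fix you propose is not carried out. Once a finitely presented model $p_0\colon Y_0\to X$ is fixed, its restriction over $U=X\setminus Z$ is a \emph{fixed} universal homeomorphism $p_0^{-1}(U)\to U$; passing to a further stage $X_\beta$ in your tower just base-changes this map along $U\to U_\beta$ and gives no mechanism for turning it into a scheme-theoretic isomorphism. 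The sentence ``refine the inverse system so that the isomorphism is witnessed schematically at some finite stage'' thus names the desired conclusion without supplying the argument, so as written your proposal has a genuine gap precisely at the point you yourself flag as delicate.
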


\begin{proof}
    Let a perfect abstract blow-up square as in (\ref{Eq:pablup_square}) be given. Using Corollary~\ref{Lem:descent_pfp_model} twice, take $X_1 \in \rSchrfpSo$ such that $X_1^{\pf} = X$, and then take $Y_1 \in \rSch^{\rfp}_{X_1}$ such that $Y_1^{\pf} = Y$. Then $Y_1 \to X_1$ is proper: it is of finite type by Corollary \ref{Cor:rfp_ft}, and it is separated and universally closed by Lemma~\ref{Lem:Bhatt.3.4}. Let $i_1 \colon Z_1 \to X_1$ be the reduced closed subscheme on the closed subset $\lvert Z \rvert \subset \lvert X \rvert = \lvert X_1 \rvert$. Clearly we have  $i_1^{\pf} = i$. Hence we obtain a Cartesian square 
    \begin{equation}
    \label{Eq:lift_abstract_blup}
          \begin{tikzcd}
            W_1 \arrow[r] \arrow[d] & Y_1 \arrow[d, "p_1"] \\
            Z_1 \arrow[r, "i_1"] & X_1
        \end{tikzcd}      
    \end{equation}
    in $\rSchrfpSo$ which is sent to the given square (\ref{Eq:pablup_square}) under perfection. Now the condition that $p_1^{-1}(X_1 \setminus Z_1) \to X_1 \setminus Z_1$ is an isomorphism is purely topological, hence follows from the assumption that $p^{-1}(X \setminus Z) \to X \setminus Z$ is an ismorphism. 

    At this point, we have lifted any perfect abstract blow-up square to a Cartesian square in $\rSchrfpSo$ such that the corresponding Cartesian square in $\Sch_{S_1}$ is an abstract blow-up square. We further lift to an abstract blow-up square in $\SchfpSn$ via Remark \ref{Rem:Noetherian_induction}, using a descent argument similar as in the proof of Lemma \ref{Lem:Nis_square_lift}.
\end{proof}

\begin{Prop}
\label{Prop:pcdp_vs_cdp}
    A presheaf $F \in \PPP(\PerfpfpS)$ is a pcdp-sheaf if and only if $F((-)^{\pf}) \in \PPP(\SchfpSn)$ is cdp-sheaf.
\end{Prop}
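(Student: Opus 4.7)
The plan is to mirror the structure of the proof of Proposition \ref{Prop:pNis_vs_Nis}, substituting abstract blow-up squares for Nisnevich squares and replacing Lemma \ref{Lem:Nis_square_lift} with Lemma \ref{Lem:abs_lift}. By Proposition \ref{Prop:pcdp_via_pblups}, a presheaf on $\PerfpfpS$ is a pcdp-sheaf if and only if it satisfies excision for perfect abstract blow-up squares and sends the empty scheme to the point; the analogous characterization holds for cdp-sheaves on $\SchfpSn$ via the cd-structure of abstract blow-up squares as cited from \cite[Prop.~2.1.5]{ehik--milnor-excision}. So it suffices to match excision conditions on both sides.

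For the forward implication, assume $F$ is a perfect-cdp sheaf on $\PerfpfpS$. Given an abstract blow-up square
\[
\begin{tikzcd}
    W_0 \arrow[r] \arrow[d] & Y_0 \arrow[d, "p_0"] \\
    Z_0 \arrow[r, "i_0"] & X_0
\end{tikzcd}
\]
in $\SchfpSn$, I would argue that its perfection is a perfect abstract blow-up square in $\PerfpfpS$ in the sense of Definition \ref{Def:pcdh-square}. Namely, perfection preserves Cartesian squares (being right adjoint to the inclusion $\PerfFp \hookrightarrow \SchFp$), and sends closed immersions to closed immersions by Lemma \ref{Lem:Bhatt.3.4}. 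The perfection of a proper map is perfectly proper by definition, and the isomorphism condition $p_0^{-1}(X_0 \setminus Z_0) \to X_0 \setminus Z_0$ is purely topological and hence preserved by perfection thanks to topological invariance \cite[Thm.~3.7]{BhattProjectivity}. The hypothesis thus gives excision of $F$ on the perfection, which unwinds to excision of $F \circ (-)^{\pf}$ on the original square. The emptiness condition is immediate since $\emptyset^{\pf} = \emptyset$.

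For the converse, assume $F \circ (-)^{\pf}$ is a cdp-sheaf on $\SchfpSn$. Given a perfect abstract blow-up square in $\PerfpfpS$, Lemma \ref{Lem:abs_lift} exhibits it as the image under perfection of an abstract blow-up square in $\SchfpSn$, on which $F \circ (-)^{\pf}$ satisfies excision by hypothesis; this is exactly the excision of $F$ on the original perfect square.

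The only non-bureaucratic point is the preservation of abstract blow-up squares under perfection in the forward direction; all remaining work has already been done in Lemma \ref{Lem:abs_lift} and Proposition \ref{Prop:pcdp_via_pblups}.
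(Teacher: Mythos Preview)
Your argument is correct and matches the paper's for the converse direction (Lemma~\ref{Lem:abs_lift} together with Proposition~\ref{Prop:pcdp_via_pblups}). For the forward direction the paper takes a slightly more direct route: rather than showing that perfection preserves abstract blow-up squares, it simply observes that $(-)^{\pf}$ preserves completely decomposed families and sends proper maps to perfectly proper maps, so precomposition with $(-)^{\pf}$ carries pcdp-sheaves to cdp-sheaves straight from the covering-family definitions. Your square-by-square approach works as well, but two small points deserve tightening. First, Proposition~\ref{Prop:pcdp_via_pblups} as stated only asserts that the cd-structure \emph{generates} the pcdp-topology; the excision characterization of pcdp-sheaves you invoke additionally requires completeness of that cd-structure, which does hold by the same splitting-sequence argument (Lemma~\ref{Lem:cd_splittings_sequence}) as in the classical case, but is not what the proposition literally says. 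Second, the preservation of the isomorphism $p_0^{-1}(X_0 \setminus Z_0) \to X_0 \setminus Z_0$ under perfection is not a matter of topological invariance in the sense of \cite[Thm.~3.7]{BhattProjectivity}; it follows immediately from functoriality of $(-)^{\pf}$ together with its commutation with pullbacks and open immersions.
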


\begin{proof}   
    One direction follows from the fact that $(-)^{\pf}$ preserves completely decomposed families and sends proper maps to perfectly proper maps. The other direction follows from Proposition \ref{Prop:pcdp_via_pblups} and Lemma \ref{Lem:abs_lift}.
\end{proof}

\begin{Cor}
\label{Cor:cdh_vs_pcdh}
    A presheaf $F \in \PPP(\PerfpfpS)$ is a pcdh-sheaf if and only if $F((-)^{\pf}) \in \PPP(\SchfpSn)$ is cdh-sheaf. Likewise for cosheaves.
\end{Cor}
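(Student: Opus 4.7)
The plan is to combine the two immediately preceding sheaf comparison results (Proposition~\ref{Prop:pNis_vs_Nis} and Proposition~\ref{Prop:pcdp_vs_cdp}) using the fact that both the cdh- and pcdh-topologies are generated as joins of their Nisnevich and cdp/pcdp parts.

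First I would recall that, by definition (Definition~\ref{Def:pcdh-topology}), the pcdh-topology is the join of the perfect Nisnevich and pcdp topologies on $\PerfpfpS$, and analogously the cdh-topology on $\SchfpSn$ is the join of the Nisnevich and cdp topologies. Since a presheaf is a sheaf for a join of topologies if and only if it is a sheaf for each of the generating topologies, we have
\[ F \in \Sh_{\pcdh}(\PerfpfpS) \iff F \in \Sh_{\Nis}(\PerfpfpS) \text{ and } F \in \Sh_{\pcdp}(\PerfpfpS), \]
and similarly a presheaf $G$ on $\SchfpSn$ is a cdh-sheaf if and only if it is both a Nisnevich sheaf and a cdp-sheaf.

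Applying this observation to $G \coloneqq F((-)^{\pf})$, Proposition~\ref{Prop:pNis_vs_Nis} identifies the Nisnevich sheaf condition on $G$ with the perfect Nisnevich sheaf condition on $F$, and Proposition~\ref{Prop:pcdp_vs_cdp} does the same with cdp versus pcdp. Chaining these equivalences gives
\[ F \in \Sh_{\pcdh}(\PerfpfpS) \iff F((-)^{\pf}) \in \Sh_{\cdh}(\SchfpSn), \]
which is exactly the claim.

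For the cosheaf version, the same argument applies verbatim after dualising: cosheaves for a join of topologies are characterised by being cosheaves for each of the generating topologies, and Propositions~\ref{Prop:pNis_vs_Nis}~and~\ref{Prop:pcdp_vs_cdp} have evident cosheaf analogues proved in exactly the same way (the proofs only use that perfection preserves/lifts the generating squares, together with the excision characterisation from Lemma~\ref{Lem:pfNis_excission} and Proposition~\ref{Prop:pcdp_via_pblups}, which are self-dual in nature). There is no genuine obstacle here; the corollary is a purely formal consequence of the two preceding propositions once one notes the join-of-topologies description on both sides.
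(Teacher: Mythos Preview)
Your proposal is correct and follows essentially the same approach as the paper's own proof, which simply reads ``Combine Proposition~\ref{Prop:pNis_vs_Nis} and Proposition~\ref{Prop:pcdp_vs_cdp} for the first claim. For cosheaves a dual argument goes through.'' You have merely made explicit the underlying fact that sheaves for a join of topologies are exactly the presheaves that are sheaves for each generating topology, which is what is implicitly being used.
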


\begin{proof}
        Combine Proposition \ref{Prop:pNis_vs_Nis} and Proposition \ref{Prop:pcdp_vs_cdp} for the first claim. For cosheaves a dual argument goes through.
\end{proof}

\subsection{Hypercompleteness}
We finish this section showing that the $\infty$-topos of $\pcdh$-sheaves over perfect schemes of finite valuative dimension has (locally) finite homotopy dimension (Theorem~\ref{Thm:idh-topology-finite-htpy-dim}). The proof follows almost verbatim Elmanto--Hoyois--Iwasa--Kelly's proof of the analogous result for the cdh-topology \cite[\S 2]{ehik--milnor-excision}, just by replacing everything by its perfection. For Krull dimensions, this is not surprising as perfection does not change the underlying topological space. But also for the valuative dimension, nothing changes (Corollary~\ref{Cor:vdim=dim-of-ZR}). For basic facts about the valuative dimension we refer to \cite[\S 2.3]{ehik--milnor-excision}.

\begin{Def}
    Let $X\in\PerfFp$ be quasi-integral\footnote{Meaning that $X$ is reduced and locally has only finitely many generic points. In this case, the subspace $X^\mathrm{gen}$ of generic points (of irreducible components) of $X$ is discerete.} of finite valuative dimension $d$. A \emph{perfect modification} of $X$ is a perfectly proper morphism $Y\to X$ in $\PerfFp$ which induces a bijection $Y^\mathrm{gen} \to[\sim] X^\mathrm{gen}$ and isomorphisms on residue fields of generic points.\footnote{Note that $Y$ is automatically reduced as perfect scheme.} Denote by $\Mdf^\pf(X)$ the full subcategory of $\Sch_X$ spanned by perfect modifications. 
    We define the \emph{perfect Zariski--Riemann space} of $X$ as the limit 
    \[ \ZR^\pf(X) \coloneqq \lim_{Y\in\Mdf^\pf(X)} Y \]
    within the category of locally ringed spaces.
\end{Def}

\begin{Rem}
\label{Rem:Mdf_vs_Mdfpf}
    Let $f \colon Y \to X$ be a perfect modification, and take a finitely presented---hence proper---model $f_0 \colon Y_0 \to X$ \cite[Cor.~3.15]{BhattProjectivity}. Then $f_0$ is a modification, hence induces an isomorphism over a dense open subset of $X$. Since $Y \to Y_0$ is a universal homeomorphism, the same is true for $f$. Since $Y$ is reduced, morphisms $Y \to Y'$ to any scheme $Y'$ separated over $X$ are determined by their restriction to any dense open subset of $Y$. In particular, $\Mdf^{\pf}(X)$ is a poset.
\end{Rem}     

Consider the functor
    \[ (-)^\pf \colon \Mdf(X) \to \Mdf^{\pf}(X). \]
By Remark \ref{Rem:Mdf_vs_Mdfpf}, this functor is surjective. It is also injective, since a morphism $Y_0 \to Y_0'$ between modifications of $X$ that is an isomorphism on perfections is an isomorphism on underlying topological spaces, hence an isomorphism in $\Mdf(X)$. Since $\Mdf(X)$ and $\Mdf^{\pf}(X)$ are posets, $(-)^{\pf}$ is an equivalence of categories. In particular, the canonical morphism
    \[ \ZR^{\pf}(X) \to \ZR(X) \]
of ringed spaces---induced by taking perfections---is a homeomorphism on underlying topological spaces.

\begin{Cor}
\label{Cor:vdim=dim-of-ZR}
    The valuative dimension of $X$ equals the Krull dimension of $\ZR^{\pf}(X)$.
\end{Cor}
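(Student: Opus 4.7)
The plan is to combine two ingredients. First, the classical identification of the valuative dimension with the Krull dimension of the Zariski--Riemann space, which is already recorded in \cite[\S 2.3]{ehik--milnor-excision}: for a quasi-integral scheme $X$ of finite valuative dimension, one has $\mathrm{vdim}(X) = \dim \ZR(X)$. Second, the discussion immediately preceding the corollary establishes that taking perfections induces an equivalence of posets $\Mdf(X) \xrightarrow{\sim} \Mdf^{\pf}(X)$, and that the induced morphism of locally ringed spaces
\[ \ZR^{\pf}(X) \To \ZR(X) \]
is a homeomorphism of underlying topological spaces.

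Given this, the proof is essentially a one-liner: Krull dimension depends only on the underlying topological space, so the homeomorphism gives $\dim \ZR^{\pf}(X) = \dim \ZR(X)$, which by the classical fact equals $\mathrm{vdim}(X)$.

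The only point one might need to double-check is that the cofiltered limit defining $\ZR^{\pf}(X)$ in locally ringed spaces computes on underlying topological spaces the inverse limit of the spaces of the $Y \in \Mdf^{\pf}(X)$, and likewise for $\ZR(X)$; this is standard since the transition maps are affine (in fact perfectly proper with reduced targets), and filtered limits of spectral spaces along spectral maps commute with passage to the underlying topological space. Then the homeomorphism is induced termwise by the identifications $|Y| \cong |Y_0|$ for $Y = Y_0^{\pf}$, which come from topological invariance of the small Zariski site under universal homeomorphisms. This is not really a separate obstacle, only a sanity check on the setup; no novel calculation is required.
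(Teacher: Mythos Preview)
Your proposal is correct and follows essentially the same route as the paper: the paper's proof is a one-liner invoking \cite[Prop.~2.3.8]{ehik--milnor-excision}, relying on the homeomorphism $\ZR^{\pf}(X) \to \ZR(X)$ established in the preceding discussion, exactly as you do. Your additional sanity check about the underlying topological space of the cofiltered limit is harmless but not needed, since that homeomorphism is taken as already established before the corollary.
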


\begin{proof}
    This now follows from \cite[Prop.~2.3.8]{ehik--milnor-excision}.
\end{proof}

\begin{Rem}
    We consider the site $\ZR^\pf_\mathrm{Nis}(X)$ defined as the filtered colimit of the small Nisnevich sites $Y_{\Nis}$ for all perfect modifications $Y\in\Mdf^\pf(X)$. Note that all schemes in $Y_{\Nis}$ are perfect \cite[Prop.~3.1]{BarwickTopological}. By design, we have that
    \[ \Sh_\Nis(\ZR^\pf(X)) \coloneqq \Sh(\ZR^\pf_\Nis(X)) \simeq \lim_{Y\in\Mdf^\pf(X)} \Sh_\Nis(Y). \]
    Now let $Y\in\Mdf^\pf(X)$. Note that $\Sh_\Nis(Y)$ has homotopy dimension $\leq d$ since $Y$ has finite Krull dimension $\leq d$  \cite[Thm.~3.18]{clausen-mathew--hyperdescent}. Thus the limit $\Sh_\Nis(\ZR^\pf(X))$ has homotopy dimension $\leq d$ \cite[Cor.~3.11]{clausen-mathew--hyperdescent}.
\end{Rem}

\begin{Thm}
\label{Thm:idh-topology-finite-htpy-dim}
    Let $X\in\PerfFp$ be of finite valuative dimension $d$. Then the $\infty$-topos $\Sh_\mathrm{\pcdh}(\Perfpfp_X)$ has finite homotopy dimension $\leq d$.
    Hence $\Sh_\mathrm{\idh}(\Sch_X^\fp)$ has finite homotopy dimension $\leq d$.
\end{Thm}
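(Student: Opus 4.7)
The plan is to adapt Elmanto--Hoyois--Iwasa--Kelly's proof of the analogous result for the cdh-topology \cite[\S 2.4]{ehik--milnor-excision} essentially verbatim to the perfect setting, which is feasible because the two geometric ingredients needed are already in hand: Corollary~\ref{Cor:vdim=dim-of-ZR} identifies $d$ with the Krull dimension of $\ZR^\pf(X)$, and the Remark preceding the theorem uses Clausen--Mathew's stability result for homotopy dimension under cofiltered limits to conclude that the limit topos $\Sh_\Nis(\ZR^\pf(X)) \simeq \lim_{Y \in \Mdf^\pf(X)} \Sh_\Nis(Y)$ has homotopy dimension $\leq d$.

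The core technical step is the perfect analogue of \cite[Lem.~2.4.14]{ehik--milnor-excision}: any pcdh-covering sieve of $X \in \Perfpfp_X$ can be refined, after base change along some perfect modification $Y \to X$, to a perfect Nisnevich covering sieve on $Y$. By the cd-structure description of Proposition~\ref{Prop:pcdp_via_pblups}, one reduces to the two building blocks: perfect Nisnevich squares, which are preserved by arbitrary base change, and perfect abstract blow-up squares $\{Y' \to X,\, Z \to X\}$, which are trivially refined on the modification $Y' \to X$ itself. The inductive combination of Nisnevich and blow-up steps follows EHIK's argument unchanged; it transports to the perfect setting by lifting perfect Nisnevich squares and perfect abstract blow-up squares to the classical setting via Lemma~\ref{Lem:Nis_square_lift} and Lemma~\ref{Lem:abs_lift}, running the classical refinement argument there, and applying $(-)^\pf$.

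With the refinement lemma, one concludes exactly as in loc.\ cit.: any $d$-connective $F \in \Sh_\pcdh(\Perfpfp_X)$ restricts to a $d$-connective Nisnevich sheaf on $\ZR^\pf(X)$ (which is a sheaf by the Nisnevich-compatibility part of pcdh), and hence admits a global section by the homotopy dimension bound; this section descends, through the cofiltered system $\{Y\}_{Y \in \Mdf^\pf(X)}$ defining $\ZR^\pf_\Nis(X)$, to a section of $F$ over some perfect modification $Y \to X$, and the refinement lemma turns this into a section of $F$ over $X$ itself. The final claim for $\Sh_\idh(\Sch^\fp_X)$ is then immediate from Corollary~\ref{Cor:idh=pcdh}.

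The main obstacle is verifying the refinement lemma. Although EHIK's induction carries over in outline, one must check carefully that each intermediate construction (platification, blowing up along suitable closed centers) preserves perfect finite presentation. This is handled by lifting via Lemma~\ref{Lem:abs_lift} to finitely presented schemes over $\SchfpSn$---where the classical results are directly available---and then perfecting the resulting data back; the fact that $(-)^\pf$ preserves modifications, completely decomposed families, and abstract blow-up squares ensures that the output of the classical argument yields the sought refinement in $\Perfpfp_X$.
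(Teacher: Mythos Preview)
Your sketch departs from the paper's proof. The paper transports EHIK's argument through the clean-presheaf machinery: it introduces the category $\mathrm{GFin}_X^{\pfp}\subset\Perfpfp_X$ of generically finite $X$-schemes, the cleaning left adjoint $\mathrm{cl}$, the equivalence $\Sh_\pcdh^{\cl}(\mathrm{GFin}_X^{\pfp}) \simeq \Sh_\Nis^{\cl}(\mathrm{GFin}_X^{\pfp})$ (the perfect analogue of \cite[Prop.~2.4.10]{ehik--milnor-excision}), and the resulting composite $\mathrm{cl}_\pcdh \colon \Sh_\pcdh(\Perfpfp_X) \to \Sh_\Nis(\ZR^\pf(X))$. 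None of this appears in your outline; you instead rely on a covering-refinement lemma and naive restriction of $F$.

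There is a genuine gap. The step ``a $d$-connective $F$ restricts to a $d$-connective Nisnevich sheaf on $\ZR^\pf(X)$'' does not follow from a statement about refining covering sieves: $d$-connectivity in the pcdh-topos means the \emph{pcdh}-sheafified homotopy groups of $F$ vanish below degree $d$, whereas after restricting to the small Nisnevich site of a modification $Y$ one only controls the Nisnevich sheafification, which is coarser; there is no reason the restricted sheaf is Nisnevich-$d$-connective, even in the limit over all $Y$. In EHIK it is precisely the clean-presheaf equivalence that identifies the two sheaf theories on $\mathrm{GFin}$ and makes the connectivity transfer go through; you have not supplied a replacement. Your closing step---promoting a section over some $Y$ to a section over $X$ ``via the refinement lemma''---is likewise unexplained: $Y\to X$ is not a monomorphism, so $F(X)$ is the totalization of the full \v{C}ech nerve, and a bare section over $Y$ does not descend. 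As a smaller point, in a general perfect abstract blow-up square the map $Y'\to X$ need not be a modification (the centre $Z$ may meet the generic locus, or $Y'$ may acquire extra components over $Z$), so even the base case of your refinement lemma already needs the quasi-integral reduction that the $\mathrm{GFin}$ formalism handles.
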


\begin{proof}[Proofsketch]
    This can be done using the same strategy as in \cite[\S 2.4]{ehik--milnor-excision} and replacing every object with a perfect analogue.
    
    Namely, first assume $X$ is quasi-integral, and define $\mathrm{GFin}_X^{\pfp}$ as the full subcategory of $\Perf_X^\pfp$ which is spanned by $X$-schemes whose fibres over the generic points of $X$ are finite. Then the inclusion of clean presheaves \cite[Def.~2.4.6]{ehik--milnor-excision} on $\mathrm{GFin}_X^{\pfp}$ into all presheaves admits a left-adjoint
    \[ \mathrm{cl} \colon \PPP(\mathrm{GFin}_X^{\pfp}) \to \PPP^\mathrm{cl}(\mathrm{GFin}_X^{\pfp}) \]
    which is given by the formula $Y\mapsto \colim_{Y'\to Y} F(Y')$ where $Y'$ runs over all clean perfect $Y$-schemes in $\mathrm{GFin}_X^{\pfp}$. This functor sends Nisnevich sheaves to pcdh-sheaves, cf.\ \cite[Prop.~2.4.12]{ehik--milnor-excision}.
    Thus one can define a functor $\mathrm{cl}_\mathrm{pcdh} \colon \Sh_\mathrm{pcdh}(\Perf_X^\pfp) \to \Sh_\mathrm{Nis}(\ZR^\pf(X))$ given as the composition
    \begin{align*}
        \Sh_\mathrm{pcdh}(\Perf_X^\pfp) &\to[\mathrm{res}] \Sh_\mathrm{pcdh}(\mathrm{GFin}_X^{\pfp}) \to[\mathrm{cl}] \Sh_\mathrm{pcdh}^\mathrm{cl}(\mathrm{GFin}_X^{\pfp})  \\
        &\simeq \Sh_\Nis^\mathrm{cl}(\mathrm{GFin}_X^{\pfp}) \to[\rho] \Sh_\mathrm{Nis}(\ZR^\pf(X))
    \end{align*}
    where the equivalence is the perfect analogue of \cite[Prop.~2.4.10]{ehik--milnor-excision}, and $\rho$ is given as the right-lax limit of the restriction (i.e. base change) functors along all perfect modifications of $X$, see construction on \cite[p.~1028]{ehik--milnor-excision}. Now the same proof as for \cite[Thm.~2.4.15]{ehik--milnor-excision} works, and likewise for the general case where $X$ is not assumed to be quasi-integral.
\end{proof}

\begin{Cor} \label{Cor:pcdh-topos-hypercomplete}
    Let $X \in \PerfFp$ be of finite valuative dimension. Then the $\infty$-topos $\Sh_\mathrm{pcdh}(\Perf_X^\pfp)$ is hypercomplete. In particular, the Postnikov towers in these topoi are convergent. 
\end{Cor}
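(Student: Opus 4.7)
The plan is to deduce this corollary directly from Theorem~\ref{Thm:idh-topology-finite-htpy-dim} together with standard facts about $\infty$-topoi of finite homotopy dimension due to Lurie. Since this is purely a formal consequence, the proof proposal is brief.

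First, I would invoke Theorem~\ref{Thm:idh-topology-finite-htpy-dim} to conclude that $\Sh_\pcdh(\Perfpfp_X)$ has homotopy dimension $\leq d$, where $d$ is the (finite) valuative dimension of $X$.

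Next, I would apply the general result of Lurie that any $\infty$-topos of locally finite homotopy dimension is hypercomplete (see \cite[Cor.~7.2.1.12]{LurieHTT}). Since finite homotopy dimension is in particular locally finite, this yields hypercompleteness of $\Sh_\pcdh(\Perfpfp_X)$.

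Finally, the convergence of Postnikov towers is a formal consequence of hypercompleteness combined with finite homotopy dimension: in an $\infty$-topos of finite homotopy dimension, Postnikov towers converge (see \cite[Prop.~7.2.1.10]{LurieHTT}). Alternatively one can note that hypercompleteness alone suffices once Postnikov towers are known to be \emph{defined}, and the finite homotopy dimension ensures the requisite bounds on truncations.

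I do not expect any real obstacle here; the content is entirely in Theorem~\ref{Thm:idh-topology-finite-htpy-dim}, and the corollary is simply an application of well-established $\infty$-topos theory. The only minor point to be careful about is that ``finite homotopy dimension'' as used here matches Lurie's definition (which it does, upon unwinding), so that the cited results apply verbatim.
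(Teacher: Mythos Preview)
Your approach is essentially the same as the paper's: invoke Theorem~\ref{Thm:idh-topology-finite-htpy-dim} and then apply Lurie's results on $\infty$-topoi of finite homotopy dimension, exactly as the paper does (citing the analogous argument in \cite[Cor.~2.4.16]{ehik--milnor-excision} and \cite[Prop.~7.2.1.10]{LurieHTT}). One small imprecision: the assertion that ``finite homotopy dimension is in particular locally finite'' is not a general fact about $\infty$-topoi; rather, one argues that for every $X' \in \Perfpfp_X$ the valuative dimension of $X'$ is again finite, so Theorem~\ref{Thm:idh-topology-finite-htpy-dim} applies to each slice $\Sh_{\pcdh}(\Perfpfp_X)_{/X'} \simeq \Sh_{\pcdh}(\Perfpfp_{X'})$ separately, yielding the \emph{locally} finite homotopy dimension needed for \cite[Prop.~7.2.1.10]{LurieHTT}.
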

\begin{proof}
    The same argument as in \cite[Cor.~2.4.16]{ehik--milnor-excision} goes through. The crucial ingredient is that an $\infty$-topos which is locally of finite homotopy dimension is hypercomplete, which is due to Lurie \cite[Prop.~4.1.5]{LurieOnInftyTopoi}.
    The last part is \cite[Prop.~7.2.1.10]{LurieHTT}.
\end{proof}

\section{The idh-topology}
\label{Sec:idh}
In this section we will describe a lift of the pcdh-topology along the perfection map $\SchfpSn \to \PerfpfpS$ such that the associated topoi become equivalent (Corollary~\ref{Cor:idh=pcdh}).  
In particular, the $\idh$-topos is hypercomplete so that we can detect equivalences on stalks which are precisely the spectra of perfect Henselian valuation rings (Proposition~\ref{Prop:points-of-idh-topology}).
Since universal homeomorphisms induce purely inseparable field extensions on residue fields, the basic idea is to replace the completely decomposed condition by the following: 
\begin{Def}
    Call a family $\{X_i \to Y\}_{i \in I}$ \emph{inseparably decomposed} if  for each $y \in Y$ there is some $i \in I$ and $x \in X_i$ over $y$ such that the induced field extension $\kappa(y) \subset \kappa(x)$ is purely inseparable. 
\end{Def}

\begin{Exm}
    Since \'{e}tale morphisms induce separable field extensions on residue fields, the inseparably decomposed \'{e}tale topology recovers the Nisnevich topology.
\end{Exm}

\begin{Def} \label{Def:idh-topology}
    The \emph{idp-topology} on $\SchfpSn$ is generated by inseparably decomposed families of proper maps. The \emph{idh-topology} on $\SchfpSn$ is generated by the idp-topology and the Nisnevich topology. 
\end{Def}

\begin{Lem}
\label{Lem:idh_to_pcdh}
    Any idh-covering family in $\SchfpSn$ is sent to a pcdh-covering family in $\PerfpfpS$ under perfection.
\end{Lem}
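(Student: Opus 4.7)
The plan is to reduce the statement to the two pretopologies generating the idh-topology. By Definition~\ref{Def:idh-topology}, the idh-topology on $\SchfpSn$ is generated by the Nisnevich topology and the idp-topology, and similarly the pcdh-topology is generated by the (perfect) Nisnevich topology and the pcdp-topology. Since perfection preserves fibre products (being a right adjoint), it suffices to verify that the perfection of a Nisnevich covering family is a perfect Nisnevich covering family, and that the perfection of an inseparably decomposed family of proper maps is a completely decomposed family of perfectly proper maps.

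For a Nisnevich cover $\{U_i \to X\}_{i \in I}$, I would first invoke Lemma~\ref{Lem:Bhatt.3.4} to conclude that each perfection $U_i^\pf \to X^\pf$ remains étale. To verify that the completely decomposed condition is preserved, I would use that the perfection morphism $X^\pf \to X$ is a universal homeomorphism, so points of $X^\pf$ correspond bijectively to points of $X$, and that the residue field of $X^\pf$ at a point $x^\pf$ is the perfect closure $\kappa(x)^\pf$ (this follows directly from the description of perfection as the colimit of iterated Frobenii). Thus an isomorphism of residue fields $\kappa(x) \xrightarrow{\sim} \kappa(u)$ at a point $u$ of some $U_i$ passes to an isomorphism $\kappa(x)^\pf \xrightarrow{\sim} \kappa(u)^\pf$ after perfection.

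For an idp-cover $\{Y_i \to X\}_{i \in I}$ of proper maps, the maps $Y_i^\pf \to X^\pf$ are perfectly proper by the very definition of the latter. The essential point is that the inseparably decomposed condition upgrades to completely decomposed after perfection: if $\kappa(x) \subset \kappa(y)$ is purely inseparable, then the two perfect closures coincide, so the induced residue field extension at the corresponding points of the perfection is an isomorphism. Hence the perfection is a pcdp-cover.

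The only (very mild) technical input is the identification of the residue fields of $X^\pf$ with the perfect closures of those of $X$, which is immediate from the construction $X^\pf = \lim(\cdots \to X \to X)$ along Frobenii. Given this, both verifications are essentially formal, so I expect no serious obstacle.
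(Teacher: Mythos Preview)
Your proposal is correct and follows essentially the same approach as the paper's proof: reduce to the generating families, observe that perfection preserves Nisnevich covers, and show that inseparably decomposed proper families become completely decomposed perfectly proper families after perfection. The paper's proof is terser and leaves implicit the details you spell out (étaleness via Lemma~\ref{Lem:Bhatt.3.4}, the residue-field computation, and preservation of fibre products), but the argument is the same.
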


\begin{proof}
    Let $\{X_i \to Y\}_i$ be an inseparably decomposed family of proper maps in $\SchfpSn$. Then $\{X_i^{\pf} \to Y^{\pf}\}_i$ is a completely decomposed family of perfectly proper maps. Since perfection preserves Nisnevich covering families, the claim follows.
\end{proof}

Recall that the \emph{h-topology} on $\SchFp$ is generated by finite families of universally subtrusive maps of finite presentation.\footnote{Note that in the Noetherian case a morphism is universally subtrusive if and only if it is unversally submersive \cite[Thm.~2.8]{RydhSubmersions}. In this case the stated definition of the h-topology thus coincides with the one from \cite{VoevodskyHomology}, but this is false in general. Without the finite presentation condition, this is called the v-topology \cite{BhattProjectivity}.}

\begin{Lem}
\label{Lem:cdh_idh_h}
    The idh-topology on $\SchfpSn$ is finer than the cdh-topology and coarser than the h-topology. 
\end{Lem}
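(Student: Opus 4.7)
The plan is to verify the two inclusions separately by checking them on the generating covering families of each topology.

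For $\cdh \subset \idh$, both topologies contain the Nisnevich topology by definition, so it suffices to compare the second ingredient. Every cdp-covering family is automatically an idp-covering family, because any isomorphism $\kappa(y) \xrightarrow{\sim} \kappa(x)$ of residue fields is in particular a purely inseparable field extension; hence any completely decomposed family of proper maps is inseparably decomposed, and the first inclusion follows.

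For $\idh \subset \h$, I would check that both types of generating idh-covers are h-covers. Nisnevich covers are h-covers since \'etale morphisms are universally subtrusive and of finite presentation, and open covers of qcqs schemes admit finite subcovers. For an idp-cover $\{f_i \colon X_i \to Y\}_{i \in I}$, the inseparable-decomposition assumption forces the family to be surjective on points, and each $f_i$ is proper, hence universally closed and of finite presentation in $\SchfpSn$. The plan is to extract a finite subfamily $I_0 \subset I$ such that $\coprod_{i \in I_0} X_i \to Y$ remains surjective; this disjoint union is then a single proper surjective morphism of finite presentation, hence universally subtrusive (proper surjections satisfy the valuative lifting criterion for specializations), which provides an h-cover sitting inside the sieve generated by the original idp-family.

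The main obstacle is producing such a finite subfamily in the general qcqs setting, since images of finitely presented proper morphisms are closed but need not be constructible without a Noetherian hypothesis. My plan is to reduce to the Noetherian case via Remark~\ref{Rem:Noetherian_induction}: writing $S_0$ as a cofiltered limit of Noetherian $\Fp$-schemes $S_{0\alpha}$ lets one descend $Y$ and any finite chunk of the family to some level $\alpha$, and in the Noetherian setting the underlying topological space of $Y$ has only finitely many irreducible components, so selecting one $f_i$ per generic point yields the desired finite surjective subfamily. Once this finite subfamily is obtained, the remaining verification that the disjoint union is universally subtrusive and of finite presentation is formal.
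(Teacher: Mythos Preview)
Your first inclusion ($\cdh \subset \idh$) is correct and is exactly what the paper does: completely decomposed implies inseparably decomposed, and both contain Nisnevich.

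For the second inclusion, the paper takes a shortcut you do not: it simply cites \cite[Rem.~8.3]{RydhSubmersions} for the fact that the idp-topology is coarser than the h-topology, after noting Nisnevich covers are h-covers. Your plan to argue directly---extract a finite surjective subfamily, then use that a proper surjective finitely presented map is universally subtrusive---is a legitimate alternative route, and is essentially what underlies Rydh's remark.

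However, your ``main obstacle'' is not an obstacle, and your proposed workaround is both unnecessary and, as written, circular. Chevalley's theorem does hold without Noetherian hypotheses: the image of a morphism of finite presentation between qcqs schemes is constructible \citestacks{054J}. Morphisms in $\SchfpSn$ are of finite presentation by cancellation, so each image $f_i(X_i) \subset Y$ is closed \emph{and} constructible. Since the underlying space $\lvert Y \rvert$ is spectral, the constructible topology is compact and the $f_i(X_i)$ are clopen there; hence a finite subfamily already covers. This gives your finite $I_0$ directly. By contrast, your Noetherian reduction asks to ``descend any finite chunk of the family,'' but choosing that chunk is precisely the problem you are trying to solve; and the generic points of the Noetherian model $Y_\alpha$ do not obviously correspond to anything useful in $Y$ before you know which $f_i$'s to descend.
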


\begin{proof}
    The first point is immediate from the definitions. For the second point, since Nisnevich covers are h-covers, it suffices to show that the idp-topology is coarser than the h-topology. This is \cite[Rem.~8.3]{RydhSubmersions}.
\end{proof}

\begin{Lem}
\label{Lem:Lh_via_pf}
    For $T,X \in \SchfpSn$ it holds $\Lh(X)(T) \simeq \Hom_{\SchSn}(T^{\pf},X)$.
\end{Lem}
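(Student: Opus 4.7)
The plan is to exhibit the presheaf $G \colon T \mapsto \Hom_{\SchSn}(T^\pf, X)$ on $\SchfpSn$ as the h-sheafification of the representable presheaf of $X$. There are two things to check: that $G$ is an h-sheaf, and that it satisfies the universal property of h-sheafification under the map from $X$.

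The natural transformation $X \to G$ is induced by precomposition with the perfection morphism $T^\pf \to T$, which is a universal homeomorphism (Lemma~\ref{Lem:Bhatt.3.8}).

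To show $G$ is an h-sheaf, I factor it through the perfection. By the adjunction between the inclusion $\Perf_S \hookrightarrow \Sch_{S_0}$ and perfection, for $W \in \PerfpfpS$ we compute
\[
\Hom_{\SchSn}(W, X) \simeq \Hom_{\Perf_S}(W, X^\pf),
\]
so writing $G = G' \circ (-)^\pf$, the presheaf $G'$ is the representable $h_{X^\pf}$ on $\PerfpfpS$ (note $X^\pf \in \PerfpfpS$ by Remark~\ref{Rem:fp_rfp_pfp}). By Corollary~\ref{Cor:pcdh-subcanonical} this is a pcdh-sheaf; being representable by a scheme it is also fpqc (hence étale) sheaf. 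Since perfect schemes are reduced, these two combine to make $h_{X^\pf}$ an h-sheaf on $\PerfpfpS$. Because $(-)^\pf$ preserves h-covers and fibre products, $G = h_{X^\pf} \circ (-)^\pf$ is an h-sheaf on $\SchfpSn$.

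For the universal property, let $F$ be an h-sheaf on $\SchfpSn$ with a map $X \to F$; we produce a unique factorization $G \to F$. Extend $F$ continuously to qcqs $\F_p$-schemes along cofiltered limits with affine transitions. Writing $T^\pf = \lim_n T_n$ as the iterated Frobenius limit, each transition $T_{n+1} \to T_n$ is a universal homeomorphism, hence becomes an equivalence on $F$; this gives $F(T^\pf) \simeq F(T)$. Similarly, since $X$ is finitely presented, $X(T^\pf) = \colim_n X(T_n) \simeq \Hom_{\SchSn}(T^\pf, X) = G(T)$. Composing provides the required map $G(T) = X(T^\pf) \to F(T^\pf) = F(T)$, which is unique by naturality.

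The main obstacle I anticipate is verifying h-subcanonicity on the perfect site $\PerfpfpS$: while pcdh-subcanonicity (Corollary~\ref{Cor:pcdh-subcanonical}) and fpqc-subcanonicity of schemes are standard, combining them into h-subcanonicity uses that perfect schemes are reduced, so that proper descent for scheme-representable functors is available---this is in the spirit of Rydh's descent results for awn (equivalently, in characteristic $p$, perfect) schemes.
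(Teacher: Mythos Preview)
The paper's proof is a one-line citation to Rydh \cite[Thm.~8.16]{RydhSubmersions}, which computes the h-sheafification of a representable presheaf in characteristic $p$ as $T \mapsto \Hom(T^{\pf}, X)$. Your approach attempts to reprove this from scratch, and as written it is circular: you invoke Corollary~\ref{Cor:pcdh-subcanonical} to conclude that $h_{X^\pf}$ is a pcdh-sheaf, but in the paper that corollary is \emph{deduced from} Lemma~\ref{Lem:Lh_via_pf}. Look at its proof: it identifies $\Hom_{\PerfpfpS}(T,X)$ with $\Lh(X_0)(T_0)$ precisely via the present lemma, then uses that h-sheaves are idh-sheaves.

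Even setting the circularity aside, the step where you assert that pcdh-descent plus fpqc-descent combine to give h-descent on perfect schemes is exactly the hard content; you correctly flag it as ``the main obstacle,'' and it is precisely what Rydh's theorem supplies. One can bypass pcdh-subcanonicity by arguing directly that $\Hom(-,X)$ is a v-sheaf on all qcqs schemes (hence satisfies descent along the perfections $\{Y_i^{\pf} \to Y^{\pf}\}$ of an h-cover, which are still universally subtrusive), but that again is Rydh's descent result (or the later Bhatt--Mathew arc-descent). Your second half---the universal property, using that h-sheaves invert universal homeomorphisms via the forward direction of Lemma~\ref{Lem:idh-equivalences=universal-homeomorphisms}, which does \emph{not} rely on the present lemma---is fine. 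The upshot: your outline can be made to work, but only by importing the same descent input the paper cites, so the clean move is simply to cite Rydh directly.
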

\begin{proof}
    This is a reformulation of \cite[Thm~8.16]{RydhSubmersions}.
\end{proof}

\begin{Lem}[cf.~{\cite[Prop.~3.2.5]{VoevodskyHomology}}] \label{Lem:idh-equivalences=universal-homeomorphisms}
A morphism $f \colon X\to Y$ in $\SchfpSn$ is a universal homeomorphism if and only if  $\mathrm{L}_{\idh}(f)$ is an equivalence in $\Sh_{\idh}(\SchfpSn)$.
\end{Lem}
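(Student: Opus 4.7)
The plan is to prove the two implications separately: the ``only if'' by a direct analysis of the Čech nerve of $f$, and the ``if'' by reducing to the analogous statement for the h-topology via Lemma~\ref{Lem:Lh_via_pf}.

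First I would assume that $f \colon X \to Y$ is a universal homeomorphism. Since universal homeomorphisms are integral \citestacks{04DF} and $f$ is of finite type, $f$ is finite, hence proper and separated. Moreover, for each $y \in Y$ the unique $x \in X$ above $y$ satisfies $\kappa(y) \subset \kappa(x)$ purely inseparable, so $\{f\}$ is an idp-cover, and thus $\mathrm{L}_{\idh}(f)$ is at least an effective epimorphism in $\Sh_{\idh}(\SchfpSn)$. To upgrade this to an equivalence, I would show that its diagonal, namely $\mathrm{L}_{\idh}(\Delta_f)$, is also an equivalence. Because $f$ is separated and universally injective, $\Delta_f \colon X \to X \times_Y X$ is a surjective closed immersion, i.e., a nilimmersion. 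The key observation is then that any nilimmersion $i \colon Z \hookrightarrow W$ in $\SchfpSn$ yields an equivalence $\mathrm{L}_{\idh}(i)$: it is a singleton cdp-cover (hence idh-cover), and its own diagonal is the identity since closed immersions are monomorphisms, so the associated Čech nerve is constant. Applying this to $\Delta_f$ settles the ``only if'' direction.

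Conversely, assuming $\mathrm{L}_{\idh}(f)$ is an equivalence, Lemma~\ref{Lem:cdh_idh_h} tells us that the h-topology is finer than the idh-topology, so h-sheaves form a full subcategory of idh-sheaves and the h-sheafification functor factors through the idh-sheafification. Hence $\mathrm{L}_h(f)$ is an equivalence. By Lemma~\ref{Lem:Lh_via_pf}, this means that the map
\[ \Hom_{\SchSn}(T^\pf, X) \to \Hom_{\SchSn}(T^\pf, Y) \]
is bijective for every $T \in \SchfpSn$. By the adjunction between inclusion and perfection, this is the same as
\[ \Hom_{\Perf_S}(T^\pf, X^\pf) \to \Hom_{\Perf_S}(T^\pf, Y^\pf) \]
being bijective for all $T \in \SchfpSn$. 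By Proposition~\ref{Prop:perfection_localization}, every object of $\PerfpfpS$ is of the form $T^\pf$ for some $T \in \SchfpSn$, so the Yoneda lemma on $\PerfpfpS$ gives that $f^\pf$ is an isomorphism. Lemma~\ref{Lem:Bhatt.3.8} then concludes that $f$ is a universal homeomorphism.

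The main obstacle I anticipate is verifying that nilimmersions become equivalences in the idh-topos, which underpins the forward direction. As sketched, I expect this to rest on the elementary fact that the Čech nerve of a closed immersion is trivial, so once we recognize a nilimmersion as an idh-cover, the resulting effective epimorphism is automatically a monomorphism and hence an equivalence.
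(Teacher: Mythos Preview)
Your proposal is correct and follows essentially the same approach as the paper's proof: for the forward direction you show $f$ is an idp-cover (hence an effective epimorphism after sheafification) and that its diagonal is a surjective closed immersion whose idh-image is invertible (making $\mathrm{L}_{\idh}(f)$ a monomorphism), while for the converse you pass to the finer h-topology and invoke Lemma~\ref{Lem:Lh_via_pf} together with Lemma~\ref{Lem:Bhatt.3.8}. Your version spells out a few steps more explicitly (e.g.\ the Yoneda argument on $\PerfpfpS$ to conclude $f^\pf$ is an isomorphism), but the substance is identical.
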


\begin{proof}
    Let $f \colon X \to Y$ be a universal homeomorphism in $\SchfpSn$. Then $f$ is separated and universally closed, hence proper. Since $f$ is universally injective, it is radicial, and thus an idp-cover  \cite[\href{https://stacks.math.columbia.edu/tag/01S2}{Tag 01S2}]{stacks-project}. In particular, $\Lidh(f)$ is an effective epimorphism. 
    The diagonal $\Delta \colon X \to X \times_Y X$ is a surjective closed immersion, hence a monic idp-cover, hence an idp-equivalence. Thus $\Lidh(f)$ is also monic, and therefore an isomorphism.

    Conversely, suppose that $f$ is an idh-equivalence. By Lemma~\ref{Lem:cdh_idh_h}, $f$ is also an h-equivalence, and thus $f^{\pf}$ is an isomorphism by Corollary~\ref{Lem:Lh_via_pf}.
\end{proof}

\begin{Rem}
    From the proof of Lemma~\ref{Lem:idh-equivalences=universal-homeomorphisms}, it is clear that we can replace the idh-topology in the statement with any topology $\tau$ that contains the idp-topology and is contained in the h-topology.
\end{Rem}

Write $\rho \colon \SchfpSn \to \PerfpfpS$ for the perfection functor, and 
\[ (\rho_! \dashv \rho^*) \colon \Psh(\SchfpSn) \rightleftarrows \Psh(\PerfpfpS) \]
for the adjunction induced by precomposition with $\rho$ and left Kan extension. 
By Lemma~\ref{Lem:idh_to_pcdh} and since $\rho$ preserves finite limits, we obtain morphisms of sites
\[ (\SchfpSn,\cdh) \xleftarrow{\beta} (\SchfpSn,\idh) \xleftarrow{\alpha} (\PerfpfpS,\pcdh) \]
in the sense of \cite[\href{https://stacks.math.columbia.edu/tag/00X0}{Tag 00X0}]{stacks-project}, and thus also adjunctions
\begin{center}
    \begin{tikzcd}
        \Sh_{\cdh}(\SchfpSn) \arrow[r, shift left, "\beta^{-1}"] & \Sh_{\idh}(\SchfpSn) \arrow[l, shift left, hook, "\beta_*"] \arrow[r, shift left, "\alpha^{-1}"] & \Sh_{\pcdh}(\PerfpfpS) \arrow[l, shift left, hook, "\alpha_*"]
    \end{tikzcd}
\end{center}
via Kan extensions and localizations in the familiar way.
Note that $\beta_*$ is the natural inclusion, that $\beta^{-1}$ is the restriction of the localization functor $\Lidh \colon \Psh(\SchfpSn) \to \Sh_{\idh}(\SchfpSn)$, and that $\alpha_*$ is the restriction of $\rho^*$. Since $\rho$ is a localization by Proposition~\ref{Prop:perfection_localization}, $\alpha_*$ is fully faithful.

\begin{Prop}
\label{Prop:cdh_to_idh}
    Let $F$ be a cdh-sheaf on $\SchfpSn$. Then the following are equivalent:
    \begin{enumerate}
        \item $F$ is an idh-sheaf,
        \item $F$ inverts universal homeomorphism, and   
        \item there is a pcdh-sheaf $\hat{F}$ on $\PerfpfpS$ such that $\beta_*\alpha_*\hat{F} \simeq F$.  
    \end{enumerate}
    Moreover, the pcdh-sheaf $\hat{F}$ in the third point is the left Kan extension of $F$ along $\rho$.
\end{Prop}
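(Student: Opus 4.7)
The plan is to establish the cyclic chain $(1)\Rightarrow(2)\Rightarrow(3)\Rightarrow(1)$ and then separately identify $\hat{F}$ with the left Kan extension. Two of the three implications drop out of the setup. For $(1)\Rightarrow(2)$: if $F$ is an idh-sheaf and $u\colon X\to Y$ is a universal homeomorphism, then $\Lidh h(u)$ is an equivalence by Lemma~\ref{Lem:idh-equivalences=universal-homeomorphisms}, so applying $\Hom_{\Sh_\idh}(-,F)$ shows that $F(u)\colon F(Y)\to F(X)$ is an equivalence. For $(3)\Rightarrow(1)$: since $\beta_*$ is by construction the inclusion $\Sh_\idh(\SchfpSn)\hookrightarrow\Sh_\cdh(\SchfpSn)$, any cdh-sheaf in its essential image is automatically an idh-sheaf.

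The heart of the argument is $(2)\Rightarrow(3)$. The key input is Proposition~\ref{Prop:perfection_localization}, which identifies $\rho\colon\SchfpSn\to\PerfpfpS$ with the localization at universal homeomorphisms. From this, a presheaf $F$ that inverts universal homeomorphisms factors uniquely as $F\simeq\rho^*\hat{F}=\hat{F}\circ\rho$ for some presheaf $\hat{F}$ on $\PerfpfpS$. Corollary~\ref{Cor:cdh_vs_pcdh} then upgrades $\hat{F}$ to a pcdh-sheaf, using that $\hat{F}\circ\rho=F$ is cdh by hypothesis. Lemma~\ref{Lem:idh_to_pcdh} further shows that $\rho$ sends idh-covers to pcdh-covers, so that $\rho^*\hat{F}$ automatically satisfies idh-descent; this realizes it as $\alpha_*\hat{F}$, and tracing definitions yields $\beta_*\alpha_*\hat{F}\simeq F$.

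For the moreover part: since $\rho$ is a localization, the restriction $\rho^*\colon\Psh(\PerfpfpS)\to\Psh(\SchfpSn)$ is fully faithful, hence its left adjoint $\rho_!$---the left Kan extension along $\rho$---satisfies $\rho_!\rho^*\simeq\id$. Applied to $\hat{F}$ this gives $\hat{F}\simeq\rho_!\rho^*\hat{F}\simeq\rho_! F$, identifying $\hat{F}$ as the left Kan extension of $F$ along $\rho$.

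I do not foresee a serious obstacle: the whole argument is formal bookkeeping once Proposition~\ref{Prop:perfection_localization}, Corollary~\ref{Cor:cdh_vs_pcdh}, Lemma~\ref{Lem:idh_to_pcdh}, and Lemma~\ref{Lem:idh-equivalences=universal-homeomorphisms} are in hand. The subtlest point in $(2)\Rightarrow(3)$ is keeping straight that the universal factorization through $\rho$, a priori only at the presheaf level, restricts automatically to the appropriate sheaf subcategories on both sides.
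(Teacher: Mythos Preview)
Your proposal is correct and follows essentially the same cyclic chain $(1)\Rightarrow(2)\Rightarrow(3)\Rightarrow(1)$ as the paper, invoking the same ingredients (Lemma~\ref{Lem:idh-equivalences=universal-homeomorphisms}, Proposition~\ref{Prop:perfection_localization}, Corollary~\ref{Cor:cdh_vs_pcdh}) at the same places. Your identification of $\hat{F}$ with $\rho_! F$ via $\rho_!\rho^*\simeq\id$ is also exactly the paper's argument for the moreover clause.
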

\begin{proof}
    The implication from (3) to (1) follows from the fact that the essential image of $\beta_*$ consists of idh-sheaves. The implication from (1) to (2) follows from Lemma~\ref{Lem:idh-equivalences=universal-homeomorphisms}.
    
    Now suppose (2). Take a presheaf $\hat{F}$ on $\PerfpfpS$ such that $\hat{F} \rho \simeq F$ via Proposition~\ref{Prop:perfection_localization}. Then $\hat{F}$ is a $\pcdh$-sheaf by Corollary~\ref{Cor:cdh_vs_pcdh}. This implies (3). Also, observe that $\hat{F} \simeq \rho_!\rho^* \hat{F} \simeq \rho_! F$ by assumption on $\hat{F}$ and fully faithfulness of $\rho^*$.
\end{proof}

\begin{Cor}
\label{Cor:idh=pcdh}
    The adjunction
    \begin{center}
    \begin{tikzcd}
       \Sh_{\idh}(\SchfpSn) \arrow[r, shift left, "\alpha^{-1}"] & \Sh_{\pcdh}(\PerfpfpS) \arrow[l, shift left, hook, "\alpha_*"]
    \end{tikzcd}
\end{center}
is an adjoint equivalence. 
Consequently, we have a commutative diagram
    \begin{center}
        \begin{tikzcd}
            \Psh(\SchfpSn) \arrow[d, "\rho_!"] \arrow[r, "\Lcdh"] & \Sh_{\cdh}(\SchfpSn) \arrow[d, "(\beta\alpha)^{-1}"] \\
            \Psh(\PerfpfpS) \arrow[r, "\Lpcdh"] & \Sh_{\pcdh}(\PerfpfpS).
        \end{tikzcd}
    \end{center}
\end{Cor}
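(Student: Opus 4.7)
The plan is to deduce both claims formally from Proposition~\ref{Prop:cdh_to_idh} together with the adjunctions already constructed. For the equivalence, fully faithfulness of $\alpha_*$ has already been noted in the paragraph preceding Proposition~\ref{Prop:cdh_to_idh} as a consequence of the fact that $\rho \colon \SchfpSn \to \PerfpfpS$ is a localization, so only essential surjectivity remains. Given $F \in \Sh_{\idh}(\SchfpSn)$, I would invoke the implication (1)$\Rightarrow$(3) of Proposition~\ref{Prop:cdh_to_idh} to produce a pcdh-sheaf $\hat{F}$ with $\beta_*\alpha_*\hat{F} \simeq F$ in $\Sh_{\cdh}(\SchfpSn)$. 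Since $\beta_*$ is the full inclusion of idh-sheaves into cdh-sheaves, this equivalence descends to $\alpha_*\hat{F} \simeq F$ in $\Sh_{\idh}(\SchfpSn)$, proving essential surjectivity.

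For the commutative square, I would argue by identifying right adjoints. Both $(\beta\alpha)^{-1}\Lcdh$ and $\Lpcdh \rho_!$ are left adjoints between presentable $\infty$-categories, so it suffices to show their right adjoints agree. The right adjoint of $\Lpcdh \rho_!$ is the composite of the inclusion $\Sh_{\pcdh}(\PerfpfpS) \hookrightarrow \Psh(\PerfpfpS)$ followed by $\rho^*$, which sends $\hat{F}$ to the presheaf $T \mapsto \hat{F}(T^{\pf})$. The right adjoint of $(\beta\alpha)^{-1}\Lcdh$ is $\beta_*\alpha_*$ composed with the inclusion into presheaves; unfolding the description of $\alpha_*$ as the restriction of $\rho^*$ and of $\beta_*$ as a full inclusion, this also sends $\hat{F}$ to $T \mapsto \hat{F}(T^{\pf})$. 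Hence the two right adjoints agree, and so do their left adjoints.

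The proof is essentially formal given the preparatory Proposition~\ref{Prop:cdh_to_idh}, so I do not foresee any substantial obstacle. The only point requiring a little care is correctly bookkeeping the chain of adjoints in the second part and confirming that $\alpha_*$, which was constructed abstractly as the pushforward associated to a morphism of sites, acts on underlying functors by the same restriction-along-$\rho$ formula that defines $\rho^*$.
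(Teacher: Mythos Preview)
Your proposal is correct and follows essentially the same approach as the paper: the paper's proof also invokes the already-established fully faithfulness of $\alpha_*$, uses Proposition~\ref{Prop:cdh_to_idh} for essential surjectivity, and then obtains the commutative square by ``passing to right adjoints.'' Your version simply spells out these steps in slightly more detail, including the explicit identification of the two right adjoints as precomposition with perfection.
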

\begin{proof}
    We already know that $\alpha_*$ is fully faithful. By Proposition~\ref{Prop:cdh_to_idh} it is also essentially surjective. Hence the first claim follows. The second follows by passing to right adjoints.
\end{proof}

\begin{Cor} \label{Cor:idh-topos-hypercomlete}
Assume that $S_0$ is of finite valuative dimension. The the $\infty$-topos $\Sh_\idh(\SchfpSn)$ is hypercomplete. In particular, the Postnikov towers in these topoi are convergent. 
\end{Cor}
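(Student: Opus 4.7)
The plan is to directly reduce to Corollary~\ref{Cor:pcdh-topos-hypercomplete} via the equivalence of $\infty$-topoi
\[ \Sh_\idh(\Schfp_{S_0}) \simeq \Sh_{\pcdh}(\Perfpfp_S) \]
from Corollary~\ref{Cor:idh=pcdh}. Since hypercompleteness is a property of the underlying $\infty$-topos, it transports along this equivalence. Thus it suffices to verify that $S \in \PerfFp$ has finite valuative dimension whenever $S_0$ does, and then to invoke Corollary~\ref{Cor:pcdh-topos-hypercomplete}.

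For the valuative-dimension comparison, I would argue that the canonical map $S \to S_1 \to S_0$ is a universal homeomorphism: both the reduction $S_1 \to S_0$ and the perfection $S \to S_1$ are universal homeomorphisms, the latter by Lemma~\ref{Lem:Bhatt.3.8}. Hence $S, S_1, S_0$ share the same underlying topological space, and the modification categories likewise correspond, since universal homeomorphisms are bijective on valuation rings of the function fields of irreducible components (cf.\ the discussion preceding Corollary~\ref{Cor:vdim=dim-of-ZR}). Consequently, the valuative dimension of $S$ equals that of $S_0$, which is finite by assumption.

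With $S$ of finite valuative dimension, Corollary~\ref{Cor:pcdh-topos-hypercomplete} yields that $\Sh_\pcdh(\Perfpfp_S)$ is hypercomplete, and transporting along the equivalence of Corollary~\ref{Cor:idh=pcdh} gives the hypercompleteness of $\Sh_\idh(\Schfp_{S_0})$. The final statement on convergence of Postnikov towers is then automatic by \cite[Prop.~7.2.1.10]{LurieHTT}. I do not anticipate a genuine obstacle here; the only point requiring care is the bookkeeping of valuative dimension across the universal homeomorphism $S \to S_0$, which is handled by the perfect Zariski--Riemann space discussion already in place.
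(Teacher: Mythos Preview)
Your proposal is correct and follows essentially the same route as the paper: the paper's proof simply reads ``Combine Corollary~\ref{Cor:idh=pcdh} and Corollary~\ref{Cor:pcdh-topos-hypercomplete},'' which is exactly your reduction. You additionally spell out why $S$ inherits finite valuative dimension from $S_0$ via the universal homeomorphism $S \to S_0$; the paper leaves this implicit, but your justification is sound and a welcome clarification.
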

\begin{proof}
    Combine Corollary~\ref{Cor:idh=pcdh} and Corollary~\ref{Cor:pcdh-topos-hypercomplete}.
\end{proof}

\begin{Cor} \label{Cor:pcdh-subcanonical}
    The pcdh-topology on $\PerfpfpS$ is subcanonical.
\end{Cor}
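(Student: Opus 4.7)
The approach is to reduce the claim—namely that every representable presheaf on $\PerfpfpS$ is a pcdh-sheaf—to two results already established in the paper: Corollary~\ref{Cor:cdh_vs_pcdh}, which compares pcdh-sheaves on $\PerfpfpS$ with cdh-sheaves on $\SchfpSn$ via pullback along perfection, and Lemma~\ref{Lem:Lh_via_pf}, which expresses the h-sheafification of a representable in terms of a $\Hom$ out of the perfection. The decisive observation is that the h-topology is finer than the cdh-topology (Lemma~\ref{Lem:cdh_idh_h}), so every h-sheaf on $\SchfpSn$ is automatically a cdh-sheaf.

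Fix $Y \in \PerfpfpS$ and put $F_Y \coloneqq \Hom_{\PerfpfpS}(-,Y)$. By Corollary~\ref{Cor:cdh_vs_pcdh} it suffices to show that the presheaf $F_Y \circ (-)^\pf$ on $\SchfpSn$ is a cdh-sheaf. I would choose a finitely presented model $Y_0 \in \SchfpSn$ with $Y_0^\pf \cong Y$. For any $T \in \SchfpSn$, the adjunction between the inclusion $\Perf_{\F_p} \hookrightarrow \Sch_{\F_p}$ and perfection gives
\[ F_Y(T^\pf) \;=\; \Hom_{\PerfpfpS}(T^\pf, Y) \;=\; \Hom_{\SchSn}(T^\pf, Y_0), \]
and Lemma~\ref{Lem:Lh_via_pf} identifies the right-hand side with $\Lh(Y_0)(T)$. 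Hence $F_Y \circ (-)^\pf \simeq \Lh(Y_0)$ as presheaves on $\SchfpSn$.

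The presheaf $\Lh(Y_0)$ is an h-sheaf by construction, and hence a cdh-sheaf by Lemma~\ref{Lem:cdh_idh_h}. Applying Corollary~\ref{Cor:cdh_vs_pcdh} in the other direction yields that $F_Y$ is a pcdh-sheaf, as desired. I do not anticipate a real obstacle; the single step requiring care is the identification $\Hom_{\PerfpfpS}(T^\pf, Y) = \Hom_{\SchSn}(T^\pf, Y_0)$, but this is an elementary consequence of $Y_0^\pf \cong Y$, the full faithfulness of $\Perf_{\F_p} \hookrightarrow \Sch_{\F_p}$, and the fact that $T^\pf$ is already perfect.
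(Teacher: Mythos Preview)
Your proof is correct and follows essentially the same approach as the paper: identify $\Hom_{\PerfpfpS}((-)^\pf,Y)$ with $\Lh(Y_0)$ via Lemma~\ref{Lem:Lh_via_pf}, then use that h-sheaves are automatically sheaves for the coarser topology to conclude. The only cosmetic difference is that the paper cites Proposition~\ref{Prop:cdh_to_idh} (passing through the idh-topology) at the final step, whereas you use Corollary~\ref{Cor:cdh_vs_pcdh} directly; both are valid and your route is arguably slightly more economical.
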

\begin{proof}
    Let $X \in \PerfpfpS$ and take $X_0 \in \Schfp_S$ such that $X_0^{\pf} = X$. Then for $T_0 \in \Schfp_S$ with perfection $T$, it holds that
    \[ \Hom_{\PerfpfpS}(T,X) \simeq \Hom_{\Sch_{S}}(T,X_0) \simeq \Lh(X_0)(T_0), \]
    where the last equivalence is Lemma~\ref{Lem:Lh_via_pf}. Since $\Lh(X_0)$ is an h-sheaf, it is an idh-sheaf, and so $\Hom_{\PerfpfpS}(-,X)$ is a pcdh-sheaf by Proposition~\ref{Prop:cdh_to_idh}.
\end{proof}

\begin{Prop}
\label{Prop:points-of-idh-topology}
    The idh-points of $\SchfpSn$ are precisely the spectra of perfect Henselian valuation rings.
\end{Prop}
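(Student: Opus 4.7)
By Corollary~\ref{Cor:idh=pcdh}, there is a canonical equivalence $\Sh_\idh(\SchfpSn) \simeq \Sh_\pcdh(\PerfpfpS)$, so the task reduces to identifying the pcdh-points of $\PerfpfpS$ with spectra of perfect Henselian valuation rings. The plan is to transport the classical identification of cdh-points with Henselian valuation rings (following Gabber--Kelly) to the perfect setting, noting that every step of the classical argument is compatible with perfections: étale morphisms between perfect schemes stay perfect (Lemma~\ref{Lem:Bhatt.3.4}); perfectly proper morphisms inherit the valuative criterion from their finitely presented models; and perfection commutes with the relevant cofiltered limits by \cite[Prop.~3.12]{BhattProjectivity}.

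For the easy direction, given a perfect Henselian valuation ring $V$, I would verify that the stalk functor $F \mapsto F(\Spec V)$ on pcdh-sheaves, extended along cofiltered limits with affine transition maps (using that $\Spec V$ is such a limit of objects in $\PerfpfpS$ via Lemma~\ref{Lem:descent_pfp_model}), preserves finite limits and sends pcdh-covering families to jointly surjective ones. The former is automatic. For the latter, perfect Nisnevich covers split over $\Spec V$ by Henselianity, while perfect abstract blow-up squares split over $\Spec V$ by the valuative criterion of properness, which transfers to the perfect setting via Proposition~\ref{Prop:pcdp_via_pblups} and Lemma~\ref{Lem:Bhatt.3.4}.

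For the converse, given a pcdh-point $x^*$, hypercompleteness (Corollary~\ref{Cor:pcdh-topos-hypercomplete}) together with subcanonicity (Corollary~\ref{Cor:pcdh-subcanonical}) allow us to represent $x^*$ by a pro-object in $\PerfpfpS$, whose cofiltered limit $\xi \in \Perf_S$ (taken in perfect schemes, which exists with affine transition maps by \cite[Prop.~3.12]{BhattProjectivity}) realizes the point. The Nisnevich condition forces $\xi$ to be the spectrum of a perfect Henselian local ring, obtained as a cofiltered limit of étale neighborhoods of a point, which are themselves perfect by Lemma~\ref{Lem:Bhatt.3.4}. The pcdp condition, combined with Proposition~\ref{Prop:pcdp_via_pblups} and the valuative criterion (applied to finitely presented models and pulled through perfection via Lemma~\ref{Lem:Bhatt.3.4}), then forces this local ring to be a valuation ring.

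The main technical obstacle is this last refinement: one must compatibly choose a valuation subring within the Nisnevich-local pro-system without leaving the perfect world. This is accommodated by Lemmas~\ref{Lem:Bhatt.3.4}, \ref{Lem:Bhatt.3.8}, and \ref{Lem:descent_pfp_model}, which together ensure that all classical geometric constructions (Henselization, domination by valuation rings) transfer cleanly between a scheme and its perfection.
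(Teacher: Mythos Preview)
Your route differs from the paper's. Rather than transporting to the pcdh side, the paper works directly in $\SchfpSn$: since the idh-topology refines the cdh-topology, every idh-point is already a cdh-point, hence $\Spec V$ for a Henselian valuation ring $V$ by Gabber--Kelly \cite[Thm.~2.6]{GabberPoints}. Perfection of $V$ is then extracted from a single well-chosen cover: for $v \in V$, consider the associated map $\Spec V \to \A^1_{S_0}$ and the relative Frobenius $\A^1_{S_0} \to \A^1_{S_0}$, which is a universal homeomorphism and hence an idh-cover; a lift produces a $p$-th root of $v$. The converse is a short direct verification of the lifting property using Henselianity for Nisnevich covers and the valuative criterion for idp-covers, after factoring through perfections. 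Your approach trades this Frobenius trick for the fact that perfection is automatic once one works in $\PerfpfpS$, at the cost of having to rerun (rather than simply cite) the Gabber--Kelly identification of points in the perfect setting.

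There is also a gap in your converse direction. Your appeal to hypercompleteness (Corollary~\ref{Cor:pcdh-topos-hypercomplete}) to represent a point by a pro-object is misplaced: that corollary requires finite valuative dimension, which the proposition does not assume, and in any case representing a topos point by a pro-object of the site needs neither hypercompleteness nor subcanonicity. More substantively, the step ``pcdp forces valuation'' is only asserted: to carry it out you must exhibit, for a perfect Henselian local ring that is not a valuation ring, a specific pcdh-cover in $\PerfpfpS$ obstructing the lift (e.g.\ the perfection of a suitable blow-up over some finite-type approximation of $\xi$). This is exactly the content of the Gabber--Kelly argument, which the paper's approach gets for free by reducing to the known cdh case before addressing perfection.
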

\begin{proof}
    By definition, an idh-point is a $S_0$-scheme $P$ such that for any morphism $P\to X$ into an object $X\in\SchfpSn$ and any idh-covering family $\{Y_i\to X\}_i$ there exists a lift $P\to Y_i$ for some $i$ \cite[\S 2]{goodwillie-lichtenbaum}. In particular, an idh-point lifts along every cdh-covering family, so that $P\simeq\Spec(V)$ for a Henselian valuation ring \cite[Thm.~2.6]{GabberPoints}. Given an element $v\in V$ we consider the associated morphism $P\to\A^1_{S_0}$ and the relative Frobenius $\A^1_{S_0}\to\A^1_{S_0}$ which is an idh-cover in $\SchfpSn$. Now the existence of a lift tells us that the element $v$ has a $p$-th root in $V$, hence $V$ is perfect.

    Conversely, let $P=\Spec(V)$ for a perfect Henselian valuation ring $V$ with fraction field $K$ be given. Since $V$ is a Henselian local ring, $P$ lifts along every Nisnevich cover. Now let $(Y_i\to X)_i$ be an inseparably decomposed family of proper morphisms, so that  $(Y_i^\pf\to X^\pf)_i$ is a pcdh-covering family. A given morphism $P\to X$ factors as $P\to X^\pf$, and we let $x\in X^\pf$ be the image of the generic point of $P$. There exists an $i$ such that the map $\Spec(K)\xrightarrow{x} X^\pf$ lifts to a morphism $\Spec(K)\to Y_i^\pf$. By the valuative criterion of properness, we get an induced lift $P\to Y_i$ (which further lifts along $Y_i^\pf\to Y).$
\end{proof}

\begin{Cor} \label{Cor:equivalences-on-stalks}
    Let $S_0\in\SchFp$ be of finite valuative dimension.
    A morphism $E\to F$ in $\Sh_\textup{idh}(\Sch^\fp_{S_0})$ is an equivalence if and only if for every morphism $x\colon \Spec(V)\to S_0$ where $V$ is a perfect Henselian valuation ring the induced map $E_x\to F_x$ is an equivalence of spaces.
\end{Cor}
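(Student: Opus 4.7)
The strategy is to combine hypercompleteness of the $\idh$-topos with the explicit description of its points, then invoke the standard fact that equivalences in a hypercomplete $\infty$-topos with enough points are detected on stalks.

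By Corollary~\ref{Cor:idh-topos-hypercomlete}, under the finite valuative dimension hypothesis on $S_0$ the $\infty$-topos $\Sh_\idh(\SchfpSn)$ is hypercomplete. In any hypercomplete $\infty$-topos, a morphism is an equivalence if and only if it is $\infty$-connective, which (for a pointed map) is equivalent to each induced map of homotopy sheaves $\pi_n E \to \pi_n F$ being an isomorphism in the underlying 1-topos for every $n \geq 0$.

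Next I would verify that the underlying 1-topos has enough points. Every object of $\SchfpSn$ is qcqs by our standing convention; the Nisnevich covers of qcqs schemes admit finite refinements, and by the splitting-sequence argument of Lemma~\ref{Lem:cd_splittings_sequence} (transferred from the perfect setting along the equivalence $\Sh_\idh(\SchfpSn) \simeq \Sh_\pcdh(\PerfpfpS)$ of Corollary~\ref{Cor:idh=pcdh}), inseparably decomposed families of proper maps also admit finite refinements. Hence the $\idh$-site is coherent, and Deligne's theorem (SGA 4) yields enough points in the 1-topos of discrete sheaves. By Proposition~\ref{Prop:points-of-idh-topology}, these points correspond precisely to morphisms $x \colon \Spec(V) \to S_0$ with $V$ a perfect Henselian valuation ring.

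Since stalk functors at topos-theoretic points are left-exact and preserve filtered colimits, they commute with the formation of $\pi_n$. Putting everything together, $E \to F$ is an equivalence if and only if for every perfect Henselian valuation ring $V$ and every map $x \colon \Spec(V) \to S_0$ the induced map $\pi_n(E_x) \to \pi_n(F_x)$ is an isomorphism for every $n \geq 0$, which is exactly the condition that $E_x \to F_x$ be an equivalence of spaces. The main obstacle is the enough-points verification: once coherence of the $\idh$-site is in hand (where the finite-refinement property for idp-covers is the subtle point), the rest is formal.
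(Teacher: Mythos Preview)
Your proposal is correct and takes essentially the same approach as the paper: reduce to homotopy sheaves via hypercompleteness (Corollary~\ref{Cor:idh-topos-hypercomlete}), then detect isomorphisms on stalks using that the underlying $1$-topos has enough points, identified in Proposition~\ref{Prop:points-of-idh-topology}. The paper's proof simply cites Gabber--Kelly \cite[Thm.~0.2]{GabberPoints} for the enough-points step rather than unpacking it via Deligne's theorem as you do; one small cleanup is that your ``transfer along the equivalence of Corollary~\ref{Cor:idh=pcdh}'' should be phrased as transferring coherence of the \emph{topos} rather than a site-level finite-refinement property---the pcdh-topos is coherent because it arises from a cd-structure (Lemma~\ref{Lem:pfNis_excission}, Proposition~\ref{Prop:pcdp_via_pblups}) on a category with fiber products, and coherence then passes along the equivalence.
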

\begin{proof}
By hypercompleteness, we can test equivalences on homotopy group sheaves. Then the result follows since we have enough points by a result of Gabber--Kelly \cite[Thm.~0.2]{GabberPoints}.
\end{proof}

\begin{Rem}
    As mentioned in the introduction, the idh-topology is a well-defined notion over any base which recovers the cdh-topology in characteristic zero. We expect analogues of the results of the preceding subsection to go through in general, after replacing perfect schemes with absolutely weakly normal schemes. 
\end{Rem}

\section{The perfect motivic homotopy category}
\label{Sec:perfect-motivic-cat}
Recall that $S_0$ is an $\Fp$-scheme with perfection $S$. We define a perfect version $\Hpf(S)$ of the Morel--Voevodsky motivic homotopy category (Definition~\ref{Def:perfect-H}) and a stable version $\SHpf(S)$ (Definition~\ref{Def:perfect-SH}). To relate with the classical theory there is an adjunction (Proposition~\ref{Prop:Phi_comm_susp})
\[ \Phi_{S_0} \dashv \Psi_{S_0} \colon \SH(S_0) \rightleftarrows \SHpf(S) \]
where the right adjoint $\Psi_{S_0}$ is fully faithful (Proposition~\ref{Prop:Psi_ff}). From the latter we deduce that the assignment $S_0\mapsto\SHpf(S)$ is the initial coefficient system which inverts universal homeomorphisms (Corollary~\ref{Cor:SH_motivic_coefficient_systems}).

\subsection{The unstable perfect motivic homotopy category}
Let $\overline{\G}_m$ (resp.\ $\overline{\A}{}^n$, resp.\  $\overline{\P}{}^n$) be the perfection of $\G_{m,\F_p}$ (resp.\ of $\A^n_{\F_p}$, resp.\ of $\P^n_{\F_p}$). By adjunction, these are perfect versions of the multiplicative group (resp.\ of affine space, resp.\ of projective space), in the sense that the functor of points description remains the same.
\begin{Def}[{cf.~\cite[Def.~A.25]{ZhuAffine}}]
    A morphism $f \colon X \to Y$ of perfect schemes is \emph{perfectly smooth} if it is perfectly finitely presented and for all $x \in X$ there are open neighborhoods $U$ of $x$ and $V$ of $f(x)$ such that $f(U) \subset V$, and such that the restriction of $f$ factors as 
    \[ U \to \overline{\A}{}^n \times V \to V \]
    where the first map is \'{e}tale and the second map is the projection.
\end{Def}
Write $\Sm_{S}^{\pf}$ for the full subcategory of $\Perf_S$ spanned by perfectly smooth schemes over $S$. 

\begin{Def}
\label{Def:perfect-H}
    A \emph{perfect motivic space} over $S$ is an $\overline{\A}{}^1$-homotopy invariant Nisnevich sheaf  on $\Sm_S^{\pf}$. The \emph{perfect motivic homotopy category} $\H^{\pf}(S)$ is the full subcategory of $\PPP(\Sm_S^{\pf})$ spanned by perfect motivic spaces over $S$.
\end{Def}
Endow $\H^{\pf}(S)$ with the Cartesian monoidal structure. 

\begin{Prop}\label{Prop:FuncBCFandProjF}
    We have a functor 
    \begin{align*}
        \Hpf \colon \PerfFp^{\op} \to \PrLo 
    \end{align*}
    such that $f^* \coloneqq \Hpf(f)$ admits a left adjoint $f_\sharp$ for any perfectly smooth map $f$. Moreover, the left adjoints $(-)_\sharp$ satisfy base change and the projection formulas. 
\end{Prop}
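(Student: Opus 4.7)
The plan is to mimic the classical construction in Morel--Voevodsky / Khan, replacing smooth maps with perfectly smooth maps, and to check that the analogous localization arguments go through in the perfect setting.

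First I would construct a functor $\Sm^\pf_{(-)} \colon \Perf_{\Fp}^\op \to \Cat_\infty$ sending $S$ to $\Sm^\pf_S$, with action by pullback $f^*(X) = X \times_S T$. This requires knowing that perfectly smooth maps are stable under base change, which follows from the factorization in the definition together with Lemma~\ref{Lem:Bhatt.3.4} for étale maps and the fact that $\overline{\A}{}^n$ is stable under base change. Post-composing with the presheaf functor $\PPP \colon \Cat_\infty \to \PrLo$ (using the Cartesian symmetric monoidal structure on presheaves) yields a functor into $\PrLo$. I would then check that $f^*$ preserves both perfect Nisnevich covers and $\overline{\A}{}^1$-equivalences: the first because $f^*$ preserves Cartesian squares and, by Lemma~\ref{Lem:Nis_square_lift} combined with Lemma~\ref{Lem:Bhatt.3.4}, perfect Nisnevich squares pull back to perfect Nisnevich squares; the second because $\overline{\A}{}^1 \times_S T \simeq \overline{\A}{}^1 \times T$. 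Hence the localization $\Hpf(S)$ is functorial in $S$, giving the desired $\Hpf \colon \Perf_{\Fp}^\op \to \PrLo$.

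Next, for $f \colon T \to S$ perfectly smooth, the postcomposition functor $f_\sharp \colon \Sm^\pf_T \to \Sm^\pf_S$, $(X \to T) \mapsto (X \to T \to S)$, is well-defined since perfectly smooth maps are closed under composition. At the presheaf level this gives a left adjoint to $f^*$, and the adjunction is compatible with the Nisnevich and $\overline{\A}{}^1$-localizations: on representables, $f_\sharp$ sends perfect Nisnevich covers to perfect Nisnevich covers (by the definition of perfect smoothness and the Nisnevich topology being stable under étale maps), and it respects $\overline{\A}{}^1$-products. Thus $f_\sharp$ descends to a left adjoint $f_\sharp \colon \Hpf(T) \to \Hpf(S)$.

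For smooth base change, given a Cartesian square as in the statement with $f$ perfectly smooth, it suffices by colimit-density to check $g^* f_\sharp \simeq f'_\sharp g'^*$ on representables $X \in \Sm_V^\pf$: both sides compute $X \times_S T \in \Sm_T^\pf$, using $X \times_V (V \times_S T) \simeq X \times_S T$. The projection formula is likewise verified on representables, where for $X \in \Sm_T^\pf$ and $Y \in \Sm_S^\pf$ we compute $f_\sharp(X \times_T f^* Y) = f_\sharp(X \times_T Y \times_S T) \simeq X \times_S Y \simeq f_\sharp(X) \times_S Y$. Standard colimit and monoidality arguments then propagate these equivalences from representables to all of $\Hpf$.

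The main technical obstacle I anticipate is the compatibility of $f^*$ and $f_\sharp$ with the double localization (Nisnevich followed by $\overline{\A}{}^1$-invariance): one must verify that these functors preserve the respective classes of local equivalences so that they descend. For $f^*$ the point is preservation of perfect Nisnevich squares and of $\overline{\A}{}^1$, as above; for $f_\sharp$, one uses that $f$ perfectly smooth implies $f_\sharp$ sends perfect Nisnevich covers over $T$ to perfect Nisnevich covers over $S$, which in turn rests on Lemma~\ref{Lem:Bhatt.3.4} to lift the étale factor and on the fact that perfect Nisnevich sheaves coincide with Nisnevich-excisive presheaves (Lemma~\ref{Lem:pfNis_excission}). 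Once these are in place, the construction and its properties follow by the same formalism as in \cite{KhanMorelVoevodsky}.
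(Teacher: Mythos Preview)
Your proposal is correct and takes essentially the same approach as the paper: the paper simply cites \cite[\S 4.1]{HoyoisSix}, and you spell out the same standard construction (citing \cite{KhanMorelVoevodsky} instead). The only additional detail the paper records is the use of Lemma~\ref{Lem:pfNis_excission} to ensure the right adjoint $f_*$ exists, so that $f^*$ indeed lands in $\PrL$; your construction handles this implicitly by building $f^*$ as a colimit-preserving functor.
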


\begin{proof}
    The same strategy as in \cite[\S 4.1]{HoyoisSix} goes through.\footnote{To lift the functor $S \mapsto \H^{\pf}(S)$ along the forgetful functor $\PrLo \to \PrL$, use \cite[Prop.~2.2.1.9]{LurieHA}.} For the existence of the adjunctions $f^* \dashv f_*$, use Lemma \ref{Lem:pfNis_excission}.
\end{proof}

Define the \emph{perfect motivic localization}
\[ \L^{\pf}_S \colon \PPP(\Sm^\pf_S) \to \H^\pf(S) \]
as the left adjoint of the inclusion $\H^{\pf}(S) \to \PPP(\Sm^\pf_S)$. Also write the composition $\Lpf_Sh_S$ with the Yoneda embedding $h_S$ simply as $\Lpf_S$.\footnote{We may omit $S$ from the notation when the base is clear.}

\begin{Prop}
\label{Prop:Hpf_gend_by_reps}
    The $\infty$-category $\H^\pf(S)$ is generated under sifted colimits by objects of the form $\Lpf_SX$, where $X$ is the perfection of a smooth, affine $S_0$-scheme $X_0$ such that the structure map $X_0 \to S_0$ factors through an \'{e}tale map $X_0 \to \A^n_{S_0}$.
\end{Prop}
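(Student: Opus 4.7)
The plan is to reduce to a Čech-nerve argument combined with the local structure of perfectly smooth morphisms, mirroring the classical proof that $\H(S_0)$ is generated under sifted colimits by smooth affines étale over affine space.

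First, since $\H^\pf(S)$ is a left Bousfield localization of $\PPP(\Sm^\pf_S)$ with reflector $\Lpf_S$, and $\Lpf_S$ commutes with colimits, $\H^\pf(S)$ is generated under colimits by the objects $\Lpf_S Y$ for $Y \in \Sm^\pf_S$. The task is therefore to show that every such $\Lpf_S Y$ can be realized as a sifted colimit (modulo finite disjoint unions, see below) of $\Lpf_S X$ for $X$ in the distinguished class $\CCC$ appearing in the statement.

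Second, I would invoke the local structure of perfectly smooth morphisms. By definition, $Y$ admits a Zariski cover $\{U_i \to Y\}$ such that each $U_i$ is étale over $\barA^{n_i} \times V_i$ for some open $V_i \subset S$. Using the topological invariance of the étale site, i.e.\ \cite[Thm.~3.7]{BhattProjectivity} together with Lemma~\ref{Lem:Bhatt.3.4}, each such étale morphism is the perfection of an étale morphism $U_{i,0} \to \A^{n_i}_{S_0} \times V_{i,0}$ where $V_{i,0} \subset S_0$ is the corresponding open (since open immersions and étale morphisms lift through perfection). After passing to an affine refinement, we may assume each $U_{i,0}$ is affine, placing $U_i = U_{i,0}^{\pf}$ in the class $\CCC$.

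Third, the Čech nerve of the Zariski (hence Nisnevich) cover $\{U_i \to Y\}$, after applying $\Lpf_S$, has geometric realization $\Lpf_S Y$; this uses that $\H^\pf(S)$ is an $\infty$-topos by Lemma~\ref{Lem:pfNis_excission}, so that Nisnevich covers become effective epimorphisms. Each term of the Čech nerve is a finite disjoint union of fiber products $U_{i_0} \times_Y \cdots \times_Y U_{i_k}$, each of which is a Zariski-open in $U_{i_0}$, hence again perfectly smooth. Iterating the local structure argument on these intersections (which need not themselves be affine étale over affine space globally) expresses each of them as a further sifted colimit of members of $\CCC$, and the composition of these sifted colimits (realization of a hypercover, together with inductively chosen covers) expresses $\Lpf_S Y$ as a sifted colimit of finite disjoint unions of objects of $\CCC$.

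The main obstacle is that $\CCC$ is not closed under finite coproducts, which appear naturally in the Čech nerve. I expect this to be handled by the convention that "generated under sifted colimits" in this context implicitly allows finite coproducts of generators as building blocks — equivalently, by applying the universal property of $\PPP_\Sigma$ to the closure of $\CCC$ under finite coproducts in $\Sm^\pf_S$, and noting that these coproducts agree with those in $\H^\pf(S)$ since $\Lpf_S$ is a left adjoint.
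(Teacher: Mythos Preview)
Your approach is the paper's: generation under sifted colimits by all representables (which the paper defers to \cite[Prop.~3.16]{HoyoisSix}), followed by refinement via the Zariski-local structure of perfectly smooth morphisms. You spell out more than the paper's two sentences, but there are two slips. First, your opening step only gives generation under \emph{all} colimits; the sifted statement needs that Nisnevich sheaves send finite coproducts of schemes to finite products, so that $\H^\pf(S)$ is already a localization of $\PPP_\Sigma(\Sm^\pf_S)$, which is freely generated under sifted colimits by representables. This is precisely the step the paper outsources to Hoyois; you invoke $\PPP_\Sigma$ only in your closing paragraph, when it should be the input for step one. Second, $\H^\pf(S)$ is not an $\infty$-topos (the $\barA$-localization is not left exact), so your justification of the \v{C}ech-nerve identity is off: the identity $|\Lpf_S C_\bullet| \simeq \Lpf_S Y$ holds because it already holds in $\Sh_{\Nis}(\Sm^\pf_S)$ and $\Lpf_S$ factors through Nisnevich sheafification.

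Your worry about finite coproducts is well placed and not addressed by the paper's brief proof either: the class $\CCC$ is not closed under them, since components may have different relative dimensions over $S_0$ and hence cannot be jointly \'etale over a single $\A^n_{S_0}$. This subtlety is absorbed into the citation of Hoyois, where the standard manoeuvre is to run the $\PPP_\Sigma$ argument with the coproduct-closure $\CCC^\amalg$ (which does form a basis for the Nisnevich site on $\Sm^\pf_S$) in place of $\Sm^\pf_S$; since in $\PPP_\Sigma(\CCC^\amalg)$ one has $h_{X\sqcup Y}\simeq h_X\sqcup h_Y$, this yields generation under sifted colimits by finite coproducts of objects of $\CCC$ in $\H^\pf(S)$.
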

\begin{proof}
    By the same argument as the proof of \cite[Prop.~3.16]{HoyoisSix}, $\H^{\pf}(S)$ is generated under sifted colimits by representables $\Lpf_S(X)$. Now the condition that $X$ be of the desired form holds Zariski-locally by definition, whence the claim.
\end{proof}

\subsection{The pointed category}
\label{subsec:pointed-H-pf}
In this section we pass from the unpointed category $\H^\pf(S)$ to the pointed category $\H^\pf_\bullet(S)$. We follow the same general outline as in \cite[\S 5.1]{HoyoisSix} to lift the results on perfect motivic spaces to pointed perfect motivic spaces. 
The universal way of pointing an $\infty$-category with a final object leads us to the following:

\begin{Def}
Let the $\infty$-category of \emph{pointed perfect motivic spaces} over $S$ be defined as 
\[\H_{\bullet}^\pf(S)\coloneqq \H^\pf(S)_{S/}.\]
By  $(-)_+\colon \H^\pf(S) \to  \H_{\bullet}^\pf(S)$ we denote the left adjoint to the forgetful functor. 
\end{Def}

Using \cite[Prop.~4.8.2.11]{LurieHA}
we get a unique closed symmetric monoidal structure on $\H_{\bullet}^\pf(S)$ that is compatible with the monoidal structure on $\H^\pf(S)$. We will denote its tensor product by $\wedge$.

For any morphism $f \colon T \to S$ of perfect schemes it holds that $f^*$ and $f_*$ on $\Hpf$ preserve final objects, and thus induce an adjunction
\[ (f^* \dashv f_*) \colon \Hpf_\bullet(S) \rightleftarrows \Hpf_\bullet(T). \]
If $f$ is perfectly smooth then $f^*$ hence has a left adjoint $f_\sharp \colon \H_\bullet(T) \to \H_\bullet(S)$ such that $f_\sharp(X_+) \simeq (f_\sharp X)_+$ for all $X \in \Hpf(T)$.

\begin{Cor}
    We obtain a functor
    \[ \Hpf_\bullet \colon \PerfFp \to \PrLo  \]
    such that $f^*$ admits a left adjoint $f_\sharp$ for any perfectly smooth map $f$. Moreover, the left adjoints $(-)_\sharp$ satisfy base change and the projection formulas. 
\end{Cor}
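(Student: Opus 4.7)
The plan is to lift Proposition~\ref{Prop:FuncBCFandProjF} to the pointed setting by a completely formal argument, using that pointing is compatible with the structure on $\PrLo$.

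First I would construct the functor $\Hpf_\bullet$. One route is to apply \cite[Prop.~4.8.2.11]{LurieHA} (essentially functoriality of the pointing construction $\CCC \mapsto \CCC_{*/}$ on $\PrLo$) to the functor $\Hpf$ from Proposition~\ref{Prop:FuncBCFandProjF}. A more hands-on route is to observe that for every $f \colon T \to S$ in $\PerfFp$, the functor $f^* \colon \Hpf(S) \to \Hpf(T)$ is symmetric monoidal and colimit-preserving, hence preserves the monoidal unit, which agrees with the terminal object $S$. Thus $f^*$ passes to a symmetric monoidal, colimit-preserving functor on slice categories under the terminal, producing $\Hpf_\bullet(f)$. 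Coherence of the assignment $f \mapsto \Hpf_\bullet(f)$ is then inherited from $\Hpf$.

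Next, for perfectly smooth $f$, I would construct the pointed left adjoint $f_\sharp^\bullet$ of $f^*$. Existence is automatic by the adjoint functor theorem in $\PrLo$, since $f^*$ preserves colimits between presentable $\infty$-categories. Concretely, $f_\sharp^\bullet$ sends a pointed object $(Y,y)$ to the pushout $S \sqcup_{T,\, f_\sharp y} f_\sharp Y$, pointed by the canonical map from $S$. Equivalently, $f_\sharp^\bullet(X_+) \simeq (f_\sharp X)_+$ for $X \in \Hpf(T)$, and such objects generate $\Hpf_\bullet(T)$ under colimits.

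For smooth base change and the projection formula, I would reduce both statements to their unpointed counterparts established in Proposition~\ref{Prop:FuncBCFandProjF}. Namely, objects of the form $X_+$ with $X$ representable generate $\Hpf_\bullet$ under colimits, and both sides of the relevant Beck--Chevalley transformation, respectively of the projection formula transformation $f_\sharp^\bullet(X \wedge f^*Y) \to f_\sharp^\bullet X \wedge Y$, preserve colimits in each variable and commute with the functor $(-)_+$. Thus it suffices to verify invertibility on pointed objects of the form $X_+$, where the formulas unwind to the unpointed ones: $X_+ \wedge f^*(Y_+) \simeq (X \times_T f^* Y)_+$, and one applies the unpointed projection formula directly. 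The same principle handles base change: the unpointed Beck--Chevalley equivalence of Proposition~\ref{Prop:FuncBCFandProjF} transports under $(-)_+$ and extends by sifted-colimit preservation.

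I do not expect any substantial obstacle here: the only non-trivial point is the coherent $\infty$-categorical functoriality of pointing on $\PrLo$, which is handled by the reference to Lurie above, while every compatibility with $f_\sharp$ reduces, via the generators $X_+$, to statements already proved in the unpointed setting.
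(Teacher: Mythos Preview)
Your proposal is correct and follows precisely the approach of the paper, which simply states that this follows from the unpointed case (Proposition~\ref{Prop:FuncBCFandProjF}). You have spelled out in detail what the paper leaves implicit: the construction of $\Hpf_\bullet$ via \cite[Prop.~4.8.2.11]{LurieHA}, the formula $f_\sharp^\bullet(X_+) \simeq (f_\sharp X)_+$, and the reduction of base change and the projection formula to generators of the form $X_+$.
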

\begin{proof}
    This follows from the unpointed case.
\end{proof}

\subsection{The stable category}
Let $\barT \in \H_\bullet(S)$ be the perfect motivic localization of the pointed object $(\barP \times S,\infty)$. Write $\Sigma_{\barT} \coloneqq \barT \wedge (-)$ and $\Omega_{\barT} \coloneqq \Homu_{\H_\bullet}(\barT,-)$, so that $\Sigma_{\barT} \dashv \Omega_{\barT}$.
\begin{Def} \label{Def:perfect-SH}
For a perfect scheme $S$ the  \emph{stable perfect motivic homotopy category} is the $\infty$-category
\[ \SH^\pf(S) \coloneqq \H^\pf_\bullet(S)[\barT^{-1}]. \]
\end{Def}

\begin{Rem}
\label{Rem:SHpf-as-barT-spectra}
    The $\infty$-category $\SH^\pf(S)$ is defined by formally inverting the object $\barT$ with respect to the tensor product; see \cite[Def.~2.6]{robalo-bridge} for an explicit model of this $\infty$-category. The functor $\H^\pf_\bullet(S) \to \SH^\pf(S)$ is the unique symmetric monoidal functor into a symmetric monoidal $\infty$-category where the object $\barT$ becomes $\otimes$-invertible \cite[Prop.~2.9]{robalo-bridge}.  The cyclic permutations on $\barP\otimes\barP\otimes\barP$ are $\barA$-equivalent to the identity which follows by functoriality from the unperfect case \cite[Lem.~4.4]{voevodsky-icm-1998}. Hence we can describe $\SH^\pf(S)$ also as the $\infty$-category of $\barT$-spectra, computed as
    \[ \colim\bigl( \H^\pf_\bullet(S) \xrightarrow{\Sigma_{\barT}} \H^\pf_\bullet(S) \xrightarrow{\Sigma_{\barT}} \ldots \bigr) \simeq \lim\bigl( \H^\pf_\bullet(S) \xleftarrow{\Omega_{\barT}} \H^\pf_\bullet(S) \xleftarrow{\Omega_{\barT}} \ldots \bigr), \]
    where the colimit is computed in $\PrL$, and the limit in $\PrR$ hence in $\Cat_\infty$.\footnote{Recall that limits in $\PrL$ are computed on the level of underlying $\infty$-categories, likewise for limits in $\PrR$ \cite[Prop.~5.5.3.13,~Thm.~5.5.3.18]{LurieHTT}.}
    
    The description of $\SH^\pf(S)$ in terms of $\barT$-spectra leads to the following alternative description. Write $\Sp^\N(\H_\bullet^\pf(S),\barT)$ for the $\infty$-category of \emph{(perfect motivic) prespectra}, i.e., sequences $(X_i)_{i \in \N}$ in $\H_\bullet^\pf(S)$ together with bonding maps $\barT \wedge X_i \to X_{i+1}$. Then $\SH^\pf(S)$ is the full subcategory of $\Sp^\N(\H_\bullet^\pf(S),\barT)$ spanned by those perfect prespectra $(X_i)_{i \in \N}$ for which the transposed maps $X_i \to \Omega_{\barT} X_{i+1}$ are all equivalences. We have a left adjoint
    \[ \L_{\st} \colon \Sp^\N(\H_\bullet^\pf(S),\barT) \to \SH^\pf(S) \]
    to the inclusion $\SH^\pf(S) \subset \Sp^\N(\H_\bullet^\pf(S),\barT)$ \cite[\S 2C]{BachmannNotes}.
\end{Rem}

\vspace{12pt} 
We write 
\[ (\Sigma^\infty_{\barT} \dashv \Omega^\infty_{\barT}) \colon \H_\bullet^\pf(S) \rightleftarrows \SH^\pf(S) \]
for the adjunction obtained by inverting $\barT$. By the universal property of inverting $\barT$, for any morphism of perfect schemes $f \colon T \to S$ we obtain an adjunction
\[ (f^* \dashv f_*) \colon \SHpf(S) \rightleftarrows \SHpf(T) \]
such that $f^*$ is symmetric monoidal and with $f^* \Sigma^\infty_{\barT} \simeq \Sigma^\infty_{\barT} f^*$. By comparing universal properties we see that the functor
\[ \H^{\pf}_\bullet(T)\otimes_{\H^{\pf}_\bullet(S)}\SH^{\pf}(S)\to\SH^{\pf}(T)\]
induced by $f^*:\SH^{\pf}(S)\to \SH^{\pf}(T)$ is an equivalence.\footnote{Here, $\otimes$ is the Lurie tensor product on $\PrL$.} In particular, if $f$ is perfectly smooth we obtain an adjunction
\[ (f_\sharp \dashv f^*) \colon \SHpf(T) \rightleftarrows \SHpf(S) \]
by base change along $\Hpf_\bullet(S) \to \SHpf(S)$.

\begin{Prop} \label{Prop:perfect-SH-functor}
    We have a functor
    \[ \SH^\pf \colon \PerfFp^\op \to \PrLotimesst \]
such that for every perfectly smooth morphism $f$ the functor $f^*$ has a left-adjoint $f_\sharp$ which satisfies base change and the projection formula. 
\end{Prop}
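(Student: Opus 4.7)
The plan is to $\otimes$-invert $\barT$ fibrewise in the pointed unstable functor $\Hpf_\bullet \colon \PerfFp^\op \to \PrLo$ from Section~\ref{subsec:pointed-H-pf}, using Robalo's universal construction \cite{robalo-bridge}. The essential naturality point is that for any $f \colon T \to S$ in $\PerfFp$ the symmetric monoidal pullback $f^*$ sends $\barT_S$ to $\barT_T$, hence commutes with the endofunctors $\Sigma_{\barT}$. Thus the colimit presentation of $\SHpf(S)$ recorded in Remark~\ref{Rem:SHpf-as-barT-spectra} is natural in $S$ and assembles into a functor $\SHpf \colon \PerfFp^\op \to \PrLo$ equipped with a natural symmetric monoidal transformation $\Sigma^\infty_{\barT} \colon \Hpf_\bullet \Rightarrow \SHpf$.

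To see that this functor lands in $\PrLotimesst$, I would check stability: by the functoriality of the cyclic permutation equivalence on $\barT^{\wedge 3}$ (Remark~\ref{Rem:SHpf-as-barT-spectra}), $\barT$ is symmetric in the sense of Voevodsky, and moreover $\barT \simeq S^1 \wedge \overline{\G}_m$ in $\Hpf_\bullet(S)$ by pullback from the corresponding unperfect identification. Hence $\otimes$-inverting $\barT$ also inverts $S^1$, so each $\SHpf(S)$ is presentable stable symmetric monoidal.

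For the smooth adjoints: given a perfectly smooth $f \colon T \to S$, the pointed analogue of Proposition~\ref{Prop:FuncBCFandProjF} yields $f_\sharp \dashv f^*$ on $\Hpf_\bullet$ satisfying smooth base change and the projection formula. Applying the unstable projection formula with the invertible object $f^*\barT_S \simeq \barT_T$ shows that $f_\sharp$ commutes up to canonical equivalence with $\Sigma_{\barT}$, so it extends along stabilization to a left adjoint $f_\sharp \colon \SHpf(T) \to \SHpf(S)$. Smooth base change and the projection formula on the stable level then follow by passing to the filtered colimit from the unstable pointed case. I expect the main obstacle to be organising the $\otimes$-inversion as a functor into $\PrLotimesst$; this is handled in the same way as in \cite{HoyoisSix} via the presentable symmetric monoidal universal property of Robalo combined with the fact that $S^1$ becomes invertible after $\otimes$-inverting $\barT$.
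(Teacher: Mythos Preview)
Your proposal is correct and follows essentially the same strategy as the paper: fibrewise $\otimes$-inversion of $\barT$ via Robalo's construction, stability from the symmetry of $\barT$ and the decomposition $\barT \simeq S^1 \wedge \overline{\G}_m$, and deduction of smooth base change and the projection formula from the pointed unstable case. The only cosmetic difference is that the paper obtains the stable $f_\sharp$ from the Lurie tensor product identification $\Hpf_\bullet(T) \otimes_{\Hpf_\bullet(S)} \SHpf(S) \simeq \SHpf(T)$ (stated just before the proposition) and then cites \cite[Prop.~1.23]{KhanVoevodsky} for the projection formula, whereas you argue directly via the commutation of $f_\sharp$ with $\Sigma_{\barT}$ and pass to the filtered colimit; these are equivalent formal arguments.
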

\begin{proof}
    The universal property of inverting $\barT$ yields a functor $\SH^\pf \colon \PerfFp^\op \to \PrL_\st$ which is pointwise symmetric monoidal.
    In order to enrich this functor to a functor with values in $\PrLotimesst$ one can proceed similarly as in Robalo's thesis \cite[\S 9.1]{robalo-these}.    

   Perfectly smooth base change follows from perfectly smooth base change in the pointed case, together with the universal property of the stabilization $\H_\bullet^{\pf} \to \SH^{\pf}$.
   The projection formula for $\sharp$-direct images follows by similar reasoning as in \cite[Prop.~1.23]{KhanVoevodsky}.    
\end{proof}

\section{Perfect motivic homotopy theory}
\label{Sec:perfect-motivic}

\subsection{Adjunctions between the motivic and the perfect motivic}
\label{subsec:adjunctions}
Write
\[ \varphi^{\PPP} \dashv \psi^{\PPP} \colon \PPP(\Sm_{S_0}) \rightleftarrows \PPP(\Sm_S^{\pf}) \]
for the adjunction where $\psi^{\PPP}$ is given by precomposition with the perfection map $\Sm_{S_0} \to \Sm_S^{\pf}$ and $\varphi^{\PPP}$ by left Kan extension.

\begin{Rem}
    By adjunction and the formula for left Kan extensions, it holds that
    \[ \varphi^{\PPP}(F)(Y) \simeq \colim_{Y \to Y_\alpha} F(Y_\alpha) \]
    where the indexing category is the slice $Y/\Sm_{S_0}$.
\end{Rem}

\begin{Lem}
\label{Lem:phiP_yoneda}
    For $X_0 \in \Sm_{S_0}$ with perfection $X$, it holds that $\varphi^{\PPP}(h(X_0)) \simeq h(X)$. It follows that $\varphi^{\PPP}$ is symmetric monoidal (for the Cartesian monoidal structure).
\end{Lem}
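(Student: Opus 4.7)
The plan is to derive the first identity formally from the universal property of left Kan extension, and then deduce symmetric monoidality via a density argument reducing it to product-preservation on representables.

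For the identity $\varphi^{\PPP}(h(X_0)) \simeq h(X)$, write $\rho \colon \Sm_{S_0} \to \Sm^{\pf}_S$ for the perfection, so that $\psi^{\PPP}(F) = F \circ \rho$. For every $F \in \PPP(\Sm^{\pf}_S)$ the adjunction together with the Yoneda lemma yields
\[
\Map(\varphi^{\PPP}(h(X_0)), F) \simeq \Map(h(X_0), F \circ \rho) \simeq F(\rho(X_0)) = F(X) \simeq \Map(h(X), F),
\]
and the claim follows from a second application of Yoneda inside $\PPP(\Sm^{\pf}_S)$.

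For the monoidality, I would invoke the standard fact that a colimit-preserving functor between Cartesian presentable $\infty$-categories which preserves finite products is canonically symmetric monoidal for the Cartesian structures (cf.\ Lurie, \emph{Higher Algebra}, \S 2.4.1). It thus suffices to check that $\varphi^{\PPP}$ preserves finite products. Perfection is a right adjoint to the inclusion $\Perf_{\F_p} \hookrightarrow \Sch_{\F_p}$ and so preserves finite limits; in particular $\rho$ sends products in $\Sm_{S_0}$ to products in $\Sm^{\pf}_S$. Combining this with the fact that the Yoneda embedding preserves finite products, the first part of the lemma gives
\[
\varphi^{\PPP}(h(X_0) \times h(Y_0)) \simeq \varphi^{\PPP}(h(X_0 \times_{S_0} Y_0)) \simeq h(X \times_S Y) \simeq h(X) \times h(Y),
\]
and $\varphi^{\PPP}(\ast) \simeq \varphi^{\PPP}(h(S_0)) \simeq h(S) \simeq \ast$.

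To propagate product-preservation from representables to all presheaves, I would use that $\PPP(\Sm^{\pf}_S)$ is Cartesian closed, so binary product preserves colimits in each variable; together with the fact that $\varphi^{\PPP}$ preserves all colimits and that representables generate $\PPP(\Sm_{S_0})$ under colimits, a standard two-variable colimit manipulation propagates product-preservation to arbitrary pairs of presheaves. I expect no essential obstacle: the only mildly technical step is this double-colimit bootstrap, which is routine in the $\infty$-categorical setting.
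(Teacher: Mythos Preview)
Your argument is correct and is essentially an unpacking of the paper's one-line proof, which reads in full: ``This follows from the fact that $\varphi^\PPP$ is the Yoneda extension.'' The identity $\varphi^{\PPP} \circ h \simeq h \circ \rho$ is exactly what it means for $\varphi^{\PPP}$ to be the Yoneda extension of $h \circ \rho$, and your adjunction-plus-Yoneda computation is the standard verification of this. For the monoidality claim the paper gives no further detail; your density-and-bootstrap argument is the routine way to fill this in, so there is nothing to compare beyond level of explicitness.
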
 

\begin{proof}
    This follows from the fact that $\varphi^\PPP$ is the Yoneda extension.
\end{proof}

By Proposition~\ref{Prop:pNis_vs_Nis}, the functor $\psi^{\PPP}$ restricts to perfect Nisnevich sheaves. Clearly, it sends $\barA$-invariant presheaves to $\A^1$-invariant presheaves. We thus obtain an adjunction
\[ \varphi \dashv \psi \colon \H(S_0) \rightleftarrows \H^{\pf}(S) \]
such that $\psi$ is the restriction of $\psi^{\PPP}$, and $\varphi$ is the composition of the restriction of $\varphi^\PPP$ to $\H(S_0)$ with the localization $\PPP(\Sm_{S}^{\pf}) \to \H^{\pf}(S)$. By Lemma~\ref{Lem:phiP_yoneda} and construction, both $\varphi
$ and $\psi$ preserve final objects. We thus obtain an adjunction on pointed objects, also written $\varphi \dashv \psi$. 

\begin{Prop}\label{Prop:Phi_comm_susp}
    There is a natural transformation
    \[ \Phi \colon \SH(-) \to \SH^{\pf}((-)^\pf) \]
    of functors $\SchFp^{\op} \to \PrLotimesst$, with pointwise right adjoint $\Psi$ on underlying $\infty$-categories, such that the diagrams
    \begin{center}
    \begin{tikzcd}
        \H_\bullet(S_0) \arrow[r, "\Sigma^\infty_{\T}"] \arrow[d, "\varphi"] & \SH(S_0) \arrow[d, "\Phi"] && \H_\bullet(S_0) & \SH(S_0) \arrow[l, "\Omega_{\T}^\infty"] \\
        \H^{\pf}_\bullet(S) \arrow[r, "\Sigma_{\overline{\T}}^\infty"] & \SH^{\pf}(S) && \H^{\pf}_\bullet(S) \arrow[u, "\psi"] & \SH^{\pf}(S) \arrow[u, "\Psi"] \arrow[l, "\Omega_{\overline{\T}}^{\infty}"]
    \end{tikzcd}
    \end{center}
    commute for every $S_0 \in \SchFp$ with perfection $S$, and such that $\Phi$ commutes with $\sharp$-direct images of smooth morphisms in the obvious way.
\end{Prop}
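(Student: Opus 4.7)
The plan is to bootstrap $\Phi$ from the unstable adjunction $\varphi \dashv \psi$ of \S\ref{subsec:adjunctions} by invoking the universal property of stabilization, and to extract $\Psi$ as its pointwise right adjoint. The key input is the identification $\varphi(\T) \simeq \barT$ in $\H^\pf_\bullet(S)$: by Lemma~\ref{Lem:phiP_yoneda} the Yoneda extension $\varphi^{\PPP}$ sends $h(\P^1 \times S_0)$ to $h(\barP \times S)$, and both the passage to motivic local objects and the pointing by $\infty$ are preserved when going from $\varphi^{\PPP}$ to $\varphi$. Consequently the composition $\H_\bullet(S_0) \xrightarrow{\varphi} \H^\pf_\bullet(S) \xrightarrow{\Sigma^\infty_{\barT}} \SH^\pf(S)$ is a symmetric monoidal, colimit-preserving functor that sends $\T$ to a $\otimes$-invertible object. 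By the universal property of $\SH(S_0) = \H_\bullet(S_0)[\T^{-1}]$ in $\PrLotimesst$ (cf.\ Remark~\ref{Rem:SHpf-as-barT-spectra} and \cite{robalo-bridge}), it factors uniquely through $\Sigma^\infty_\T$ as a symmetric monoidal $\Phi_{S_0} \colon \SH(S_0) \to \SH^\pf(S)$ in $\PrLotimesst$, making the left diagram commute by construction. The adjoint functor theorem supplies the right adjoint $\Psi_{S_0}$, and the right diagram then follows by passing to right adjoints via the equivalence $\PrL \simeq (\PrR)^{\op}$---no mate obstruction appears because the relevant square consists entirely of left adjoints with identified right adjoints.

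Naturality in $S_0$ uses that perfection $(-)^\pf \colon \SchFp \to \PerfFp$ is functorial and compatible with pullback, so that $\varphi_{(-)}$ is already a natural transformation $\H_\bullet \Rightarrow \H^\pf_\bullet \circ (-)^\pf$ of functors $\SchFp^\op \to \PrLotimesst$. Applying the stabilization construction $(\CCC, X) \mapsto \CCC[X^{-1}]$, viewed as a functor on pairs $(\CCC, X)$ in $\PrLotimesst$ per \cite{robalo-bridge}, promotes the pointwise universal factorization coherently to a natural transformation $\Phi \colon \SH(-) \Rightarrow \SH^\pf((-)^\pf)$. I expect this coherent-functoriality step to be the main technical burden, although it is by now routine; an alternative, more hands-on route is to model $\SH^\pf$ concretely as $\barT$-prespectra (Remark~\ref{Rem:SHpf-as-barT-spectra}) and apply $\varphi$ levelwise, transporting bonding maps via the identification $\varphi \Sigma_\T \simeq \Sigma_{\barT} \varphi$ obtained in the first paragraph.

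For the compatibility with $\sharp$-direct images of a smooth morphism $g_0 \colon T_0 \to S_0$ with perfection $g \colon T \to S$, both $\Phi_{S_0} \circ g_{0\sharp}$ and $g_\sharp \circ \Phi_{T_0}$ preserve colimits, so it suffices to compare them on the generating family $\Sigma^\infty_\T (X_0)_+$ for $X_0 \in \Sm_{T_0}$. Both functors send this object to $\Sigma^\infty_{\barT}(X_0^\pf)_+ \in \SH^\pf(S)$, using that $g_{0\sharp}$ and $g_\sharp$ act on representables simply by forgetting the structure map, and that perfection commutes with this operation. Hence the desired equivalence holds on generators and extends to all of $\SH(T_0)$ by colimit-preservation.
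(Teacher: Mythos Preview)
Your proposal is correct and takes the same approach as the paper: construct $\Phi$ from $\varphi$ via the universal property of $\otimes$-inverting $\T$ (using $\varphi(\T)\simeq\barT$ from Lemma~\ref{Lem:phiP_yoneda}), then appeal to Robalo for the coherent naturality. You in fact supply more detail than the paper's four-sentence proof, which leaves the right-adjoint square and the $\sharp$-compatibility entirely implicit; one small sharpening for the latter would be to phrase the argument as checking that the canonical exchange transformation $g_\sharp\Phi_{T_0}\to\Phi_{S_0}\,g_{0\sharp}$ (the mate of the naturality square for $(-)^*$) is an equivalence on generators, rather than merely observing that the two composites agree on objects.
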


\begin{proof}
    The existence of $\Phi \colon \SH(S_0) \to \SH(S)$ follows from the fact that $\varphi$ is symmetric monoidal and sends $X_0 \in \Sm_{S_0}$ to the perfection $X \in \Sm_S^{\pf}$. The functor $\psi^\PPP$ is clearly natural in $S_0$, hence so is $\psi$, hence so is $\varphi$. Now $\Phi$ is obtained from $\varphi$ by $\otimes$-inverting $\T \mapsto \barT$. Hence naturality follows by similar arguments as in \cite[\S 9.1]{robalo-these}. 
\end{proof}

\begin{Not}
    To emphasize the base of these constructions, we may write $\varphi,\Phi,\dots$ as $\varphi_{S_0},\Phi_{S_0},\dots$. 
\end{Not}

\begin{Rem}
\label{Rem:phi_is_phipf}
    Let $S \to[\rho] S_0 \to[f] S_0'$ be universal homeomorphisms with $S$ being perfect. The diagram
    \begin{center}
        \begin{tikzcd}
            \SH(S_0') \arrow[d, "f^*", swap] \arrow[dr, "\Phi_{S_0'}"] \\
            \SH(S_0) \arrow[r,swap, "\Phi_{S_0}"] & \SH^{\pf}(S)
        \end{tikzcd}
    \end{center}
    commutes by naturality of $\Phi$.
\end{Rem}

\subsection{$\Perf$-fibered perfect motivic spaces and spectra}
\label{Subsec:fibred-spaces-spectra}
We use the theory of $\Sch$-fibered spaces and spectra from \cite{Khan-SH_cdh}: we write $\uH(S_0)$ for the symmetric monoidal $\infty$-category of $\A^1$-invariant Nisnevich sheaves on $\SchfpSn$, with associated pointed category $\uH_\bullet(S_0)$, and put $\uSH(S_0) \coloneqq \uH_\bullet(S_0)[\T^{-1}]$. The functor $\iota^* \colon \uSH(S_0) \to \SH(S_0)$ obtained by restriction along the inclusion $\iota \colon \Sm_{S_0} \to \SchfpSn$ admits fully faithful left and right adjoints $\iota_! \dashv \iota^* \dashv \iota_*$ obtained by Kan extensions (and localization in the case of $\iota_!$).\footnote{In \cite{Khan-SH_cdh} the left adjoint is written $\L\iota_!$ but we omit the localization from the notation.} We will now give a perfect analogue of this picture.

Let $\uHpf(S)$ be the symmetric monoidal $\infty$-category of $\barA$-invariant Nisnevich sheaves on $\PerfpfpS$. Write $\uHpf_\bullet(S) \coloneqq \uHpf(S)_{S/}$ for the pointed $\infty$-category and $\uSHpf(S)$ for the associated $\infty$-category of $\barT$-spectra in $\uHpf_\bullet(S)$. We write
\[ (\Sigma^\infty_{\barT} \dashv \Omega^\infty_{\barT}) \colon \uHpf_\bullet(S) \rightleftarrows \uSHpf(S) \]
for the associated adjunction.

Write also $\iota \colon \Sm_S^{\pf} \to \PerfpfpS$ for the inclusion. By similar arguments as in the proofs of \cite[Lem.~1, Prop.~2]{Khan-SH_cdh}, we obtain adjunctions $\iota_! \dashv \iota^* \dashv \iota_*$ of the form
\begin{center}
\begin{tikzcd}
\Hpf(S) \arrow[r, bend left, "\iota_!"] \arrow[r, bend right, "\iota_*", swap] & \uHpf(S), \arrow[l, "\iota^*", swap]
\end{tikzcd}
\end{center}
such that $\iota^*$ is precomposition with $\iota$, $\iota_*$ is right Kan extension, and $\iota_!$ is induced by left Kan extension and localization. Since $\iota$ is fully faithful, so is $\iota_*$, hence so is $\iota_!$. These functors stabilize to adjunctions 
\begin{center}
\begin{tikzcd}
\SHpf(S) \arrow[r, bend left, "\iota_!"] \arrow[r, bend right, "\iota_*", swap] & \uSHpf(S), \arrow[l, "\iota^*", swap]
\end{tikzcd} 
\end{center}
such that $\iota^*$ and $\iota_*$ commute with $\Omega^\infty_{\barT}$ and $\iota_!$ commutes with $\Sigma^\infty_{\barT}$. All the basic functionality of these constructions mentioned in \cite[\S 1.7]{Khan-SH_cdh} go through after replacing the necessary terms with their perfect analogues. 

Write $\rho \colon \SchfpSn \to \PerfpfpS$ for the perfection functor. We have an adjunction
\[ (\uphi \dashv \upsi) \colon \uH(S_0) \rightleftarrows \uHpf(S) \]
where $\upsi$ is precomposition with $\rho$ and $\uphi$ is induced by left Kan extension and localization. As before, we obtain an associated adjuntion $\uphi \dashv \upsi$ on pointed objects, which stabilizes to an adjunction
\[ (\uPhi \dashv \uPsi) \colon \uSH(S_0) \rightleftarrows \uSHpf(S) \]
such that $\uPhi \Sigma^\infty_{\T} \simeq \Sigma^\infty_{\barT} \uphi$, and such that $\uPhi f_0^* \simeq f^* \uPhi$ for any morphism $f_0 \colon T_0 \to S_0$ of $\Fp$-schemes with perfection $f$.

As before, we may add a subscript and write $\uphi_{S_0},\uPhi_{S_0},...$ in order to emphasize the dependency on $S_0$.

\begin{Not}
    Let $\Sm_S^{\pf '}$ be the essential image of the perfection functor $\Sm_S \to \Sm_S^{\pf}$. By Zariski descent, $\Sm_S^{\pf}$ can be replaced by $\Sm_S^{\pf '}$ in the construction of $\H^{\pf}(S)$, which we do without mention when convenient.
\end{Not}

\begin{Lem}
\label{Lem:phi_iota}
    The diagram
    \begin{center}
        \begin{tikzcd}
            \uH(S) \arrow[r, "\iota^*"] \arrow[d, "\uphi"] & \H(S) \arrow[d, "\varphi"] \\
            \uHpf(S) \arrow[r, "\iota^*"] & \Hpf(S)
        \end{tikzcd}
    \end{center}
    commutes via the natural map $\varphi \iota^* \to \iota^* \uphi$.
\end{Lem}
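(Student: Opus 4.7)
The plan is to construct the natural transformation $\mu \colon \varphi \iota^* \to \iota^* \uphi$ as a Beck--Chevalley mate, and then verify it is an equivalence on a set of colimit-generators. The commutative square of source categories
\[
\begin{tikzcd}
    \Sm_S \arrow[r, hook, "\iota"] \arrow[d, "\rho"'] & \Schfp_S \arrow[d, "\rho"] \\
    \Sm_S^{\pf'} \arrow[r, hook, "\iota"'] & \Perfpfp_S
\end{tikzcd}
\]
induces a strict equality $\psi^\PPP \iota^* = \iota^* \upsi^\PPP$ of restriction functors on presheaves, which descends to an equivalence $\psi \iota^* \simeq \iota^* \upsi$ on motivic sheaves. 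Passing to the mate with respect to $\varphi \dashv \psi$, $\uphi \dashv \upsi$, and $\iota_! \dashv \iota^*$ yields $\mu$.

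Both $\varphi \iota^*$ and $\iota^* \uphi$ preserve small colimits, since $\iota^*$ has both a left and a right adjoint while $\varphi, \uphi$ are left adjoints. Thus it suffices to check $\mu$ on a set of colimit-generators of $\uH(S)$, e.g.\ the motivically localized representables $Lh(X_0)$ for $X_0 \in \Schfp_S$. For smooth $X_0 \in \Sm_S$ the check is formal: $Lh(X_0) \simeq \iota_!(L_\sm h(X_0))$ lies in the essential image of the fully faithful $\iota_!$, so combining $\iota^* \iota_! \simeq \id$ with the mate commutativity $\uphi \iota_! \simeq \iota_! \varphi$ and the Yoneda extension $\varphi(L_\sm h(X_0)) \simeq \Lpf_\sm h(X_0^\pf)$ (valid since $X_0^\pf \in \Sm_S^{\pf'}$), both sides of $\mu(Lh(X_0))$ evaluate to $\Lpf_\sm h(X_0^\pf)$.

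The main obstacle will be the general case $X_0 \in \Schfp_S$. Here the pointwise left Kan extension formula at a test object $W^\pf \in \Sm_S^{\pf'}$ expresses the two sides as motivic localizations of colimits indexed by the slice categories $(\Sm_S)_{W^\pf/}$ and $(\Schfp_S)_{W^\pf/}$ respectively, and the inclusion between these is not cofinal at the level of presheaves. To bridge the gap, I would exploit the Frobenius-tower identification $W^\pf \simeq \lim_n W$ combined with finite presentation of $X_0$: any map $W^\pf \to X_0$ factors through some smooth $W \to X_0$, and a filtered-cofinality argument (using that products of smooth schemes are smooth while equalizers are absorbed by $\barA$-invariance and Nisnevich descent) should render the slice inclusion cofinal after motivic localization, thus promoting the presheaf-level natural map to the required equivalence in $\Hpf(S)$.
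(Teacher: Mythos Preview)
Your setup and the smooth case are fine, but the final paragraph contains a genuine mistake that derails the argument. You assert that the inclusion of slice categories
\[
G \colon W^{\pf}/\Sm_S \longrightarrow W^{\pf}/\Schfp_S
\]
is \emph{not} cofinal at the presheaf level and therefore try to repair things using $\barA$-invariance and Nisnevich descent. In fact $G$ \emph{is} initial already on the nose, so the entire detour through motivic localization is unnecessary (and your sketch of that detour, invoking ``equalizers absorbed by $\barA$-invariance'', is not a real argument). The paper proves initiality of $G$ directly via Quillen's Theorem~A: fix a smooth model $W_0$ of $W^{\pf}$ and let $W_n \to S$ be the Frobenius tower. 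For any object $(Y, W^{\pf} \to Y \to V)$ of the comma category $G/V$ (with $V \in \Schfp_S$ fixed), finite presentation of $Y$ forces the map $W^{\pf} \to Y$ to factor through some $W_n$; recording the minimal such $n$ gives an endofunctor $Y \mapsto W_{n_Y}$ on $G/V$ together with a natural transformation to the identity. This deformation-retracts $G/V$ onto a filtered subcategory, hence $G/V$ is contractible and $G$ is initial.

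Once you have this, the pointwise left Kan extension formula immediately gives the stronger presheaf-level equivalence $\varphi^{\PPP}\iota^* \xrightarrow{\sim} \iota^*\underline{\varphi}^{\PPP}$, valid for \emph{every} presheaf on $\Schfp_S$, not just motivic sheaves and not just representables. The paper then transfers this to the motivic level by passing to right adjoints: the presheaf statement is equivalent to $\iota_*\psi^{\PPP} \simeq \underline{\psi}^{\PPP}\iota_*$, and since all four right adjoints here restrict to motivic objects one obtains $\iota_*\psi \simeq \upsi\iota_*$, whence $\varphi\iota^* \simeq \iota^*\uphi$ by taking left adjoints back. This sidesteps entirely the reduction to generators and the non-smooth case that you found problematic.
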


\begin{proof}
    We first do the corresponding statement on presheaves. Let $X \in \Sm_S^{\pf'}$ and consider the functor
    \[ G \colon X / \Sm_S \to X / \Sch_S^{\fp} \]
    induced by inclusion. Take a smooth model $X_0$ of $X$, and let $X_n$ be the scheme over $S$ induced by the $n$-fold iterated Frobenius $X_n \to X_{n-1} \to \dots \to X_0 \to S$. Note that $X_n$ is smooth over $S$ by naturality of the Frobenius and perfectness of $S$. Let $W \in X/\Sch_S^{\fp}$ be given. Since $W$ is finitely presented over $S$ it holds 
    \[ \Hom_S(X,W) \simeq \colim_n \Hom_S(X_n,W), \]
    and the map $X \to W$ factors through some $X_n$, and likewise for any $Y \in G/W$ in the comma category. For $Y \in G/W$ let $n_Y$ be the smallest such $n$. Since for $Y\to Y'$ in $G/W$ it holds $n_Y \geq n_{Y'}$, we get an endofunctor on $G/W$ given by $Y \mapsto X_{n_Y}$, furthermore the maps $X_{n_Y} \to Y$ constitute a natural transformation from this endofunctor to the identity on $G/W$. Hence $G/W$ deformation retracts to a filtered subcategory, and in particular is contractible. Therefore $G$ is initial by Quillen's Theorem A. It follows that the natural map $\varphi^{\PPP} \iota^* \to \iota^* \underline{\varphi}^\PPP$ is an equivalence from the pointwise formula for left Kan extensions. 

    Since $\varphi^\PPP$ commutes with $\iota^*$, it holds that $\psi^\PPP$ commutes with $\iota_*$, hence that $\psi$ commutes with $\iota_*$, hence that $\varphi$ commutes with $\iota^*$.\footnote{Here and in what follows, we abuse some notation and say that $\varphi^\PPP$ commutes with $\iota^*$ to mean that $\varphi^{\PPP} \iota^* \to \iota^* \underline{\varphi}^\PPP$ is an equivalence. Likewise for the other commutativity statements.}
\end{proof}

\begin{Cor}
\label{Cor:psi_ff}
    The functor $\psi_{S} \colon \H^{\pf}_\bullet(S) \to \H_\bullet(S)$ is fully faithful.
\end{Cor}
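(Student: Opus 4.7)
My plan is to deduce full faithfulness of $\psi_S \colon \H_\bullet^\pf(S) \to \H_\bullet(S)$ from full faithfulness of its $\Perf$-fibered analogue $\upsi \colon \uHpf(S) \to \uH(S)$ by combining Lemma~\ref{Lem:phi_iota} with the fact that $\iota_*$ is always fully faithful. It suffices to treat the unpointed case: since $\psi$ preserves the final object, passing to slice categories under $S$ preserves full faithfulness.

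First I would prove the underlying presheaf-level statement. Since $\rho \colon \SchfpSn \to \PerfpfpS$ is a localization at universal homeomorphisms (Proposition~\ref{Prop:perfection_localization}), restriction along $\rho$ identifies $\PPP(\PerfpfpS)$ with the full subcategory of $\PPP(\SchfpSn)$ spanned by presheaves inverting universal homeomorphisms, by the universal property of localization of $\infty$-categories. Hence $\upsi^\PPP$ is fully faithful. This restricts to a fully faithful functor $\upsi \colon \uHpf(S) \to \uH(S)$ because $\upsi^\PPP$ sends Nisnevich sheaves to Nisnevich sheaves (Proposition~\ref{Prop:pNis_vs_Nis}) and sends $\barA$-invariant sheaves to $\A^1$-invariant sheaves, the latter since $\rho(\A^1_S) = \barA \times S$.

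Next I would invoke Lemma~\ref{Lem:phi_iota}. Taking right adjoints of the equivalence $\varphi \iota^* \simeq \iota^* \uphi$ gives a natural equivalence
\[ \iota_* \psi \simeq \upsi \iota_* \]
as functors $\Hpf(S) \to \uH(S)$. Since the inclusion $\iota \colon \Sm_S^\pf \hookrightarrow \PerfpfpS$ is fully faithful, so is its right Kan extension $\iota_* \colon \Hpf(S) \to \uHpf(S)$, and analogously for the unperfected version. The composition $\upsi \iota_*$ is therefore a composition of two fully faithful functors, hence fully faithful. Consequently $\iota_* \psi$ is fully faithful; since $\iota_*$ is in particular conservative and fully faithful, this forces $\psi$ to be fully faithful.

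The only step with any real content is the first one, namely recognizing that $\rho$ being a localization of $\infty$-categories (not merely of ordinary categories) upgrades to full faithfulness on $\infty$-presheaves and survives restriction to the motivic subcategory. The rest is a formal diagram chase via adjunctions, with the compatibility with $\iota^*$ imported directly from Lemma~\ref{Lem:phi_iota}.
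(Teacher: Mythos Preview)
Your argument is correct and follows essentially the same route as the paper: pass to the $\Perf$-fibered setting via the commutation $\iota_*\psi \simeq \upsi\iota_*$ obtained from Lemma~\ref{Lem:phi_iota}, invoke Proposition~\ref{Prop:perfection_localization} to see that $\upsi$ is fully faithful, and conclude using that $\iota_*$ is fully faithful. The only cosmetic difference is that the paper handles the pointed statement directly (noting that $\psi$ and $\iota_*$ preserve terminal objects), whereas you first reduce to the unpointed case and then slice; both are fine.
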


\begin{proof}
    Since $\psi$ and $\iota_*$ preserve terminal objects it holds by Lemma~\ref{Lem:phi_iota} that $\psi$ also commutes with the fully faithful functor $\iota_*$ in the pointed case.  It thus suffices to show that the functor
    $ \upsi \colon \uH_\bullet^{\pf}(S) \to \uH_\bullet(S) $
    is fully faithful, which follows from Proposition~\ref{Prop:perfection_localization}.
\end{proof}

\begin{Lem}
\label{Lem:SHpf_via_basechange}
    The natural map
    \[ \H_\bullet^{\pf}(S) \otimes_{\H_\bullet(S_0)} \SH(S_0) \to \SH^{\pf}(S) \]
    is an equivalence. Consequently, the functor $\Phi_{S_0}$ is obtained from $\varphi_{S_0}$ by base change along $\H_\bullet(S_0) \to \SH(S_0)$.
\end{Lem}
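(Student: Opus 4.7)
The plan is to deduce both statements from a comparison of universal properties in $\PrLo$. The essential input is the identification $\varphi_{S_0}(\T) \simeq \barT$, which I would extract from Lemma~\ref{Lem:phiP_yoneda} together with the definition of $\barT$ as the perfect motivic localization of $(\barP,\infty)$ and the fact that $\T \simeq (\P^1,\infty)$ in $\H_\bullet(S_0)$.

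With this in hand, the first assertion reduces to the general fact that localization at an object in $\PrLo$ is compatible with base change: for any symmetric monoidal colimit-preserving $F \colon \CCC \to \DDD$ and any $X \in \CCC$, the canonical map
\[ \DDD \otimes_\CCC \CCC[X^{-1}] \xrightarrow{\sim} \DDD[F(X)^{-1}] \]
is an equivalence, since both sides corepresent the same functor on $\PrLo$, namely the space of symmetric monoidal left adjoints out of $\DDD$ that carry $F(X)$ to a tensor-invertible object. I would apply this to $F = \varphi_{S_0}$ and $X = \T$, so that the right-hand side becomes $\SH^{\pf}(S) = \H_\bullet^{\pf}(S)[\barT^{-1}]$ precisely by the first step.

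The second statement is then essentially tautological: by construction (Proposition~\ref{Prop:Phi_comm_susp}), $\Phi_{S_0}$ is the unique symmetric monoidal colimit-preserving extension of $\varphi_{S_0}$ to the $\T$-inverted categories, which is precisely the universal property enjoyed by the base change of $\varphi_{S_0}$ along $\H_\bullet(S_0) \to \SH(S_0)$.

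The only genuine technicality, and hence the main obstacle, is setting up rigorously the universal property of $\CCC[X^{-1}]$ and of the tensor product $\DDD \otimes_\CCC \CCC[X^{-1}]$ in the symmetric monoidal setting. For the former I would rely on Robalo's construction \cite[\S 2]{robalo-bridge}, already invoked in Remark~\ref{Rem:SHpf-as-barT-spectra}; for the latter, the standard theory of module categories in $\PrLo$ (and the identification of $(-) \otimes_{\H_\bullet(S_0)} \SH(S_0)$ with the left adjoint to restriction along $\H_\bullet(S_0) \to \SH(S_0)$) suffices. Beyond these foundational inputs, the entire argument is purely formal.
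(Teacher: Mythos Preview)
Your proposal is correct and follows essentially the same approach as the paper: both arguments boil down to comparing universal properties in $\PrLo$, using that $\varphi_{S_0}(\T) \simeq \barT$ so that $\otimes$-inverting $\T$ on the $\H_\bullet(S_0)$-side matches $\otimes$-inverting $\barT$ on the perfect side. The paper additionally first reduces to the case $S = S_0$ via the classical identification $\SH(S) \simeq \H_\bullet(S) \otimes_{\H_\bullet(S_0)} \SH(S_0)$, but this is a cosmetic convenience rather than a substantive difference---your direct application of the base-change formula $\DDD \otimes_\CCC \CCC[X^{-1}] \simeq \DDD[F(X)^{-1}]$ subsumes it.
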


\begin{proof}
    Since $\SH(S) \simeq \H_\bullet(S) \otimes_{\H_\bullet(S_0)} \SH(S_0)$ it suffices to do the case $S=S_0$. Now this follows from comparing universal properties.
\end{proof}

\begin{Prop}
\label{Prop:Psi_ff}
    The functor $\Psi_{S_0} \colon \SHpf(S) \to \SH(S_0)$ is fully faithful.
\end{Prop}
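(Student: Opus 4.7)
The natural strategy is to deduce the stable fully faithfulness from its unstable analogue Corollary~\ref{Cor:psi_ff} via the base-change description of $\Phi_{S_0}$ supplied by Lemma~\ref{Lem:SHpf_via_basechange}. Informally: the $\barT$-stabilization of a localization is still a localization.

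Concretely, first I would observe that Corollary~\ref{Cor:psi_ff} says $\psi_{S_0}\colon \Hpf_\bullet(S)\to \H_\bullet(S_0)$ is fully faithful, so the counit $\varphi_{S_0}\psi_{S_0}\to \id$ is an equivalence and $\varphi_{S_0}$ is a localization in $\PrL$ (in fact in $\CAlg(\PrL)$, since $\varphi_{S_0}$ is symmetric monoidal). Then Lemma~\ref{Lem:SHpf_via_basechange} identifies $\Phi_{S_0}$ with the base change $\varphi_{S_0}\otimes_{\H_\bullet(S_0)}\id_{\SH(S_0)}\colon \SH(S_0)\to \Hpf_\bullet(S)\otimes_{\H_\bullet(S_0)}\SH(S_0)\simeq \SHpf(S)$, so it suffices to show that this base change is again a localization.

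For this, I would invoke the general fact that localizations in $\CAlg(\PrL)$ are preserved under tensor products. One way to see this: a morphism $L\colon A\to B$ in $\CAlg(\PrL)$ has a fully faithful right adjoint if and only if it is an epimorphism there, i.e., the fold map $B\otimes_A B\to B$ is an equivalence. Tensoring this equivalence over $A$ with any $A$-algebra $C$ yields $(B\otimes_A C)\otimes_C(B\otimes_A C)\simeq B\otimes_A C$, so $C\to B\otimes_A C$ is again an epimorphism, hence a localization. Applied with $A=\H_\bullet(S_0)$, $B=\Hpf_\bullet(S)$, $C=\SH(S_0)$ this yields that $\Phi_{S_0}$ is a localization, whence $\Psi_{S_0}$ is fully faithful.

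The only step requiring outside input is the general base-change preservation of epimorphisms in $\CAlg(\PrL)$; I expect this to be standard (cf.\ Lurie's \emph{Higher Algebra}) and not to present any real obstacle.
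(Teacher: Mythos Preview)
Your overall strategy---deduce the stable statement from the unstable one by arguing that base change along $\H_\bullet(S_0)\to\SH(S_0)$ preserves localizations---is exactly the paper's approach. The paper invokes \cite[Prop.~1.3.14, Lem.~1.5.4]{annala-iwasa-motivic-spectra} for this step, and sketches a model-theoretic variant in Remark~\ref{Rem:prespectra_localizations}; your phrasing via stability of epimorphisms under cobase change in $\CAlg(\PrL)$ is another way to package the same fact.

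There is, however, a gap in your first step. Corollary~\ref{Cor:psi_ff} establishes only that $\psi_S\colon \Hpf_\bullet(S)\to\H_\bullet(S)$ is fully faithful, i.e.\ the case of a \emph{perfect} base. Its proof (via Lemma~\ref{Lem:phi_iota}) hinges on the fact that over a perfect base the iterated Frobenius twists $X_n$ of a smooth $S$-scheme $X_0$ remain in $\Sm_S$, furnishing a cofinal filtered system in the relevant comma category; over a non-perfect $S_0$ the absolute Frobenius is not an $S_0$-morphism, this argument breaks down, and the paper does not claim that $\psi_{S_0}$ is fully faithful at the unstable level. Your base-change argument, as written, therefore only yields that $\Psi_S$ is fully faithful.

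The paper closes this gap at the \emph{stable} level: it factors $\Psi_{S_0}\simeq\rho_*\circ\Psi_S$ through $\SH(S)$ and invokes \cite[Prop.~2.2.1]{Elmanto-Khan-perfection} for the fully faithfulness of $\rho_*\colon\SH(S)\to\SH(S_0)$. Inserting this reduction before your argument---and then running your base-change step over the perfect base, using Lemma~\ref{Lem:SHpf_via_basechange} in the case $S_0=S$---makes the proof complete and essentially identical to the paper's.
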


\begin{proof}
    Let $\rho \colon S \to S_0$ be the perfection. Recall that we have a factorization
    \[ \Psi_{S_0} \simeq \rho_* \Psi_S \colon \SH^{\pf}(S) \to \SH(S) \to \SH(S_0).\]
    Now $\rho_*$ is fully faithful \cite[Prop.~2.2.1]{Elmanto-Khan-perfection}, hence we may assume without loss of generality that $S=S_0$. This case follows from \cite[Prop.~1.3.14, Lem.~1.5.4]{annala-iwasa-motivic-spectra} by Corollary~\ref{Cor:psi_ff}.
\end{proof}

\begin{Rem}
\label{Rem:prespectra_localizations}
    The key in the proof of Proposition~\ref{Prop:Psi_ff} is \cite[Lem.~1.5.4]{annala-iwasa-motivic-spectra} which is a general result on the preservation of localizations in the passage to spectra. We sketch a model-theoretic argument for the interested reader.

    Let $f \colon \CCC \to \CCC'$ be a symmetric mononoidal left adjoint functor between presentably symmetric monoidal $\infty$-categories, with fully faithful right adjoint $g$. Let $T \in \CCC$ and $T' \coloneqq f(T) \in \CCC'$ be symmetric objects.  Write $\Sigma_T \coloneqq T \otimes (-)$ for the endofunctor with right adjoint $\Omega_T \coloneqq \Homu_\CCC(T,-)$, and write $\Sp^\N(\CCC,T)$ for the $\infty$-category of $T$-prespectra in $\CCC$, as in \cite[\S 2C]{BachmannNotes}. Likewise for $\CCC'$. We first briefly explain how the method of prolongation from \cite[\S 5]{hovey-symmetric-spectra} induces for us an adjunction
    \[ (\Sp(f) \dashv \Sp(g)) \colon \Sp^\N(\CCC,T) \rightleftarrows \Sp^\N(\CCC',T'), \]
    where $\Sp(g)$ is fully faithful, together with a commutative diagram
    \begin{equation}
    \label{Eq:Presp_comm}
        \begin{tikzcd}
            \CCC \arrow[r] \arrow[d, "f"] & \Sp^\N(\CCC,T) \arrow[r] \arrow[d, "\Sp(f)"] & \CCC[T^{-1}] \arrow[d, "F"] \\
            \CCC' \arrow[r]  & \Sp^\N(\CCC',T') \arrow[r] & \CCC'[{T'}^{-1}],
        \end{tikzcd}
    \end{equation}
    where the horizontal arrows are the natural functors, and $F$ is obtained from $f$ by the universal property of $\otimes$-inverting $T$. 

    The existence of $\Sp(f) \dashv \Sp(g)$ follows from \cite[Lem.~5.3]{hovey-symmetric-spectra}, via the dictionary between presentably symmetric monoidal $\infty$-categories on one hand, and simplicial, combinatorial and left proper symmetric monoidal model categories on the other \cite[\S 1.5.2]{robalo-bridge}, \cite[Thm.~1.1, Thm.~2.8]{NikolausPresentably}, \cite[\S A.3.7]{LurieHTT}. As prolongation on the level of model categories, $\Sp(g)$ sends a $T'$-spectra $(Y_n)_{n \in \N}$ to the $T$-spectrum $(gY_n)_{n \in \N}$, with bonding maps the adjoint of the composition $gY_i \to g\Omega_{T'}Y_{i+1} \simeq \Omega_TgY_{i+1}$ for all $i$. Likewise, $\Sp(f)$ sends $(X_n)_{n \in \N}$  to $(fX_n)_{n \in \N}$ with obvious bonding maps. It follows that $\Sp(f) \dashv \Sp(g)$ is a Quillen reflection on the level of model categories, hence that $\Sp(g)$ is also fully faithful on the level of $\infty$-categories. The left-hand square in (\ref{Eq:Presp_comm}) commutes on the level of model categories, where the functors involved are left Quillen. It follows that the associated square on the level of $\infty$-categories also commutes.

    For the commutativity of the right-hand square in (\ref{Eq:Presp_comm}), observe that the functor $\Sp^\N(\CCC,T) \to \CCC[T^{-1}]$ is the localization functor at the stable equivalences, i.e., that  $\CCC[T^{-1}]$ is the full subcategory of $\Omega$-spectra in $\Sp^\N(\CCC,T)$, likewise for $\CCC'$ \cite[Rem.~2.3]{BachmannNotes}, \cite[Thm.~2.26]{robalo-bridge}. Since $\Sp(g)$ preserves $\Omega$-spectra, there is a natural functor $F_2 \colon \CCC[T^{-1}] \to \CCC'[{T'}^{-1}]$ that makes the right-hand square in (\ref{Eq:Presp_comm}) commute. Now the identification from \cite[Thm.~2.26]{robalo-bridge} is symmetric monoidal, hence $F_2$ is also the unique functor induced by $\otimes$-inverting $T$. Hence $F \simeq F_2$ as claimed.

    For formal reasons, the functor $F$ has a right adjoint $G$. From the commutativity (\ref{Eq:Presp_comm}), we conclude that $G$ is fully faithful, as it is the restriction of $\Sp(g)$. Taking $f \coloneqq \varphi_S$ thus recovers Proposition~\ref{Prop:Psi_ff}, again using  \cite[Prop.~2.2.1]{Elmanto-Khan-perfection} and Corollary~\ref{Cor:psi_ff}.
\end{Rem}

\subsection{Perfection of closed immersions}
For a category $\CCC$ with initial object $\varnothing \in \CCC$, denote by $\PPP_\varnothing(\CCC)$ the full subcategory of $\PPP(\CCC)$ spanned by \emph{reduced} presheaves, i.e., by those $F$ satisfying $F(\varnothing)\simeq *$. Observe that $\Lpf$ factors as
\[ \PPP(\Sm^\pf_S) \to \PPP_\varnothing(\Sm^\pf_S) \to \H^\pf(S), \]
where the first arrow is the left adjoint to the inclusion $\PPP_\varnothing(\Sm^\pf_S) \to \PPP(\Sm^\pf_S)$. Note that pushforward along any morphism restricts to reduced presheaves.

\begin{Lem}
\label{Lem:varnothing_presheaves_lower_star}
Let $i \colon Z \to S$ be a closed immersion of perfect schemes. The functor $i_{*} \colon \PPP_\varnothing(\Sm_Z^\pf) \to \PPP_\varnothing(\Sm_S^\pf)$ commutes with $\Lpf$. Consequently, the functor $i_* \colon \H_\bullet^\pf(Z) \to \H_\bullet^\pf(S)$ preserves colimits.
\end{Lem}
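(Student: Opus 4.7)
The plan is to prove the commutation $\Lpf\circ i_*\simeq i_*\circ\Lpf$ by showing (i) $i_*$ preserves motivically local objects and (ii) $i_*$ preserves motivic local equivalences; these together imply that the natural map $i_*F\to i_*(\Lpf F)$ is a local equivalence into a local target, giving the desired identification with $\Lpf(i_*F)$. Preservation of reducedness is immediate since $\varnothing\times_S Z=\varnothing$, so the statement descends to $\PPP_\varnothing$.

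For (i), Nisnevich sheaf-ness is preserved because perfect Nisnevich squares in $\Sm_S^{\pf}$ pull back along $T\mapsto T_Z\coloneqq T\times_S Z$ to perfect Nisnevich squares in $\Sm_Z^{\pf}$. The identity $\bar{\A}^1_S\times_S Z\simeq\bar{\A}^1_Z$ gives $(T\times_S\bar{\A}^1_S)_Z\simeq T_Z\times_Z\bar{\A}^1_Z$, whence $\bar{\A}^1$-invariance is preserved.

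For (ii), first observe that $i_*$ preserves all small presheaf colimits: $i_*F(T)=F(T_Z)$ and presheaf colimits are sectionwise. Since the class of $\Lpf$-equivalences is the saturated class generated under colimits and $2$-out-of-$3$ by the Nisnevich descent maps $h_U\sqcup_{h_W}h_V\to h_X$ (for perfect Nisnevich squares in $\Sm_Z^{\pf}$) and the $\bar{\A}^1$-contractions $h_{Y\times_Z\bar{\A}^1_Z}\to h_Y$, it suffices to show that $i_*$ sends these generators to local equivalences in $\PPP(\Sm_S^{\pf})$. For Nisnevich descent I argue on stalks: Nisnevich points of $\Sm_S^{\pf}$ are perfect Henselian local $S$-schemes $\xi$, and the stalk map reduces by base change along $\xi\to S$ to Nisnevich descent on $\Sm_Z^{\pf}$ at the Nisnevich point $\xi_Z=\xi\times_S Z$ (perfect Henselian local, or empty in which case the descent is trivial).

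The main obstacle is the $\bar{\A}^1$-generator. Since $i_*$ preserves finite products, its image is the projection $i_*h_Y\times i_*h_{\bar{\A}^1_Z}\to i_*h_Y$, and by the Cartesian monoidality of $\Lpf$ it suffices to show $i_*h_{\bar{\A}^1_Z}\simeq\ast$ in $\H^{\pf}(S)$. This presheaf is $T\mapsto\oO(T_Z)$, which is \emph{not} $h_{\bar{\A}^1_S}$ since $\oO(T)\twoheadrightarrow\oO(T_Z)$ need not be injective. The contraction is built explicitly: combine the restriction map $h_{\bar{\A}^1_S}\to i_*h_{\bar{\A}^1_Z}$, $s\mapsto s|_{T_Z}$, with the multiplication $\bar{\A}^1\times\bar{\A}^1\to\bar{\A}^1$ to obtain an $\bar{\A}^1_S$-homotopy $i_*h_{\bar{\A}^1_Z}\times h_{\bar{\A}^1_S}\to i_*h_{\bar{\A}^1_Z}$, $(t,s)\mapsto t\cdot s|_{T_Z}$, which is the identity at $s=1$ and the zero map at $s=0$, yielding $i_*h_{\bar{\A}^1_Z}\simeq\ast$. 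The consequently statement then follows: colimits in $\H_\bullet^{\pf}(Z)$ are $\Lpf$-localisations of pointed presheaf colimits, $i_*$ commutes with both $\Lpf$ (just shown) and presheaf colimits, and $i_*\ast\simeq\ast$ preserves basepoints; hence $i_*\colon\H_\bullet^{\pf}(Z)\to\H_\bullet^{\pf}(S)$ preserves colimits.
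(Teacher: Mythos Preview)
Your argument is correct. The paper does not give a detailed proof but defers to Khan's argument in \cite[Thm.~3.1.1, Cor.~3.1.2]{KhanMorelVoevodsky}, merely noting that the two inputs needed there---that \'etale morphisms lift Nisnevich-locally along a closed immersion, and that a scheme \'etale over a perfect base is perfect---hold in the perfect setting. Your proof instead unwinds the argument directly and is self-contained.

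The structure is the same in both cases: one shows that $i_*$ on (reduced) presheaves preserves both motivically local objects and motivic local equivalences. The difference lies in how the Nisnevich part is handled. Khan's approach uses the \'etale lifting property to control $i_*$ structurally (essentially to show that $i_*$ intertwines Nisnevich sheafification, by relating the small Nisnevich sites of $Z$ and $S$). You bypass this: since $i_*$ on presheaves is precomposition with $T\mapsto T_Z$ and hence preserves all colimits, you reduce to the generating Nisnevich-square maps and then check on stalks. This works because the objects involved are $0$-truncated (pushouts of monomorphisms of representables), the Nisnevich $1$-topos has enough points, and $\xi\times_S Z$ is Henselian local or empty whenever $\xi$ is; no explicit \'etale lifting is invoked. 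For the $\barA$-part, your explicit multiplication homotopy showing $i_*h_{\barA_Z}$ is $\barA$-contractible is the same device one finds in Khan (and in Hoyois's and Morel--Voevodsky's treatments).

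So your route is a genuine alternative: more elementary and independent of the cited reference, at the cost of the stalk argument requiring the truncatedness observation to be airtight (which it is here). The paper's route is shorter on the page but presupposes familiarity with Khan's proof and singles out exactly which of its hypotheses need verification in the perfect world.
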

\begin{proof}
Note that \'{e}tale morphisms can be lifted Nisnevich-locally along $i$, and a scheme \'{e}tale over a perfect base is perfect. Hence the same argument as in the proof of \cite[Thm.~3.1.1]{KhanMorelVoevodsky} goes through for the first claim, and the same argument as in \cite[Cor.~3.1.2]{KhanMorelVoevodsky} for the second claim.
\end{proof}

\begin{Lem}
\label{Lem:push_phi}
    Let $f \colon X \to S$ be a morphism of perfect schemes. Then the diagram
    \begin{center}
        \begin{tikzcd}
            \PPP(\Sm_X) \arrow[r, "f_*"] \arrow[d, "\varphi_X^\PPP"] & \PPP(\Sm_S) \arrow[d, "\varphi_S^\PPP"] \\
            \PPP(\Sm_X^{\pf '}) \arrow[r, "f_*"]  &  \PPP(\Sm_S^{\pf '})
        \end{tikzcd}
    \end{center}
    commutes via the natural transformation $\varphi_S^{\PPP} f_* \to f_* \varphi_X^{\PPP}$. Likewise for the corresponding statement on $\PPP_\varnothing(-)$.
\end{Lem}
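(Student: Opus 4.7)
The plan is to reduce to representables via colimit-preservation, and then identify both sides with a common presheaf using Lemma~\ref{Lem:phi_iota}.

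Since both $X$ and $S$ are perfect, base change along $f$ commutes with perfection, producing the strictly commutative square of categories
\begin{center}
\begin{tikzcd}
\Sm_S \arrow[d, "\rho_S"'] \arrow[r, "(-)\times_S X"] & \Sm_X \arrow[d, "\rho_X"] \\
\Sm_S^{\pf'} \arrow[r, "(-)\times_S X"'] & \Sm_X^{\pf'}
\end{tikzcd}
\end{center}
Precomposition yields the strict identity $\psi_S^{\PPP} f_* = f_* \psi_X^{\PPP}$ of functors $\PPP(\Sm_X^{\pf'}) \to \PPP(\Sm_S)$; the natural transformation $\alpha \colon \varphi_S^{\PPP} f_* \to f_* \varphi_X^{\PPP}$ of the statement is the canonical map associated to this identity under the adjunctions $\varphi^{\PPP} \dashv \psi^{\PPP}$ (concretely, adjoint to $f_*$ applied to the unit of $\varphi_X^{\PPP} \dashv \psi_X^{\PPP}$). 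Both $\varphi_S^{\PPP} f_*$ and $f_* \varphi_X^{\PPP}$ preserve colimits, since $\varphi^{\PPP}$ is a left adjoint and $f_*$ is pointwise precomposition on presheaves. It therefore suffices to check that $\alpha$ is an equivalence on the representables $h_X(Y_1)$ for $Y_1 \in \Sm_X$.

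For such $Y_1$, viewed as an $S$-scheme via $Y_1 \to X \xrightarrow{f} S$, the scheme-theoretic adjunction between base change and the forgetful functor $\Sch_X \to \Sch_S$ yields
\[
f_* h_X(Y_1)(Y_0) = \Hom_{\Sch_X}(Y_0 \times_S X, Y_1) \simeq \Hom_{\Sch_S}(Y_0, Y_1)
\]
naturally in $Y_0 \in \Sm_S$, identifying $f_* h_X(Y_1) \simeq \iota^* \underline{h}_S(Y_1)$ where $\iota \colon \Sm_S \hookrightarrow \Sch_S^{\fp}$ and $\underline{h}_S$ denotes Yoneda on $\Sch_S^{\fp}$. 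A parallel computation using the perfection adjunction gives $f_* h_X(Y_1^{\pf}) \simeq \iota^* \underline{h}_S(Y_1^{\pf})$ on $\Sm_S^{\pf'} \hookrightarrow \Perfpfp_S$. The presheaf-level version of Lemma~\ref{Lem:phi_iota} (implicit in its proof) supplies $\varphi_S^{\PPP} \iota^* \simeq \iota^* \uphi_S^{\PPP}$, and the direct analogue of Lemma~\ref{Lem:phiP_yoneda} on the site $\Sch_S^{\fp}$ gives $\uphi_S^{\PPP} \underline{h}_S(Y_1) \simeq \underline{h}_S(Y_1^{\pf})$. Chaining these equivalences,
\[
\varphi_S^{\PPP} f_* h_X(Y_1) \simeq \iota^* \uphi_S^{\PPP} \underline{h}_S(Y_1) \simeq \iota^* \underline{h}_S(Y_1^{\pf}) \simeq f_* h_X(Y_1^{\pf}) = f_* \varphi_X^{\PPP} h_X(Y_1),
\]
and tracing through the construction confirms this realizes $\alpha$ at $h_X(Y_1)$.

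The reduced version on $\PPP_\varnothing$ will follow verbatim, since $f_*$ preserves reducedness ($f_* F(\varnothing_S) = F(\varnothing_X) = *$) and $\varphi^{\PPP}$ preserves reducedness by Lemma~\ref{Lem:phiP_yoneda} ($\varphi_S^{\PPP} h(\varnothing) \simeq h(\varnothing^{\pf}) = h(\varnothing)$). The main subtlety I anticipate is the identification $f_* h_X(Y_1) \simeq \iota^* \underline{h}_S(Y_1)$, which implicitly requires $Y_1 \to S$ to be finitely presented; this is automatic whenever $f$ is itself finitely presented, which covers the applications in this paper. For a truly arbitrary morphism $f$ of perfect schemes, one argues instead directly from the pointwise formula for $\varphi_S^{\PPP}$, using cofinality of the Frobenius-twist sequential subsystem of a smooth model of $Z$ to identify $\colim_{(Y_0,\, u \colon Z \to Y_0^{\pf})} \Hom_{\Sch_X}(Y_0 \times_S X, Y_1)$ with $\Hom_{\Sch_X}(Z \times_S X, Y_1)$.
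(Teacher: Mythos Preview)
Your main argument contains a genuine error: the claimed identity
\[
\Hom_{\Sch_X}(Y_0 \times_S X, Y_1) \simeq \Hom_{\Sch_S}(Y_0, Y_1)
\]
is false. The adjunction between the forgetful functor $\Sch_X \to \Sch_S$ and base change $(-)\times_S X$ has the forgetful functor as the \emph{left} adjoint, so it yields $\Hom_{\Sch_S}(Z,W) \simeq \Hom_{\Sch_X}(Z, W \times_S X)$ for $Z \in \Sch_X$, $W \in \Sch_S$---the opposite direction from what you use. Concretely, $f_* h_X(Y_1)$ is the Weil restriction of $Y_1$ along $f$, not the presheaf represented by $Y_1$ viewed over $S$. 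For instance, with $S = \Spec \Fp$, $X = \A^1_S$, $Y_1 = \A^1_X$, and $Y_0 = S$, one has $\Hom_{\Sch_X}(Y_0 \times_S X, Y_1) \cong \Fp[t]$ while $\Hom_{\Sch_S}(Y_0, Y_1) \cong \Fp^2$. Hence your identification $f_* h_X(Y_1) \simeq \iota^* \underline{h}_S(Y_1)$ fails, and the chain of equivalences via Lemma~\ref{Lem:phi_iota} collapses.

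Your fallback in the final sentence, however, is essentially what the paper does, and it works without any finite-presentation hypothesis on $f$. The paper argues directly that for each $T \in \Sm_S^{\pf'}$ the base-change functor $G \colon T/\Sm_S \to T_X/\Sm_X$ is initial, via the same Frobenius-twist deformation retract as in the proof of Lemma~\ref{Lem:phi_iota}; this gives the equivalence $\varphi_S^\PPP(f_* F)(T) \simeq (f_* \varphi_X^\PPP F)(T)$ for \emph{all} $F$ simultaneously. Your reduction to representables is harmless but unnecessary: once the colimit is computed along the Frobenius-twist subsystem, the identification with $\Hom_{\Sch_X}(T_X, Y_1)$ only needs that $Y_1 \in \Sm_X$ is finitely presented over $X$, which is automatic from the paper's convention on $\Sm$.
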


\begin{proof}
    Let $T \in \Sm_S^{\pf '}$. Consider the functor
    \[ G \colon T /\Sm_S \to T_X / \Sm_X \]
    on comma categories induced by pullback along $f$. By a similar argument as in the proof of Lemma~\ref{Lem:phi_iota} one shows that $G$ is initial, whence the first claim.

    For the corresponding statement on $\PPP_\varnothing(-)$, note that all functors in the given diagram restrict to reduced presheaves.
\end{proof}

\begin{Lem}
\label{Lem:Phi_commutes_i}
    Let $i_0\colon Z_0 \to S_0$ be a closed immersion of $\Fp$-schemes with perfection $i \colon Z \to S$. Then the diagram
    \begin{center}
        \begin{tikzcd}
            \SH(Z_0) \arrow[r, "{i_0}_*"] \arrow[d, "\Phi_{Z_0}"'] & \SH(S_0) \arrow[d, "\Phi_{S_0}"] \\
            \SH^{\pf}(Z) \arrow[r, "i_*"] & \SH^{\pf}(S)
        \end{tikzcd}
    \end{center}
    commutes via the natural transformation $\Phi_{S_0}{i_0}_* \to i_* \Phi_{Z_0}$.
\end{Lem}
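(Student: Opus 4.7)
The plan is to reduce the statement from $\SH^\pf$ down to the level of reduced presheaves on perfectly smooth schemes, where it will follow from a cofinality argument modelled on Lemma~\ref{Lem:push_phi}.

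First I would reduce to the unstable pointed setting. Using the $\Omega_{\barT}$-spectrum description from Remark~\ref{Rem:SHpf-as-barT-spectra}, we may model $\SH^\pf(S)$ as the limit $\lim(\H^\pf_\bullet(S) \xleftarrow{\Omega_{\barT}} \H^\pf_\bullet(S) \xleftarrow{\Omega_{\barT}} \cdots)$ in $\Cat_\infty$ (and likewise for $\SH(S_0)$). Both functors involved in the claim act levelwise on this presentation: $i_*$ commutes with $\Omega_{\barT}$ since $i^*$ is symmetric monoidal and hence commutes with $\Sigma_{\barT}$, and $\Phi$ commutes with $\Omega^\infty_{\barT}$ by Proposition~\ref{Prop:Phi_comm_susp} in its right-adjoint form. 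Hence it suffices to establish that the natural transformation
\[ \varphi_{S_0}\,(i_0)_* \to i_*\,\varphi_{Z_0} \]
of functors $\H^\pf_\bullet(Z) \to \H_\bullet(S_0)$ is an equivalence. Since $\psi$ and $i_*$ preserve terminal objects, the pointed statement reduces to the unpointed one, and by Lemma~\ref{Lem:varnothing_presheaves_lower_star} together with the fact that $\varphi$ is a left adjoint (hence commutes with $\Lpf$), this further reduces to the statement on reduced presheaves: for every $F \in \PPP_\varnothing(\Sm_Z^\pf)$ the natural map $\varphi_{S_0}^\PPP\,(i_0)_*^\PPP F \to i_*^\PPP\,\varphi_{Z_0}^\PPP F$ is an equivalence.

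Fix $T \in \Sm_S^{\pf'}$ with base change $T_Z \coloneqq T \times_S Z \in \Sm_Z^{\pf'}$. Unfolding both sides via the pointwise formula for left Kan extensions, what needs to be shown is that the functor
\[ G \colon T/\Sm_{S_0} \to T_Z/\Sm_{Z_0}, \qquad (T \to X_0) \mapsto (T_Z \to X_0 \times_{S_0} Z_0) \]
is initial. By Quillen's Theorem~A, this amounts to showing that for each smooth $Z_0$-scheme $Y_0$ and each $T_Z \to Y_0$, the comma category of factorizations through $G$ is contractible; I would argue it is cofiltered. The existence of factorizations relies on two ingredients: (i)~smooth schemes over $Z_0$ admit smooth extensions to $S_0$, i.e.\ after replacing $Y_0$ by a Zariski cover we can find $X_0 \in \Sm_{S_0}$ with $X_0 \times_{S_0} Z_0 \cong Y_0$ (a standard consequence of smoothness, together with descent for Nisnevich covers which hold in $\H^\pf$), and (ii)~a map $T_Z \to Y_0$ can be promoted to a map $T \to X_0$ over $S_0$, using that $T$ is the perfection of a smooth $S_0$-scheme together with formal smoothness of $X_0$ applied to the infinitesimal thickenings witnessing the perfection of $T_Z$.

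The main obstacle is exactly Step~(ii): concretely, a map $T_Z \to Y_0$ corresponds, via perfectly finitely presented descent, to a map $T_{0,n} \times_{S_0} Z_0 \to Y_0$ from some Frobenius twist of a smooth model $T_0 \to S_0$, and one must lift this to a map $T_{0,n} \to X_0$ after possibly further increasing~$n$. This is the kind of deformation-theoretic lifting argument familiar from Elkik-style extension of smooth morphisms, and together with cancellation in the comma category it gives the required cofiltered structure, hence the initiality of $G$. The resulting presheaf-level equivalence then lifts, by the reductions above, to the asserted equivalence $\Phi_{S_0}\,(i_0)_* \simeq i_*\,\Phi_{Z_0}$.
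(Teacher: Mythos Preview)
Your reduction from spectra to spaces has a genuine error. You claim that $\Phi$ acts levelwise on the $\Omega$-spectrum presentation because it ``commutes with $\Omega^\infty_{\barT}$ by Proposition~\ref{Prop:Phi_comm_susp} in its right-adjoint form'', but that proposition gives $\psi\,\Omega^\infty \simeq \Omega^\infty\,\Psi$, not anything about $\Phi$ and $\Omega^\infty$. The left adjoint $\Phi$ commutes with $\Sigma^\infty$, not with $\Omega^\infty$, so it does not act levelwise on the limit diagram; conversely, $(i_0)_*$ does not commute with $\Sigma_\T$, so the colimit description does not help either. (Relatedly, your stated domain and codomain ``$\H_\bullet^\pf(Z) \to \H_\bullet(S_0)$'' for $\varphi_{S_0}(i_0)_*$ do not typecheck; the functors go $\H_\bullet(Z_0) \to \H^\pf_\bullet(S)$.) The paper stabilizes instead via Lemma~\ref{Lem:SHpf_via_basechange}: after reducing to perfect base, $\SH^\pf(S)$ is identified with $\H_\bullet^\pf(S)\otimes_{\H_\bullet(S)}\SH(S)$, so the square on spectra is the base change of the square on pointed spaces.

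The presheaf-level argument also has a gap. For the initiality of $G\colon T/\Sm_{S_0}\to T_Z/\Sm_{Z_0}$ you need, given $Y_0\in\Sm_{Z_0}$ with a map $T_Z\to Y_0$, to produce $X_0\in\Sm_{S_0}$ restricting to $Y_0$ together with a compatible $T\to X_0$. Already the extension of $Y_0$ exists only Nisnevich-locally on $S_0$, so the comma category may be empty and the presheaf-level initiality fails as stated. More seriously, your ``Step~(ii)'' proposes to lift $T_{0,n}\times_{S_0}Z_0\to Y_0$ to $T_{0,n}\to X_0$ using formal smoothness, but $Z_0\hookrightarrow S_0$ is not an infinitesimal thickening, so neither formal smoothness nor Elkik-type arguments apply. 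The paper sidesteps all of this by first reducing to $S_0=S$ and $Z_0=Z$ perfect, using $\Phi_{S_0}\simeq\Phi_S\rho^*$ and ordinary base change in $\SH$; then $(i_0)_*=i_*$ is a single functor between perfect bases, and the required presheaf statement is exactly Lemma~\ref{Lem:push_phi}, whose Frobenius-twist cofinality argument needs no extension or lifting across $i$ at all.
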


\begin{proof}
    By base change for $\SH$ and since $\Phi_{S_0} \simeq \Phi_S \rho^*$ for $\rho \colon S \to S_0$ the perfection, we reduce to the case where $S$ and $Z$ are perfect. 

    We first do the corresponding statement on (perfect) motivic spaces: that $\varphi_S i_* \simeq i_*\varphi_Z$. Consider the diagram
    \begin{center}
        \begin{tikzcd}
            \PPP_\varnothing(\Sm_Z) \arrow[r, "\L_Z"] \arrow[d, "\varphi_Z^{\PPP}"] & \H(Z) \arrow[d, "\varphi_Z"] \arrow[r, "i_*"] & \H(S) \arrow[d, "\varphi_S"] \\
            \PPP_\varnothing(\Sm_Z^{\pf'}) \arrow[r, "\L^{\pf}_Z"] & \H^{\pf} (Z) \arrow[r, "i_*"] & \H^{\pf}(S).
        \end{tikzcd}
    \end{center}
    Since the left-hand square commutes by definition and $\L_Z$ is a localization, it suffices to show that the outer square commutes. By Lemma \ref{Lem:varnothing_presheaves_lower_star} (and its imperfect analogue), this outer square is equivalent to the outer square of the diagram
    \begin{center}
        \begin{tikzcd}
            \PPP_\varnothing(\Sm_Z) \arrow[r, "i_*"] \arrow[d, "\varphi_Z^{\PPP}"] & \PPP_\varnothing(\Sm_S) \arrow[d, "\varphi_S^{\PPP}"] \arrow[r, "\L_S"] & \H(S) \arrow[d, "\varphi_S"] \\
            \PPP_\varnothing(\Sm_Z^{\pf'}) \arrow[r, "i_*"] & \PPP_\varnothing(\Sm_S^{\pf'}) \arrow[r, "\L_S^{\pf}"] & \H^{\pf}(S).
        \end{tikzcd}
    \end{center}
    In this diagram, the left-hand square commutes by Lemma \ref{Lem:push_phi}, and the right-hand square commutes as before. Whence the claim.

    The pointed case follows from the unpointed case since the functors in question preserve final objects. Now the statement on (perfect) motivic spectra follows from the statement on pointed (perfect) motivic spaces, since the diagram on spectra is obtained from the corresponding diagram on pointed spaces via base change along $\H_\bullet(S) \to \SH(S)$ by Lemma~\ref{Lem:SHpf_via_basechange}.
\end{proof}

\subsection{Perfect Thom spaces}
Let $p_0 \colon E_0 \to S$ be a vector bundle with zero section $s_0 \colon S \to E_0$. Write $p \colon E \to S$ and $s \colon S \to E$ for their perfections.

\begin{Not}
    Define the \emph{perfect Thom twist} as
    \[ \Sigma^{E} \coloneqq p_\sharp s_* \colon \H^{\pf}_\bullet(S) \to \H^{\pf}_\bullet(S), \]
    and put $S^{E} \coloneqq \Sigma^{E}(1_S)$.
\end{Not}

\begin{Lem}
\label{Lem:thom_stabl_htpy_inv}
    The morphism $p^* \colon \SH^\pf(S) \to \SH^{\pf}(E)$ is fully faithful, and the functor $\Sigma^E$ is an equivalence.
\end{Lem}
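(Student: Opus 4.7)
The plan is to prove the two claims by reducing to the case of a trivial bundle $E = \barA^n \times S$ via Nisnevich descent, leveraging the smooth base change and projection formulas from Proposition~\ref{Prop:perfect-SH-functor}.

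First I would address the fully faithfulness of $p^*$. Since $p$ is perfectly smooth we have $p_\sharp \dashv p^*$, and fully faithfulness is equivalent to the counit $p_\sharp p^* \to \mathrm{id}$ being an equivalence. By the smooth projection formula, this counit factors as
\[ p_\sharp p^* X \simeq p_\sharp(1_E) \otimes X \to 1_S \otimes X = X, \]
the displayed map being induced by the canonical arrow $p_\sharp(1_E) \to 1_S$. So the claim reduces to showing $p \colon E \to S$ is an equivalence in $\Hpf(S)$. For a trivial bundle this is immediate from the $\barA$-invariance built into $\Hpf$; the general case follows by choosing a Zariski trivialization of $E$ and invoking Nisnevich descent for $\Hpf$ (which $\SHpf$ inherits by construction).

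Next I would tackle the equivalence $\Sigma^E$. Writing $j \colon U := E \setminus s(S) \hookrightarrow E$ for the open complement of the zero section, the localization triangle $j_\sharp j^* Y \to Y \to s_* s^* Y$ in $\SHpf(E)$, applied to $Y = p^* X$ and using $s^* p^* \simeq \mathrm{id}$, becomes
\[ j_\sharp (pj)^* X \to p^* X \to s_*(X). \]
Applying the colimit-preserving functor $p_\sharp$ and invoking the smooth projection formula for $p$ and $pj$ yields a cofiber sequence
\[ U_+ \otimes X \to E_+ \otimes X \to \Sigma^E(X), \]
so that $\Sigma^E(X) \simeq S^E \otimes X$ with $S^E := \mathrm{cofib}(U_+ \to E_+)$ in $\SHpf(S)$. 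It thus suffices to show $S^E$ is $\otimes$-invertible. Since invertibility can be checked Nisnevich-locally by descent, and the Thom construction is compatible with smooth base change, we reduce to trivial bundles. For $E = \barA^n \times S$ one then identifies $S^E \simeq \barT^{\wedge n}$ as the perfect analogue of $\A^n/(\A^n \setminus 0) \simeq \T^{\wedge n}$, transferred from the classical category through the colimit-preserving symmetric monoidal functor $\varphi_S$. Since $\barT$ is $\otimes$-invertible in $\SHpf(S)$ by construction, this concludes the proof.

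The main obstacle will be the use of the localization triangle for the closed immersion $s$: this is part of the gluing axiom of the coefficient system being established, so one must ensure Thom stability follows gluing in the logical order of the exposition. Failing that, one could emulate the strategy of Hoyois~\cite{HoyoisSix}, who establishes Thom stability directly in the classical setting; his arguments should transfer to the perfect setting with only cosmetic modifications. A secondary point is to verify that $\otimes$-invertibility descends in $\SHpf$, which follows from the essential uniqueness of tensor inverses together with Nisnevich descent.
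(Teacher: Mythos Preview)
Your argument for fully faithfulness of $p^*$ is correct and gives a clean direct proof using only the projection formula from Proposition~\ref{Prop:perfect-SH-functor} and $\barA$-invariance. The paper instead deduces it from the classical statement: since $\Phi$ commutes with $\sharp$-pushforwards, passing to right adjoints gives $p_0^*\,\Psi_S \simeq \Psi_{E_0}\,p^*$, and fully faithfulness of $p^*$ then follows from that of $p_0^*$, $\Psi_S$, and $\Psi_{E_0}$ (Proposition~\ref{Prop:Psi_ff}). Either route is fine.

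For the invertibility of $\Sigma^E$ you have correctly flagged the circularity: the localization sequence for $s$ is part of Corollary~\ref{Cor:SH_motivic_coefficient_systems}, whose proof cites the present lemma for Thom stability. The paper avoids this by again transferring from $\SH$. Since $\Phi$ commutes with $\sharp$-pushforwards (Proposition~\ref{Prop:Phi_comm_susp}) and with $i_*$ for closed immersions (Lemma~\ref{Lem:Phi_commutes_i}), one has $\Sigma^E \circ \Phi_S \simeq \Phi_S \circ \Sigma^{E_0}$. In $\SH(S)$ the functor $\Sigma^{E_0}$ is tensoring with the invertible object $\Th(E_0)$, and $\Phi_S$ is symmetric monoidal, so this reads $\Sigma^E \circ \Phi_S \simeq \bigl(\Phi_S\Th(E_0)\otimes(-)\bigr)\circ\Phi_S$; since $\Phi_S$ is a localization (Proposition~\ref{Prop:Psi_ff}), precomposition with it is fully faithful on endofunctor categories, whence $\Sigma^E$ is tensoring with the invertible object $\Phi_S\Th(E_0)$.

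This is the idea your proposal is missing: rather than rebuilding purity, Thom spaces, and their local triviality inside $\SHpf$, one exploits that $\Psi$ is fully faithful and that $\Phi$ intertwines the relevant functors to inherit these facts from $\SH$ for free. Your fallback of emulating \cite{HoyoisSix} would work but duplicates effort; the comparison machinery of Proposition~\ref{Prop:Psi_ff} and Lemma~\ref{Lem:Phi_commutes_i} is set up precisely to make this unnecessary. (Alternatively, one may observe that the proof of localization in Corollary~\ref{Cor:SH_motivic_coefficient_systems} does not itself invoke Thom stability, so a reordering of the exposition would also salvage your argument; but the paper's transfer argument is shorter.)
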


\begin{proof}
    Since $\Phi$ commutes with $(-)_\sharp$, the diagram
    \begin{center}
        \begin{tikzcd}
            \SH(E_0)  & \SH(S) \arrow[l,swap, "p_0^*"] \\
            \SH^{\pf}(E)  \arrow[u, "\Psi_{E_0}"] & \SHpf(S) \arrow[u,swap, "\Psi_{S}"] \arrow[l,swap, "p^*"]
        \end{tikzcd}
    \end{center}
    commutes. Whence the first claim follows since $\Psi_S, \Psi_{E_0}$ are fully faithful by Proposition \ref{Prop:Psi_ff}.

    The diagram 
    \begin{center}
        \begin{tikzcd}
            \SH(S) \arrow[d, "\Phi_S"] \arrow[r, "{s_0}_*"] & \SH(E_0) \arrow[d, "\Phi_{E_0}"] \arrow[r, "{p_0}_\sharp"] & \SH(S) \arrow[d, "\Phi_S"] \\
            \SH^{\pf}(S) \arrow[r, "s_*"] & \SH^{\pf}(E) \arrow[r, "p_\sharp"] & \SH^{\pf}(S)
        \end{tikzcd}
    \end{center}
    commutes by Lemma~\ref{Lem:Phi_commutes_i}. Since $\Phi_S$ is a localization by Proposition \ref{Prop:Psi_ff} the second claim follows since ${p_0}_\sharp \circ {s_0}_*$ is an equivalence.
\end{proof}

\subsection{$\SH^{\pf}$ as coefficient system}

\begin{Cor}
\label{Cor:SH_motivic_coefficient_systems}
    The functor
    \[ \SH^\pf((-)^{\pf}) \colon \SchFp^\op \to \PrLotimesst \]
    is a coefficient system on $\SchFp$. In fact it is the initial coefficient system which inverts universal homeomorphisms.
\end{Cor}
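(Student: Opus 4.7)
The plan is two-fold: first verify that $\SHpf\circ(-)^\pf$ satisfies the five axioms of a coefficient system, then deduce the universal property from the one for $\SH$. Four of the five axioms are already established in the preceding material: smooth base change and the smooth projection formula come from Proposition~\ref{Prop:perfect-SH-functor}, while homotopy invariance and Thom stability are Lemma~\ref{Lem:thom_stabl_htpy_inv}. The remaining piece I would prove is the gluing theorem, for a closed-open complementary pair $i_0\colon Z_0\to X_0$ and $j_0\colon U_0\to X_0$ in $\SchFp$ with perfections $i,j$: namely, that $i_*\colon\SHpf(Z)\to\SHpf(X)$ is fully faithful, that $j^*i_*\simeq 0$, and that the pair $(i^*,j^*)$ is jointly conservative.

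My strategy for gluing is to transfer it from $\SH$ via the adjunction $\Phi\dashv\Psi$. By Proposition~\ref{Prop:Psi_ff}, $\Psi$ is fully faithful, hence conservative since $\Phi\Psi\simeq\id$. The relations $\Phi f_0^*\simeq f^*\Phi$ (naturality of $\Phi$), $\Phi j_{0\sharp}\simeq j_\sharp\Phi$ (Proposition~\ref{Prop:Phi_comm_susp}), and $\Phi i_{0*}\simeq i_*\Phi$ (Lemma~\ref{Lem:Phi_commutes_i}) pass to right adjoints to yield
\[ \Psi i^*\simeq i_0^*\Psi, \quad \Psi j^*\simeq j_0^*\Psi, \quad \Psi i_*\simeq i_{0*}\Psi, \quad \Psi f_*\simeq f_{0*}\Psi. \]
Using these, full faithfulness of $i_*$ in $\SHpf$ reduces through $\Psi$ to the known full faithfulness of $i_{0*}$ in $\SH$; the computation $\Psi j^*i_*\simeq j_0^*i_{0*}\Psi\simeq 0$ combined with conservativity of $\Psi$ gives $j^*i_*\simeq 0$; and joint conservativity of $(i^*,j^*)$ transfers analogously from the known gluing in $\SH$.

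For the initiality claim, I would invoke the universal property of $\SH(-)$ as the initial coefficient system on $\SchFp$ \emph{à la} Robalo (as used in \cite{KhanMorelVoevodsky}). By Lemma~\ref{Lem:Bhatt.3.8}, the target $\SHpf\circ(-)^\pf$ itself inverts universal homeomorphisms, since $u^\pf$ is an isomorphism for every universal homeomorphism $u$. At each base $S_0$ the functor $\Phi_{S_0}$ is a Bousfield localization because its right adjoint $\Psi_{S_0}$ is fully faithful; combined with Proposition~\ref{Prop:perfection_localization}, which identifies perfection with the categorical localization at universal homeomorphisms, this presents $\SHpf(S)$ as the universal quotient of $\SH(S_0)$ that inverts universal homeomorphisms. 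Any coefficient system $\EEE$ inverting universal homeomorphisms receives a unique morphism $\lambda\colon\SH\to\EEE$ from initiality of $\SH$, and pointwise $\lambda_{S_0}$ then factors uniquely through $\Phi_{S_0}$. The hardest part will be to check that these pointwise factorizations assemble coherently into a morphism of coefficient systems, i.e.\ that the uniqueness propagates across the compatibilities $f^*\lambda_{S_0}\simeq\lambda_{T_0}f_0^*$ for every morphism $f_0$ in $\SchFp$.
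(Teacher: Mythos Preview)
Your overall strategy for the gluing/localization axiom---transferring it from $\SH$ through the adjunction $\Phi\dashv\Psi$ using Proposition~\ref{Prop:Psi_ff} and Lemma~\ref{Lem:Phi_commutes_i}---is precisely the paper's approach. There is, however, a small gap in your derivation of the $\Psi$-side relations. Passing to right adjoints in $\Phi i_{0*}\simeq i_*\Phi$ yields $i_0^{!}\Psi\simeq\Psi i^{!}$, \emph{not} $i_0^{*}\Psi\simeq\Psi i^{*}$: the right adjoint of $i_{0*}$ is $i_0^{!}$, not $i_0^{*}$. Among your four displayed identities, only the last three are legitimately obtained by taking right adjoints of the three relations you list; the first one is not, and you lean on it (at least implicitly) for full faithfulness of $i_*$ and for joint conservativity.

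The paper sidesteps this entirely by proving a different (equivalent) formulation of localization: that the kernel of $j^*$ lies in the essential image of $i_*$. Given $E\in\SHpf(S)$ with $j^*E\simeq 0$, one uses only $j_0^*\Psi\simeq\Psi j^*$ to get $j_0^*\Psi_S E\simeq 0$, then localization in $\SH$ gives $F\in\SH(Z_0)$ with $i_{0*}F\simeq\Psi_S E$, and finally $i_*\Phi_Z F\simeq\Phi_S i_{0*}F\simeq\Phi_S\Psi_S E\simeq E$ via Lemma~\ref{Lem:Phi_commutes_i} and $\Phi\Psi\simeq\id$. No commutation of $\Psi$ with $i^*$ is ever invoked. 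Your argument can be repaired along the same lines: full faithfulness of $i_*$ follows from $i^*i_*E\simeq i^*\Phi\Psi i_*E\simeq\Phi i_0^*i_{0*}\Psi E\simeq\Phi\Psi E\simeq E$, and joint conservativity similarly by applying $\Phi$ rather than $\Psi$.

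For initiality, the paper refers the argument back to the one for $\SH$ \cite[Rem.~2.14]{KhanVoevodsky}, but cites Proposition~\ref{Prop:cdh_to_idh} (the characterization of idh-sheaves among cdh-sheaves) as the key input, rather than Proposition~\ref{Prop:perfection_localization} as you do. Your factorization argument through $\Phi$ is reasonable in outline, and you are right to flag the coherence of the pointwise factorizations as the genuine content; the paper does not spell this out either.
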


\begin{proof}
     The functor $\SH^{\pf}((-)^\pf)$ satisfies homotopy invariance and Thom stability by Lemma~\ref{Lem:thom_stabl_htpy_inv}. Initiality will follow by a similar argument as for $\SH$, cf.\ \cite[Rem.~2.14]{KhanVoevodsky}, using Proposition~\ref{Prop:cdh_to_idh}. It remains to show localization.

    So let $i_0 \colon Z_0 \to S_0$ be a closed immersion with open complement $j_0 \colon U_0 \to S_0$, and perfections $i \colon Z \to S$ and $j \colon U \to S$. Consider the commutative diagram
    \begin{center}
        \begin{tikzcd}
            \SH(Z) \arrow[r, "i_*"] & \SH(S) \arrow[r, "j^*"] & \SH(U) \\
            \SH^{\pf}(Z) \arrow[u, "\Psi_Z"] \arrow[r, "i_*"] & \SH^{\pf}(S) \arrow[u, "\Psi_S"] \arrow[r, "j^*"] & \SH^{\pf}(U). \arrow[u, "\Psi_U"]
        \end{tikzcd}
    \end{center}
    Let $E \in \SH^{\pf}(S)$ such that $j^*E \simeq 0$ be given. Then
    \[ j^* \Psi_S E \simeq \Psi_U j^*E \simeq 0, \]
    hence there is some $F \in \SH(Z)$ such that $i_*F \simeq \Psi_S E$, and it holds
    \[ i_* \Phi_Z F \simeq \Phi_S i_* F \simeq \Phi_S \Psi_S E \simeq E, \]
    by Lemma~\ref{Lem:Phi_commutes_i} and Proposition~\ref{Prop:Psi_ff}, whence the claim.
\end{proof}

\section{Comparison results}
\label{Sec:comparison}
Let $S_0$ be an $\Fp$-scheme with perfection morphism $\rho \colon S \to S_0$. In this section, we shall see the equivalences in the following commutative diagram:
\begin{center}
    \begin{tikzcd}
        \SH(S_0) \arrow[r] \arrow[d] 
        & \SH_\idh(S_0) \arrow[r,"\simeq","\alpha"'] \arrow[d,"\simeq"]
        & \SH^\pf(S) \arrow[d,"\simeq","\gamma"']
        \\
        \SH(S_0)[1/p] \arrow[r,"\simeq","\beta"']
        & \SH_\idh(S_0)[1/p] \arrow[r,"\simeq","\alpha'"']
        & \SHpf(S)[1/p].
    \end{tikzcd}
\end{center}
Invertibility of $\beta$ (Proposition~\ref{Prop:SH=SHidh-away-from-p}) combines Khan's argument for showing that $\SH(S_0)\simeq\SH_\cdh(S_0)$ \cite{Khan-SH_cdh} with Elmanto--Khan's result that universal homeomorphisms induce equivalences in $\SH(-)[1/p]$ \cite[Thm.~2.1.1]{Elmanto-Khan-perfection}. 
Invertibility of $\alpha$ hence of $\alpha'$ (Proposition~\ref{Prop:SHidh=SHpf}) uses the comparison of the $\idh$-topos and the $\cdh$-topos (Proposition~\ref{Prop:cdh_to_idh}). 
Finally, we prove invertibility of $\gamma$ (Proposition~\ref{Prop:SHpf-p-invertible}).
Moreover:
\begin{itemize}
    \item The map $\Phi_{S} \colon \SH(S) \to \SHpf(S)$ is not invertible in general: take $S = \Spec \F_p$, and note that $\K_1(\G_m) \simeq \K_1(\F_p)\oplus\K_0(\F_p)\simeq \F_p^\times\oplus\Z$ is not $p$-divisible, meanwhile the higher K-groups of perfect schemes are $\Z[1/p]$-modules \cite[Thm.~1.1]{AntieauPerfectoid}.
    \item The map $\rho^*\colon \SH(S_0) \to \SH(S)$ is not invertible in general: take $S_0 = \G_m$ and consider the same computation as in the previous point.
\end{itemize}

\begin{Prop} \label{Prop:SHpf-p-invertible}
    For any object $X\in\SHpf(S)$, the multiplication-by-$p$ map $X\to[\cdot p]X$ is an equivalence. Hence the inclusion functor 
    \[  \SHpf(S)[1/p] \to \SHpf(S) \]
    is an equivalence which is inverse to its left-adjoint $\gamma$.
\end{Prop}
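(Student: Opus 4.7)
The plan is to reduce to showing that the integer $p$ acts invertibly on the unit $\mathbf{1}\in\SHpf(S)$; the statement for an arbitrary object $X$ is then obtained by writing $p\cdot\mathrm{id}_X=\mathrm{id}_X\otimes(p\cdot\mathrm{id}_\mathbf{1})$, and the final equivalence $\SHpf(S)[1/p]\simeq\SHpf(S)$ is then formal (the localization collapses since it is already an equivalence on every object).

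The key observation is that in $\Sm^\pf_S$, the $p$-th power morphism $[p]\colon\overline{\G}_m\to\overline{\G}_m$ is an isomorphism. Indeed, over $\F_p$ the morphism $[p]\colon\G_m\to\G_m$, $x\mapsto x^p$, coincides with the absolute Frobenius of $\G_{m,\F_p}$ (both induce the $\F_p$-algebra endomorphism $x\mapsto x^p$ on $\F_p[x,x^{-1}]$). Hence on the perfection $\overline{\G}_m$ the morphism $[p]$ is the Frobenius of a perfect scheme, which is an isomorphism by Lemma~\ref{Lem:Bhatt.3.8} (or by definition of perfectness). Applying the symmetric monoidal functor $\Sigma^\infty_{\overline{\T}}(-)_+$, we obtain an equivalence on $\Sigma^\infty_{\overline{\T}}(\overline{\G}_m)_+\simeq\mathbf{1}\oplus\Sigma^\infty_{\overline{\T}}(\overline{\G}_m,1)$ that splits as $\mathrm{id}_\mathbf{1}\oplus[p]$, since $[p]$ preserves the basepoint $1$. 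Thus $[p]$ is an equivalence on $\Sigma^\infty_{\overline{\T}}(\overline{\G}_m,1)$.

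Using the identification $\overline{\T}\simeq S^1\wedge(\overline{\G}_m,1)$ in $\Hpf_\bullet(S)$, which comes from the perfect Zariski cover of $\overline{\P}^1$ by two copies of $\overline{\A}^1$ glued along $\overline{\G}_m$ in exact analogy with the classical case, the object $\Sigma^\infty_{\overline{\T}}(\overline{\G}_m,1)$ is $\otimes$-invertible in $\SHpf(S)$. Consequently $[p]$ corresponds to a unit $\lambda\in\pi_{0,0}\mathbf{1}_{\SHpf(S)}^\times$ via the canonical identification of the endomorphism ring of an invertible object with $\End_{\SHpf(S)}(\mathbf{1})$.

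The main obstacle is the final step, identifying $\lambda$ with multiplication by the integer $p$ on $\mathbf{1}$. Classically, by a theorem of Morel, the $p$-th power map on $\Sigma^\infty(\G_m,1)$ acts as the \emph{motivic integer} $p_\epsilon=1+\epsilon+\cdots+\epsilon^{p-1}$, where $\epsilon$ is the swap automorphism of $\overline{\G}_m\wedge\overline{\G}_m$; in general $p_\epsilon\neq p$. Following Bachmann's suggestion that inverting the Frobenius inverts $p$, the plan is to exploit the relation $(1+\epsilon)\cdot p_\epsilon=p\cdot(1+\epsilon)$ in $\pi_{0,0}\mathbf{1}$ (which follows from $(1+\epsilon)\epsilon^i=1+\epsilon$) together with the additional structural identities forced by the Frobenius-invertibility built into $\SHpf(S)$ to deduce that invertibility of $p_\epsilon$ in $\pi_{0,0}\mathbf{1}_{\SHpf(S)}$ is equivalent to invertibility of $p$. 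Once this identification is secured, $p\cdot\mathrm{id}_\mathbf{1}$ is an equivalence in $\SHpf(S)$, and both conclusions of the proposition follow.
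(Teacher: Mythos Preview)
Your strategy matches the paper's: reduce to showing $p$ is invertible in $\End_{\SHpf(S)}(\mathbf{1})$, observe that the $p$-th power map on $\overline{\G}_m$ is the Frobenius hence an isomorphism, and recall that under Morel's identification this map acts on $\Sigma^\infty(\G_m,1)$ as $p_\epsilon$ rather than $p$. You also correctly flag the remaining obstacle, namely passing from ``$p_\epsilon$ is a unit'' to ``$p$ is a unit''.

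The gap is precisely in that last step. The relation $(1+\epsilon)p_\epsilon = p(1+\epsilon)$ is true, but by itself it does not let you cancel: $(1+\epsilon)$ is in general a zero-divisor (indeed $(1+\epsilon)^2=2(1+\epsilon)$ from $\epsilon^2=1$), and you have not said what ``additional structural identities forced by Frobenius-invertibility'' would close the argument. As stated this is a plan, not a proof.

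The paper fills this gap by first reducing to $S=\Spec\F_p$ via functoriality, and then using a hard input: the ring isomorphism
\[
\pi_0\Hom_{\SH(\F_p)}(\G_m,\G_m)\;\simeq\;\K^{\mathrm{MW}}_0(\F_p)\;\simeq\;\mathrm{GW}(\F_p).
\]
In $\mathrm{GW}(\F_p)$ one has the concrete identity $p=p_\epsilon+\tfrac{p-1}{2}h$ with $h=\langle 1\rangle-\langle -1\rangle$ and $h^2=0$; equivalently, $p-p_\epsilon$ lies in the kernel of the rank map $\mathrm{GW}(\F_p)\to\Z$, which for a finite field is a square-zero ideal. Thus $p-p_\epsilon$ is nilpotent, so for \emph{any} ring homomorphism out of $\mathrm{GW}(\F_p)$ the images of $p$ and $p_\epsilon$ are simultaneously units. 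Applying this to the ring map induced by $\Phi_{\F_p}$ into $\pi_0\End_{\SHpf(\F_p)}(\overline{\G}_m)$, where $p_\epsilon$ lands on the invertible Frobenius, gives that $p$ is a unit there; $\otimes$-invertibility of $\overline{\G}_m$ then transfers this to $\End(\mathbf{1})$.

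Note that the nilpotence of $p-p_\epsilon$ is \emph{not} a formal consequence of $\epsilon^2=1$: it uses the arithmetic of $\F_p$ (essentially that $-1$ is a sum of two squares, forcing $2\langle 1\rangle=2\langle -1\rangle$). So you really need the reduction to $\Spec\F_p$ and the explicit description of $\mathrm{GW}(\F_p)$; your general $(1+\epsilon)$-identity is not enough.
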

\begin{proof}
    First, let $S=\Spec(\F_p)$ and $X=\bar\G_m$. Recall that there are ring isomorphisms
    \begin{align*}
     \pi_0\Hom_{\SH(\F_p)}(\G_m,\G_m) \simeq \K^\mathrm{MW}_0(\F_p)  \simeq \mathrm{GW}(\F_p)
     \end{align*}
    by \cite[Cor.~C.9]{AyoubRealisationEtale} and  \cite[Lem.~3.10]{morel-A1-book}.\footnote{Note that $\K^\mathrm{MW}_0(F) \simeq \mathrm{GW}(F)$ for every field $F$ in \emph{any} characteristic, when $\mathrm{GW}(F)$ is defined as the ring of isomorphism classes of non-degenerate symmetric bilinear forms, cf.\ \cite[p.~52f.]{morel-A1-book}.}
    Under these isomorphisms, the Frobenius $\Frob_{\G_m}$ corresponds to the element $p_\epsilon \coloneqq \sum_{i=1}^p \langle (-1)^{i-1} \rangle$ in $\K^\mathrm{MW}_0(\F_p)$ \cite[Proof of Cor.~C.5, p.~142]{AyoubRealisationEtale}, cf.\ also \cite[Ex.~4.5]{richarz-scholbach-frobenius}.
    Note that $p = p_\epsilon + \tfrac{p-1}{2}\cdot h$ for the hyperbolic form $h:=\langle 1\rangle - \langle -1\rangle$ satisfying $h^2=0$, hence that $p-p_\epsilon$ is nilpotent.
    Alternatively one can argue that the element $p-p_\epsilon$ is nilpotent as it lies in the kernel of the ring homomorphism $\mathrm{rk} \colon \mathrm{GW}(\F_p) \to \Z$ which is cyclic of order at most 2  \cite[Ex.~2.1.5]{deglise--milnorwitt}.    
    Consequently, for every ring $R$ and every ring homomorphism $f\colon \mathrm{GW}(\F_p) \to R$ it holds that $f(p)\in R^\times$ if and only if $f(p_\epsilon)\in R^\times$.
    
    The functor $\Phi_S$ induces a map
    \[ \Hom_{\SH(S)}(\G_m,\G_m) \to \Hom_{\SHpf(S)}(\bar\G_m,\bar\G_m)\]
    sending $p_\epsilon=\Frob_{\G_m}$ to $\Frob_{\bar\G_m}$. Since the latter is an equivalence, it follows that the multiplication map $\bar\G_m\to[\cdot p]\bar\G_m$ is an equivalence. Since $\bar\G_m$ is $\otimes$-invertible in $\SHpf(\F_p)$, also the endomorphism $p \in \End_{\SH^\pf}(1)$ is invertible, thus the claim for $S=\Spec(\F_p)$ follows.
    
    For an arbitrary $S\to\Spec(\F_p)$ the claim follows by functoriality.
\end{proof}

Recall that $\uH(S_0)$ (resp.\ $\uSH(S_0)$) is the category of $\SchfpSn
$-fibered motivic spaces (resp.\ spectra), and that $\uHpf(S)$ (resp.\ $\uSHpf(S)$) is the perfect analogue, see \S \ref{Subsec:fibred-spaces-spectra}. 
Following Khan \cite[\S 3]{Khan-SH_cdh}, for $\tau \in \{\cdh,\idh\}$ write $\uH_\tau(S_0)$ for the full subcategory of $\uH(S_0)$ spanned by the $\tau$-local objects, and write $\uSH_\tau(S_0)$ for the full subcategory of $\uSH(S_0)$ spanned by $\T$-spectra of $\tau$-local objects. 
Recall that the inclusion $\iota_!  \colon \SH(S) \inj \SHu(S)$ factors as
\[ \SH(S_0) \xrightarrow{\iota_!} \uSH_{\cdh}(S_0) \hookrightarrow \uSH(S_0), \]
and that the essential image is written as $\SH_{\cdh}(S_0)$ \cite[Thm.~9]{Khan-SH_cdh}. Consequently, $\SH_\cdh(S_0)$ is also the essential image of the composition
\[\SH(S_0) \xrightarrow{\iota_!} \uSH(S_0) \xrightarrow{\L_\cdh} \uSH_\cdh(S_0).\]

Write $\uHpf_{\pcdh}(S)$ for the full subcategory of $\uHpf(S)$ spanned by the $\pcdh$-local objects, and $\uSHpf_{\pcdh}(S)$ for the full subcategory of $\uSHpf(S)$ spanned by $\barT$-spectra of $\pcdh$-local objects. 

\begin{Lem}
\label{Lem:iota_SHpcdh}
    The functor $\iota_! \colon \SH^\pf(S) \to \uSHpf(S)$ lands in $\uSHpf_{\pcdh}(S)$.
\end{Lem}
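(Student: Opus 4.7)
The plan is to reduce the statement to Khan's analogous theorem for the cdh-topology \cite{Khan-SH_cdh} via the comparison functor $\Phi \colon \SH(S_0) \to \SHpf(S)$. First, I would establish a Beck--Chevalley-type compatibility $\iota_!^{\pf} \circ \Phi \simeq \uPhi \circ \iota_!$ as functors $\SH(S_0) \to \uSHpf(S)$, obtained by taking left adjoints of the equivalence $\iota^* \circ \uPsi \simeq \Psi \circ \iota^*$; both sides of the latter are restriction along the outer edge of the commuting square with vertices $\Sm_{S_0}^\fp$, $\SchfpSn$, $\Sm_S^\pf$, and $\PerfpfpS$. Since $\Phi$ is essentially surjective as a localization (Proposition~\ref{Prop:Psi_ff}), it suffices to show $\uPhi(\iota_!(E)) \in \uSHpf_\pcdh(S)$ for every $E \in \SH(S_0)$; and by Khan's theorem $\iota_!(E) \in \uSH_\cdh(S_0)$. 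The task thus reduces to verifying that $\uPhi$ sends $\uSH_\cdh(S_0)$ into $\uSHpf_\pcdh(S)$.

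For this key step, take $F \in \uSH_\cdh(S_0)$. By the stable version of Corollary~\ref{Cor:cdh_vs_pcdh}, $\uPhi(F)$ is pcdh-local if and only if its restriction $\uPsi \uPhi(F)$ is cdh-local. Since $\uPsi$ is fully faithful with essential image equal to the universal-homeomorphism-inverting objects of $\uSH(S_0)$ (a stable motivic enhancement of Proposition~\ref{Prop:perfection_localization}), the endofunctor $\uPsi \uPhi$ is the Bousfield localization at universal homeomorphisms. Combined with Proposition~\ref{Prop:cdh_to_idh}, which identifies ``cdh-local plus universal-homeomorphism-inverting'' as idh-local, one obtains $\uPsi \uPhi(F) \simeq \Lidh(F)$ for $F$ cdh-local, and the right-hand side is cdh-local since idh-covers refine cdh-covers.

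The main obstacle lies in the identification $\uPsi \uPhi(F) \simeq \Lidh(F)$ for cdh-local $F$, which amounts to showing that the Bousfield localization $\Lcdh$ and the u.h.-localization on $\uSH(S_0)$ commute. Equivalently, one must verify that $\Lcdh$ preserves universal-homeomorphism-inverting objects -- a fact that follows from Corollary~\ref{Cor:idh=pcdh} (the cdh-sheafification of a u.h.-inverting presheaf corresponds, via $\rho^*$, to the pcdh-sheafification of its underlying object on $\PerfpfpS$). A routine secondary issue is translating Corollary~\ref{Cor:cdh_vs_pcdh} and Proposition~\ref{Prop:cdh_to_idh} from presheaves of spaces to $\barT$-stable motivic spectra; this is streamlined by the hypercompleteness of the pcdh- and idh-topoi (Corollaries~\ref{Cor:pcdh-topos-hypercomplete} and~\ref{Cor:idh-topos-hypercomlete}) under a finite-valuative-dimension hypothesis, reducible to via the Noetherian argument of Remark~\ref{Rem:Noetherian_induction}.
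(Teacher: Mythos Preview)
Your approach matches the paper's: establish the Beck--Chevalley compatibility $\iota_!^{\pf}\Phi \simeq \uPhi\,\iota_!$, invoke Khan's result that the ordinary $\iota_!$ lands in $\uSH_\cdh$, and reduce (via essential surjectivity of $\Phi$) to showing that $\uPhi$ carries $\uSH_\cdh$ into $\uSHpf_\pcdh$. The paper dispatches this last step in a single line, reducing to the unstable $\uphi$ and citing Corollary~\ref{Cor:cdh_vs_pcdh}; your unpacking---reducing via that same corollary to the cdh-locality of $\uPsi\uPhi(F)$, then identifying $\uPsi\uPhi$ with the universal-homeomorphism reflector and bringing in the idh-machinery---is more elaborate than what the paper writes down but is a legitimate way to justify what is left implicit, and the obstacles you flag (commutation of cdh-sheafification with u.h.-localization, passage to spectra) are precisely the technical points hidden behind the paper's terse citation. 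One incidental difference: the paper works over the perfect base $S$ rather than $S_0$, but this is immaterial since $\Phi_{S_0}$ factors through $\Phi_S$ via $\rho^*$.
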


\begin{proof}
    Note that $\iota_!$ commutes with the localization $\Phi$ in the obvious way. Hence, by \cite[Thm.~9]{Khan-SH_cdh} it suffices to show that 
    \[ \uPhi \colon \uSH(S) \to \uSH^\pf(S) \]
    sends $\uSH_\cdh(S)$ to $\uSH^\pf_{\pcdh}(S)$. Since $\uPhi$ is the stabilization of $\uphi$, this follows from Corollary~\ref{Cor:cdh_vs_pcdh}.
\end{proof}

\begin{Def}\label{Def:SHidh}
As in the ordinary case, write $\SHpf_{\pcdh}(S) \subset \uSHpf(S)$ for the essential image of $\iota_!$. Likewise, define $\SH_\idh(S_0)$ as the essential image of the composition $\SH(S_0) \xrightarrow{\iota_!} \uSH(S_0) \xrightarrow{\L_\idh} \uSH_\idh(S_0)$.
\end{Def}

\begin{Prop} \label{Prop:SHidh=SHpf}
The perfection $\rho \colon \SchfpSn \to \PerfpfpS$ induces equivalences
\[ \SH_\idh(S_0) \simeq \SH^\pf_\pcdh(S) \simeq \SHpf(S). \]
\end{Prop}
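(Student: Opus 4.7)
The plan is to deduce both equivalences by transporting Khan's comparison \cite{Khan-SH_cdh} between $\SH$ and its $\cdh$-fibered analogue $\uSH_\cdh$ to the perfect setting, with the topos equivalence $\Sh_\idh(\SchfpSn) \simeq \Sh_\pcdh(\PerfpfpS)$ from Corollary~\ref{Cor:idh=pcdh} playing the central role.

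For the first equivalence $\SH_\idh(S_0) \simeq \SHpf_\pcdh(S)$, I would first promote the topos equivalence to the unstable motivic setting. Since $\barA$ is the perfection of $\A^1$ and $\idh$-equivalences are precisely universal homeomorphisms (Lemma~\ref{Lem:idh-equivalences=universal-homeomorphisms}), the equivalence of topoi restricts to an equivalence $\uH_\idh(S_0) \simeq \uHpf_\pcdh(S)$ on the $\A^1$- respectively $\barA$-invariant subcategories. This is compatible with pointing and sends $\T$ to $\barT$, so passing to $\T$/$\barT$-spectra yields $\uSH_\idh(S_0) \simeq \uSHpf_\pcdh(S)$. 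Under this equivalence, the essential image of $\SH(S_0) \xrightarrow{\iota_!} \uSH(S_0) \xrightarrow{\L_\idh} \uSH_\idh(S_0)$ matches the essential image of $\SHpf(S) \xrightarrow{\iota_!} \uSHpf_\pcdh(S)$: indeed, $\iota_!$ commutes with perfection by functoriality of left Kan extensions along the compatible square $\Sm_{S_0} \hookrightarrow \SchfpSn$, $\Sm_S^{\pf} \hookrightarrow \PerfpfpS$, and $\Phi_{S_0}$ is essentially surjective onto $\SHpf(S)$ because $\Psi_{S_0}$ is fully faithful by Proposition~\ref{Prop:Psi_ff}.

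For the second equivalence $\SHpf_\pcdh(S) \simeq \SHpf(S)$, I need to show that $\iota_! \colon \SHpf(S) \to \uSHpf(S)$ is fully faithful, since its essential image lies in $\uSHpf_\pcdh(S)$ by Lemma~\ref{Lem:iota_SHpcdh} and equals $\SHpf_\pcdh(S)$ by definition. Following Khan's strategy, it suffices that the assignment $Y \mapsto \SHpf(Y)$ is a sheaf of $\infty$-categories on $(\PerfpfpS, \pcdh)$. Nisnevich descent is built in, and by Proposition~\ref{Prop:pcdp_via_pblups} the remaining ingredient is descent for perfect abstract blow-up squares, which should be a standard consequence of smooth and perfectly proper base change combined with the closed/open localization sequence supplied by the coefficient system structure established in Corollary~\ref{Cor:SH_motivic_coefficient_systems}.

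The main obstacle is this last descent step: one must verify that the classical derivation of cdh-descent from proper base change and the localization triangle carries over with perfectly proper morphisms playing the role of proper ones. Granted Corollary~\ref{Cor:SH_motivic_coefficient_systems}, this is a formal diagram chase in the spirit of Khan's argument, but one must be careful about the perfect/reduced subtleties in Definition~\ref{Def:pcdh-square} when invoking the localization sequence along the closed immersion of a perfect abstract blow-up square.
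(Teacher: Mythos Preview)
Your treatment of the first equivalence $\SH_\idh(S_0) \simeq \SHpf_\pcdh(S)$ is essentially the paper's own proof: promote the topos equivalence of Corollary~\ref{Cor:idh=pcdh} through $\barA$-localization and $\barT$-stabilization to get $\uSH_\idh(S_0)\simeq\uSHpf_\pcdh(S)$, and then match essential images using that $\Phi$ is a localization (Proposition~\ref{Prop:Psi_ff}) and commutes with $\iota_!$.

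For the second equivalence, however, you are working much harder than necessary and there is a mismatch between what you say you need and what your proposed argument would actually deliver. The functor $\iota_! \colon \SHpf(S) \to \uSHpf(S)$ is already known to be fully faithful: this is recorded in \S\ref{Subsec:fibred-spaces-spectra} as a formal consequence of the full faithfulness of $\iota \colon \Sm_S^\pf \hookrightarrow \PerfpfpS$, exactly as in Khan's Lem.~1 and Prop.~2. Combined with Lemma~\ref{Lem:iota_SHpcdh}, which you yourself invoke, the equivalence $\SHpf_\pcdh(S)\simeq\SHpf(S)$ is then literally the definition of $\SHpf_\pcdh(S)$ as the essential image of $\iota_!$. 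This is how the paper dispatches it in one line.

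Your proposed route---establishing $\pcdh$-descent for the presheaf of categories $Y\mapsto\SHpf(Y)$ via proper base change and the localization sequence from Corollary~\ref{Cor:SH_motivic_coefficient_systems}---is the perfect analogue of what goes into Khan's Thm.~9, and what that argument buys is that $\iota_!F$ lands in the $\pcdh$-local objects, \emph{not} the full faithfulness of $\iota_!$. In other words you would be re-proving Lemma~\ref{Lem:iota_SHpcdh} from scratch (the paper instead bootstraps it from the classical cdh case via $\uPhi$), while still owing the reader the elementary full faithfulness argument. The paper's closing remark in \S\ref{Sec:comparison} explicitly notes that one does not need the coefficient-system structure for this comparison; your approach reintroduces that dependence.
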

\begin{proof}
The equivalence $\SH^\pf_\pcdh(S) \simeq \SH^\pf(S)$ is true by definition, via Lemma~\ref{Lem:iota_SHpcdh}.
For the remaining equivalence, note that the perfection functor $\rho \colon \SchfpSn \to \PerfpfpS$ induces an equivalence $\rho_! \colon \Sh_\idh(\SchfpSn) \xrightarrow{\sim} \Sh_\pcdh(\PerfpfpS)$ by Corollary~\ref{Cor:idh=pcdh}.
Now $\rho_!(\A^1_{S_0}) \simeq \barA_S$, hence we get an induced equivalence $\underline{\H}_\idh(S_0) \xrightarrow{\sim} \underline{\H}^\pf_\pcdh(S)$.
Similarly, since $\rho_!(\Lmot \P^1_{S_0}) \simeq \Lmot \overline{\P}^1_S$, we get an induced equivalence $\underline{\SH}_\idh(S_0) \xrightarrow{\sim}\underline{\SH}^\pf_\pcdh(S)$.

By construction we have a commutative diagram
\begin{center}
    \begin{tikzcd}
        \SH(S_0) \arrow[d, "\Phi"] \arrow[r, "\iota_!"] & 
        \uSH(S_0) \arrow[r, "\L_\idh"] \arrow[d, "\uPhi"] & 
        \uSH_\idh(S_0) \arrow[d, "\rho_!","\simeq"'] \\
        \SHpf(S) \arrow[r, "\iota_!"] &
        \uSHpf(S) \arrow[r, "\L_{\pcdh}"] &
        \uSH^\pf_\pcdh(S).
    \end{tikzcd}
\end{center}
Now $\SH_\idh(S_0)$ is the essential image of $\L_\idh  \iota_!$ by definition, and $\SHpf_\pcdh(S)$  is the essential image of $\L_\pcdh \iota_!$ by Lemma~\ref{Lem:iota_SHpcdh}. Whence the claim, since $\rho_!$ is invertible and $\Phi$ is a localization.
\end{proof}

\begin{Prop} \label{Prop:SH=SHidh-away-from-p}
There is a canonical equivalence 
\[ \SH(S_0)[1/p] \xrightarrow{\sim} \SH_\idh(S_0)[1/p]. \]
\end{Prop}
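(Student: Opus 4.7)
The strategy is to identify the essential image of the fully faithful right adjoint $\SH_\idh(S_0) \hookrightarrow \SH(S_0)$ with $\SH(S_0)[1/p] \subseteq \SH(S_0)$. Once this is established, the desired equivalence follows immediately, since $\SH_\idh(S_0)[1/p] \simeq \SH_\idh(S_0)$ by Propositions~\ref{Prop:SHidh=SHpf} and~\ref{Prop:SHpf-p-invertible} (the former identifies $\SH_\idh(S_0)$ with $\SH^\pf(S)$, in which $p$ is already invertible by the latter).

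One inclusion is immediate: the essential image of $\SH_\idh(S_0) \hookrightarrow \SH(S_0)$ lies in $\SH(S_0)[1/p]$, because $p$ acts invertibly on $\SH_\idh(S_0) \simeq \SH^\pf(S)$ and the right adjoint is a full embedding.

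For the reverse inclusion, I would take $X \in \SH(S_0)$ on which $p$ is invertible and show that $X$ is $\idh$-local. By Khan's theorem \cite[Thm.~9]{Khan-SH_cdh}, $\iota_! X \in \uSH(S_0)$ is $\cdh$-local, so we are reduced to upgrading $\cdh$-locality to $\idh$-locality. By Proposition~\ref{Prop:cdh_to_idh}, it suffices to check that $\iota_! X$ inverts universal homeomorphisms, i.e., for every universal homeomorphism $u \colon T_0 \to S_0$ in $\SchfpSn$ the unit map $X \to u_* u^* X$ is an equivalence. This is exactly the content of Elmanto--Khan \cite[Cor.~2.1.5]{Elmanto-Khan-perfection}: on $\SH(-)[1/p]$ the functor $u^*$ is an equivalence with inverse $u_*$, so $u_* u^* \simeq \id$ canonically on the subcategory where $p$ is invertible.

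The main technical subtlety is the last step: Proposition~\ref{Prop:cdh_to_idh} is stated on the level of sheaves, so one needs to transfer it to the motivic stable setting. This should follow by applying the criterion levelwise to the $\T$-spectra of Nisnevich $\A^1$-invariant sheaves comprising $\uSH_\cdh(S_0)$, together with the compatibility between $\iota_!$ and pullback along universal homeomorphisms (using that for $X$ coming from $\SH(S_0)$, whose levels are $\A^1$-invariant Nisnevich sheaves on $\Sm_{S_0}$, the comparison of $u^* \iota_!$ and $\iota_! u^*$ can be controlled). Once this transfer is nailed down, the argument is purely formal.
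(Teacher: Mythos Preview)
Your overall strategy is valid but takes a more circuitous route than the paper. The paper proves this proposition independently of Propositions~\ref{Prop:SHpf-p-invertible} and~\ref{Prop:SHidh=SHpf}: it simply shows that for $F \in \SH(S_0)[1/p]$ the object $\iota_!F$ already lies in $\uSH_\idh(S_0)$, so that $\L_\idh\iota_!$ restricted to $\SH(S_0)[1/p]$ agrees with the fully faithful $\iota_!$; essential surjectivity is then immediate from the definition of $\SH_\idh$. Your argument instead imports Propositions~\ref{Prop:SHpf-p-invertible},~\ref{Prop:SHidh=SHpf} and (implicitly) Proposition~\ref{Prop:Psi_ff} to obtain the fully faithful right adjoint and the forward inclusion. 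This is not circular, but you should make the reliance on Proposition~\ref{Prop:Psi_ff} explicit, since full faithfulness of a right adjoint $\SH_\idh(S_0) \hookrightarrow \SH(S_0)$ is not built into Definition~\ref{Def:SHidh}. What your route buys is a conceptual picture (identifying a reflective subcategory); what the paper's route buys is logical independence, so that the three propositions in \S\ref{Sec:comparison} stand in parallel rather than in a chain.

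There is also a slip in your reverse inclusion. The criterion from Proposition~\ref{Prop:cdh_to_idh} requires $\iota_! X$ to invert \emph{every} universal homeomorphism $f \colon Y \to X'$ in $\SchfpSn$, not only those with target $S_0$; the unit $X \to u_*u^*X$ for $u \colon T_0 \to S_0$ is the wrong object to test. The paper's fix---which also dissolves the ``technical subtlety'' you flag---is Khan's section formula \cite[Prop.~5]{Khan-SH_cdh}: for $p \colon X' \to S_0$ and $q = pf$ one has $\Gamma(X',\iota_!X) \simeq \Hom_{\SH(S_0)[1/p]}(1_{S_0},p_*p^*X)$ and likewise for $Y$, and Elmanto--Khan applied to $f$ over $X'$ gives $p_*p^*X \simeq p_*f_*f^*p^*X = q_*q^*X$ since $p^*X \in \SH(X')[1/p]$. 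This is the precise way to ``control the comparison of $u^*\iota_!$ and $\iota_! u^*$'' that you gesture at, and it works directly at the spectrum level without any levelwise bookkeeping.
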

\begin{proof}
Consider the functor $\L_\idh\circ\iota_! \colon \SH(S_0) \to \SH_\idh(S_0)$ from Definition~\ref{Def:SHidh}. We claim that its localization $\SH(S_0)[1/p] \to \SH_\idh(S_0)[1/p]$ is an equivalence.
Given a universal homeomorphism $f\colon Y\to X$ in $\SchfpSn$, the  functor
\[ f^* \colon \SH(X)[1/p] \to \SH(Y)[1/p] \]
is an equivalence \cite[Cor.~2.1.5]{Elmanto-Khan-perfection}. In particular, for every $G\in\SH(X)[1/p]$ the map $G \to f_*f^*G$ is an equivalence. Denoting $p\colon X\to S$ and $q\colon Y\to S$ the structure morphisms (so that $q=pf$), we get for every $F\in\SH(S_0)[1/p]$ that 
\begin{align*}
         \Gamma(X,\iota_!F) &\simeq \Hom_{\SH(S_0)[1/p]}(1_S,p_*p^*F) \\
         &\simeq \Hom_{\SH(S_0)[1/p]}(1_S,q_*q^*F) \simeq \Gamma(Y,\iota_!F)
\end{align*}
using \cite[Prop.~5]{Khan-SH_cdh}. Thus $\iota_!F$ lies in $\SHu_\idh(S_0)[1/p]$ so that the functor $\SH(S_0)[1/p] \to \SH_\idh(S_0)[1/p]$ is fully faithful. As it is essentially surjective by construction, it is an equivalence.
\end{proof}

\begin{Thm} \label{Thm:comparison-results-all}
    We have a sequence of natural equivalences
    \[ \SH^\pf(S)[1/p] \simeq \SH^\pf(S) \simeq \SH_\idh(S_0) \simeq  \SH_\idh(S_0)[1/p] \simeq \SH(S_0)[1/p]. \]
\end{Thm}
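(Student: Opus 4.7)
The proof is essentially an assembly argument: all four equivalences have been proven individually in the preceding propositions, so the plan is to chain them together and verify compatibility.

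First, I would invoke Proposition~\ref{Prop:SHpf-p-invertible} to get the leftmost equivalence $\SH^\pf(S)[1/p] \simeq \SH^\pf(S)$; this is where the key input (that $p$ already acts invertibly on $\SHpf(S)$, via the Frobenius computation in $\mathrm{GW}(\F_p)$) enters. Next, Proposition~\ref{Prop:SHidh=SHpf} supplies the equivalence $\SH^\pf(S) \simeq \SH_\idh(S_0)$, induced by the perfection functor and built on the topos equivalence of Corollary~\ref{Cor:idh=pcdh}. Finally, Proposition~\ref{Prop:SH=SHidh-away-from-p} gives $\SH_\idh(S_0)[1/p] \simeq \SH(S_0)[1/p]$, coming from Elmanto--Khan's invariance of $\SH[1/p]$ under universal homeomorphisms.

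The only equivalence in the stated chain which is not literally one of these three propositions is $\SH_\idh(S_0) \simeq \SH_\idh(S_0)[1/p]$. To obtain this, I would chase around the square
\begin{center}
    \begin{tikzcd}
        \SH_\idh(S_0) \arrow[r, "\sim"] \arrow[d] & \SHpf(S) \arrow[d, "\sim"] \\
        \SH_\idh(S_0)[1/p] \arrow[r, "\sim"] & \SHpf(S)[1/p],
    \end{tikzcd}
\end{center}
where the top row is Proposition~\ref{Prop:SHidh=SHpf}, the right column is Proposition~\ref{Prop:SHpf-p-invertible}, and the bottom row is the $p$-inverted version of the top row (which exists because Proposition~\ref{Prop:SHidh=SHpf} is induced by a functor in $\PrLotimesst$ and hence localizes). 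The left vertical arrow is therefore an equivalence by two-out-of-three.

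Concatenating these equivalences yields the stated chain. The main substantive work has already been done in the three propositions; the only subtlety in this final assembly is checking that the equivalences are natural in $S_0$, which follows in each case from the corresponding naturality statements: naturality of $\Phi$ from Proposition~\ref{Prop:Phi_comm_susp}, naturality of $\L_\idh \circ \iota_!$ in $\SchFp^{\op}$, and the fact that localization at $p$ commutes with symmetric monoidal left adjoints.
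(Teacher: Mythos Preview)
Your proposal is correct and matches the paper's own proof, which simply says to combine Propositions~\ref{Prop:SHpf-p-invertible}, \ref{Prop:SHidh=SHpf}, and \ref{Prop:SH=SHidh-away-from-p}. Your square chase for the middle equivalence $\SH_\idh(S_0) \simeq \SH_\idh(S_0)[1/p]$ is exactly the intended unpacking of ``combine'', and your remarks on naturality are a reasonable (if optional) supplement.
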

\begin{proof}
    This follows by combining Proposition~\ref{Prop:SHpf-p-invertible}, Proposition~\ref{Prop:SHidh=SHpf}, and Proposition~\ref{Prop:SH=SHidh-away-from-p}.
\end{proof}

\begin{Rem}
    Strictly speaking, one does not need the full force of $\SHpf$ as coefficient system in order to carry out the comparison $\SHpf \simeq \SH[1/p]$: it is sufficient to have Proposition~\ref{Prop:Phi_comm_susp} and Proposition~\ref{Prop:Psi_ff}. From there, Theorem~\ref{Thm:comparison-results-all} implies Corollary~\ref{Cor:SH_motivic_coefficient_systems}, since we already know that $\SH[1/p]$ is a coefficient system. However, we expect some of the arguments that went into Corollary~\ref{Cor:SH_motivic_coefficient_systems} to be of interest in their own right, for example in constructing six functor formalisms that invert universal homeomorphisms beyond the characteristic $p$ case.   
\end{Rem}

\section{K-theory}
\label{Sec:K-theory}
We finish this article by observing representability of algebraic K-theory in the unstable perfect motivic homotopy category. Furthermore, we identify its representing object with $\Z\times\BGL$, as in the classical case \cite[Thm.~3.13]{MorelVoevodskyA1}.\footnote{Be aware that the proof in loc.\ cit.\ relies on Proposition~1.9 in loc.\ cit.\ which is false. This has been fixed by Schlichting--Tripathi, see \cite[Rem.~2, p.~1162]{schlichting-tripathi}.}
For any scheme $X$, write $\K(X)$ for the algebraic $\K$-theory of $X$, obtained as the (nonconnective) $\K$-theory spectrum of the $\infty$-category $\Perf(S)$ of perfect complexes on $S$, cf.\ \cite{thomason-trobaugh,bgt13}.

We still let $S_0$ be an $\Fp$-scheme with perfection $S$.
For any smooth $S$-scheme $X$ the canonical map $\K(X) \to \KH(X)$ is an equivalence; this can be checked locally on affine open subsets where it is known by a result of Antieau--Mathew--Morrow \cite[Prop.~5.1]{AntieauPerfectoid}.\footnote{However, algebraic K-theory of perfect schemes may be non-connective \cite[\S 7.2]{LowitEquivariant}.}
In particular, algebraic K-theory is $\A^1$-invariant on perfect $\F_p$-schemes. From this, we also can deduce $\barA$-invariance:

\begin{Prop} \label{Prop:K-connective-lies-in-Hpf}
    As a spectrum-valued presheaf on $\PerfpfpS$, algebraic K-theory is an $\barA$-invariant perfect Nisnevich sheaf.
    In particular, connective algebraic K-theory is canonically an object in $\H^\pf(S)$.
\end{Prop}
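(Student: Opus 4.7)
The plan is to verify the perfect Nisnevich sheaf property and $\barA$-invariance separately on $\PerfpfpS$, and then obtain the statement about connective $\K$-theory by passing to the connective cover.

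For the sheaf property, I would invoke Thomason--Trobaugh Nisnevich descent: as a spectrum-valued presheaf on qcqs schemes, $\K$ is already a Nisnevich sheaf. Any perfect Nisnevich square in $\PerfpfpS$ is also a Nisnevich square of qcqs schemes---étaleness and open immersions are inherited via Lemma~\ref{Lem:Bhatt.3.4}, and the closed-complement condition is purely topological---so $\K$ sends it to a Cartesian square. Together with $\K(\varnothing) \simeq 0$, Lemma~\ref{Lem:pfNis_excission} then yields perfect Nisnevich descent on $\PerfpfpS$.

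For $\barA$-invariance, I would first extend the cited equivalence $\K \simeq \KH$ from smooth $S$-schemes to every $X \in \PerfpfpS$ by combining the Antieau--Mathew--Morrow input on perfect affines with Nisnevich descent of both $\K$ and $\KH$. Then for such $X$, since $X$ is perfect the product $X \times \barA$ is the perfection of $X \times \A^1$, i.e.\ the cofiltered limit of $X \times \A^1$ along the absolute Frobenius (a universal homeomorphism by Lemma~\ref{Lem:Bhatt.3.8}). The fact that $\KH$ sends such limits with affine transitions to filtered colimits gives $\KH(X \times \barA) \simeq \colim_n \KH(X \times \A^1)$, and the explicit $\A^1$-homotopy $H(x,t) = (1-t)x + tx^p$ between $\mathrm{Frob}$ and $\mathrm{id}$ on $\A^1$ shows each transition is the identity on $\KH$. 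Combined with classical $\A^1$-invariance of $\KH$, this yields $\K(X \times \barA) \simeq \KH(X \times \barA) \simeq \KH(X) \simeq \K(X)$.

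For the connective statement, the truncation $\tau_{\geq 0} \colon \Sp \to \Sp^{\geq 0}$ is right adjoint to the inclusion and so preserves limits; hence $\tau_{\geq 0}\K$ is again an $\barA$-invariant perfect Nisnevich sheaf. Passing to the underlying pointed space $\Omega^\infty \K \simeq \Omega^\infty \tau_{\geq 0}\K$ and restricting to $\Sm_S^\pf$ gives the claimed canonical object of $\H^\pf(S)$. The main obstacle I anticipate is extending the $\K \simeq \KH$ equivalence from smooth to arbitrary $X \in \PerfpfpS$ cleanly, since the footnote warning about non-connective $\K$-theory of perfect schemes signals that some care is needed with the Antieau--Mathew--Morrow input; everything else is either formal or classical.
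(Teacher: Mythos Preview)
Your proof is correct and follows the same overall plan as the paper. The Nisnevich descent argument is identical. For $\barA$-invariance, the paper's route is slightly more direct: rather than passing through $\KH$ and invoking an explicit $\A^1$-homotopy to trivialize the transition maps, it uses continuity of $\K$ itself (Thomason--Trobaugh) to write $\K(\barA_X) \simeq \colim_n \K(\A^1_X)$ along Frobenius, and then compares this tower to the tower $\K(X) \xrightarrow{\phi^*} \K(X) \xrightarrow{\phi^*} \cdots$ via the projection $\A^1_X \to X$. Since each vertical map $\K(X) \to \K(\A^1_X)$ is an equivalence by $\A^1$-invariance of $\K$ on perfect schemes, the induced map on colimits is an equivalence---no need to identify the transition maps with the identity at all.

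Two remarks. First, your anticipated obstacle is not one: the Antieau--Mathew--Morrow input already applies to arbitrary perfect $\F_p$-algebras, so combined with Nisnevich descent one has $\K \simeq \KH$ on all qcqs perfect $\F_p$-schemes, hence on all of $\PerfpfpS$; the paper's phrase ``smooth $S$-scheme'' just before the proposition refers to perfectly smooth $S$-schemes, which are perfect. Second, a small caution in your homotopy argument: $H(x,t)=(1-t)x+tx^p$ connects the identity to the \emph{relative} Frobenius on $\A^1$, whereas your tower runs along the \emph{absolute} Frobenius on $\A^1_X$. These differ by the Frobenius on the perfect base $X$, which is invertible, so the conclusion is unaffected---but it is worth saying so explicitly (or, equivalently, computing $X \times \barA$ as the limit along $\id_X \times \phi_{\A^1}$, which is legitimate precisely because $X$ is perfect).
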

\begin{proof}
By definition, a perfect Nisnevich square is just a Nisnevich square which happens to consist of perfect schemes. Hence algebraic K-theory is a perfect Nisnevich sheaf as it is a Nisnevich sheaf.

Now let $X\in\PerfpfpS$. By $\A^1$-invariance of K-theory for perfect schemes, the map
\begin{center} \begin{tikzcd}
    \K(X) \arrow[d] & \arrow[l,"\simeq",swap] \colim\bigl( \K(X) \arrow[d,"\simeq"] \arrow[r,"\phi^*"] & \K(X) \arrow[d,"\simeq"] \arrow[r,"\phi^*"] & \ldots \bigr) \\
    \K(\barA_X) & \arrow[l,"\simeq",swap] \colim\bigl( \K(\A^1_X) \arrow[r,"\phi^*"] & \K(\A^1_X) \arrow[r,"\phi^*"] & \ldots \bigr) 
\end{tikzcd} \end{center}
is an equivalence, where $\phi^*$ denotes the map induced by the Frobenius and we have used  continuity of K-theory for the equivalences in the horizontal direction \cite[Thm.~7.2]{thomason-trobaugh}.
\end{proof}

Write $\K_{\geq 0}^\pf$ for the representing object of connective algebraic K-theory in $\H^\pf(S)$, and likewise for $\K_{\geq 0} \in \H(S)$. Recall that $\varphi^\PPP \colon \PPP(\Sm_{S}) \to \PPP(\Sm_S^\pf)$ is left Kan extension along the perfection functor.

\begin{Lem}
\label{Lem:varphi_K}
    There are canonical equivalences $\varphi^\PPP_{S}(\K_{\geq 0}) \simeq \varphi_{S}(\K_{\geq 0}) \simeq \K_{\geq 0}^\pf$.
\end{Lem}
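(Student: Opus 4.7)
My plan is to establish the second equivalence $\varphi^\PPP_S(\K_{\geq 0}) \simeq \K_{\geq 0}^\pf$ first, and then deduce the first equivalence formally.

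For the second equivalence, since $\H^\pf(S)$ can be computed using $\Sm_S^{\pf'}$ by Zariski descent, it suffices to compare values on $X \in \Sm_S^{\pf'}$. Write $X = X_0^\pf$ for some $X_0 \in \Sm_S$, and let $X_n \to X_{n-1} \to \dots \to X_0$ be the Frobenius tower, so that each $X_n$ is smooth over $S$ (by perfectness of $S$ and naturality of Frobenius) and $X \simeq \lim_n X_n$ with affine transition maps. The pointwise formula for left Kan extension gives
\[ \varphi^\PPP_S(\K_{\geq 0})(X) \simeq \colim_{(X \to Y) \in X/\Sm_S} \K_{\geq 0}(Y). \]
By the same argument as in the proof of Lemma~\ref{Lem:phi_iota} (any map $X \to Y$ with $Y$ finitely presented factors through some $X_n$, with $n_Y$ minimal), the comma category $X/\Sm_S$ admits a deformation retract onto the cofiltered poset $\{X_n\}_{n \in \N}$, which is hence initial. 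This identifies the colimit with $\colim_n \K_{\geq 0}(X_n)$.

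Now I would invoke continuity of K-theory \cite[Thm.~7.2]{thomason-trobaugh} to get $\K(X) \simeq \colim_n \K(X_n)$, and then commute $\tau_{\geq 0}$ with the filtered colimit to deduce $\K_{\geq 0}(X) \simeq \colim_n \K_{\geq 0}(X_n)$. Combined with the identity $\K_{\geq 0}^\pf(X) = \K_{\geq 0}(X)$ from Proposition~\ref{Prop:K-connective-lies-in-Hpf}, this establishes the second equivalence.

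For the first equivalence, recall that $\varphi_S$ is the composition of $\varphi^\PPP_S$ (restricted to $\H(S)$) with the localization $\Lpf_S \colon \PPP(\Sm_S^\pf) \to \H^\pf(S)$. Since we have just identified $\varphi^\PPP_S(\K_{\geq 0})$ with $\K_{\geq 0}^\pf$, which already lies in $\H^\pf(S)$ by Proposition~\ref{Prop:K-connective-lies-in-Hpf}, the localization acts as the identity and $\varphi^\PPP_S(\K_{\geq 0}) \simeq \varphi_S(\K_{\geq 0})$ follows.

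The main technical point I expect to require some care is the cofinality of the Frobenius tower in $X/\Sm_S$: this relies on the fact that for any $Y \in \Sm_S$ the mapping set $\Hom_S(X,Y) = \colim_n \Hom_S(X_n,Y)$, which requires $Y$ to be (locally) finitely presented over $S$; this is exactly where the hypothesis that we work with finitely presented smooth schemes (as in the paper's conventions) enters.
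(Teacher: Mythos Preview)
Your proposal is correct and follows essentially the same route as the paper: restrict to $\Sm_S^{\pf'}$, compute $\varphi^\PPP$ via the pointwise left Kan extension formula, use the Frobenius tower $\{X_n\}$ and the argument from Lemma~\ref{Lem:phi_iota} to reduce the indexing category, then apply continuity of K-theory and Proposition~\ref{Prop:K-connective-lies-in-Hpf}. Your explicit mention of commuting $\tau_{\geq 0}$ with the filtered colimit and your separate deduction of $\varphi^\PPP_S(\K_{\geq 0}) \simeq \varphi_S(\K_{\geq 0})$ from the fact that the target already lies in $\H^\pf(S)$ make the argument slightly more explicit than the paper's ``whence the claim'', but the substance is identical.
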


\begin{proof}
    Take $X \in \Sm_S^{\pf '}$ with smooth model $X_0$. It holds
    \[ \varphi^\PPP(\K_{\geq 0})(X) \simeq \colim_{X_\alpha \in X/\Sm_S}\K_{\geq 0}(X_\alpha). \]
    Let $\FFF \subset X/\Sm_S$ be the full subcategory spanned by factorizations of the form $X \to X_n \to S$, where $X_n \to S$ is induced by the $n$-fold Frobenius on $X_0$ and $X \to X_n$ is the perfection. By a similar argument as in the proof of Lemma~\ref{Lem:phi_iota}, the inclusion $\FFF \to X/\Sm_S$ is initial, so that
    \[ \varphi^\PPP(\K_{\geq 0})(X) \simeq \colim_{X_n \in \FFF}\K_{\geq 0}(X_n) \simeq \K_{\geq 0}(X) \]
    by continuity of K-theory. Now $X \mapsto \K_{\geq 0}(X)$ is already an $\barA$-invariant Nisnevich sheaf by Proposition~\ref{Prop:K-connective-lies-in-Hpf}, whence the claim.
\end{proof}

We write $\BGL \coloneqq \colim_n \BGL_n$ both in $\H$ and in $\Hpf$, where $\BGL_n$ is the classifying space of $\GL_n$ (and we suppress the (perfect) motivic localization from notation).

\begin{Lem}
\label{Lem:phi_BGL}
    It holds that $\varphi^\PPP(\Z \times \BGL) \simeq \Z \times \BGL$.
\end{Lem}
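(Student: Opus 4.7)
The plan is to reduce to computing $\varphi^\PPP$ on each $\BGL_n$ separately, present $\BGL_n$ at the presheaf level as a colimit of representables, and then invoke Lemma~\ref{Lem:phiP_yoneda} levelwise.

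First, since $\Z$ is a $0$-truncated set, it decomposes as $\coprod_{n \in \Z} \ast$ in presheaves of spaces, whence $\Z \times \BGL \simeq \coprod_{n \in \Z} \BGL$ in both $\PPP(\Sm_S)$ and $\PPP(\Sm_S^\pf)$. As a left adjoint, $\varphi^\PPP$ preserves both this coproduct and the colimit $\BGL \simeq \colim_n \BGL_n$, so it suffices to establish $\varphi^\PPP(\BGL_n) \simeq \BGL_n$ in $\PPP(\Sm_S^\pf)$ for each fixed $n$.

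Next, I would model $\BGL_n$ at the presheaf level via the bar construction $|[k] \mapsto h(\GL_n^k)|$, whose terms are representables of smooth affine $S$-schemes. (The argument works equally well if one instead uses the Grassmannian model $\colim_N h(\mathrm{Gr}_n(\A^N))$, each stage being a smooth projective $S$-scheme.) The corresponding presheaf model of $\BGL_n$ in $\PPP(\Sm_S^\pf)$ is then $|[k] \mapsto h((\GL_n^\pf)^k)|$, where $\GL_n^\pf$ is the perfection of $\GL_n$.

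Combining these two steps, since $\varphi^\PPP$ is cocontinuous and by Lemma~\ref{Lem:phiP_yoneda} sends $h(\GL_n^k)$ to $h((\GL_n^k)^\pf)$, and since perfection is a right adjoint on $\Sch_{\F_p}$ and therefore commutes with the finite products $(\GL_n^k)^\pf \simeq (\GL_n^\pf)^k$, one obtains the desired identification by reassembling across the simplicial direction.

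The main obstacle is committing to a specific representable presheaf-level model of $\BGL_n$ in both $\H(S)$ and $\Hpf(S)$ that is compatible with the functor $\varphi^\PPP$. Once the bar-construction (or Grassmannian) model is fixed on both sides, the functoriality of perfection on the building blocks, combined with the cocontinuity of $\varphi^\PPP$, delivers the conclusion.
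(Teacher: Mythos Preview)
Your proposal is correct and follows essentially the same route as the paper: reduce to each $\BGL_n$ using that $\varphi^\PPP$ preserves colimits, realize $\BGL_n$ via the bar construction on $\GL_n$, and apply Lemma~\ref{Lem:phiP_yoneda} levelwise. The only cosmetic difference is that the paper packages the step $\varphi^\PPP(\GL_n^k) \simeq (\GL_n^\pf)^k$ via the symmetric monoidality of $\varphi^\PPP$ (Lemma~\ref{Lem:phiP_yoneda}, second sentence) and the universal property of $\GL_n$, whereas you phrase it as perfection commuting with finite products; these are equivalent.
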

\begin{proof}
Since  $\varphi^\PPP$ is symmetric monoidal and colimit-preserving, we get that
\[ \varphi^\PPP(\Z\times \BGL) \simeq \Z \times \colim_n \varphi^\PPP(\BGL_n).\]
Since $\B G \simeq \colim_{[n]\in\Delta^\op} G^{n-1}$ for any group object $G$, it suffices to show that $\varphi^\PPP(\GL_n) \simeq \GL_n$. The latter follows by the fact that $\GL_n$ is representable, together with Lemma~\ref{Lem:phiP_yoneda} and the universal property of $\GL_n$. 
\end{proof}

\begin{Cor} \label{Cor:K=ZxBGL}
     We have equivalences    
    \[ \K^\pf_{\geq 0} \simeq \varphi(\K_{\geq 0}) \simeq  \varphi^\PPP(\K_{\geq 0}) \simeq \varphi^\PPP(\Z \times \BGL) \simeq \Z \times \BGL \]
    in $\H^\pf(S)$. 
\end{Cor}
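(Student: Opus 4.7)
The plan is to string together Lemma~\ref{Lem:varphi_K}, Lemma~\ref{Lem:phi_BGL}, and the classical Morel--Voevodsky representability theorem $\K_{\geq 0} \simeq \Z \times \BGL$ in $\H(S)$ (as corrected by Schlichting--Tripathi). First I would invoke Lemma~\ref{Lem:varphi_K} to obtain the two leftmost equivalences $\K^\pf_{\geq 0} \simeq \varphi(\K_{\geq 0}) \simeq \varphi^\PPP(\K_{\geq 0})$, where the middle term makes sense in $\H^\pf(S)$ since $\varphi$ is the restriction of $\L^\pf \circ \varphi^\PPP$ to $\H(S)$.

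Next, to produce the equivalence $\varphi^\PPP(\K_{\geq 0}) \simeq \varphi^\PPP(\Z \times \BGL)$ in $\H^\pf(S)$, I would take the classical equivalence $\K_{\geq 0} \simeq \Z \times \BGL$ in $\H(S)$ and push it through $\varphi$. This yields $\varphi(\K_{\geq 0}) \simeq \varphi(\Z \times \BGL)$ in $\H^\pf(S)$. Now I identify each side with its $\varphi^\PPP$-analogue after perfect motivic localization: the left side is $\varphi^\PPP(\K_{\geq 0})$ by Lemma~\ref{Lem:varphi_K}, and the right side becomes $\varphi^\PPP(\Z \times \BGL)$ by the defining relation $\varphi = \L^\pf \circ \varphi^\PPP$ together with the fact that both $\K_{\geq 0}$ and $\Z \times \BGL$ are already perfect motivic spaces (hence unchanged by $\L^\pf$).

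Finally, I would close the chain by invoking Lemma~\ref{Lem:phi_BGL}, which gives $\varphi^\PPP(\Z \times \BGL) \simeq \Z \times \BGL$.

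There is no real obstacle; all the substantive content sits in Proposition~\ref{Prop:K-connective-lies-in-Hpf} (which ensures that algebraic $\K$-theory is a perfect motivic space in the first place), in Lemma~\ref{Lem:varphi_K} (which computes $\varphi^\PPP(\K_{\geq 0})$ by exploiting continuity of $\K$-theory to reduce the Kan extension colimit to the Frobenius tower), and in Lemma~\ref{Lem:phi_BGL} (which uses that $\varphi^\PPP$ is symmetric monoidal and commutes with representables). The only bookkeeping point is to remember that the middle equivalences are most naturally formulated inside $\H^\pf(S)$, after which passage between $\varphi$ and $\varphi^\PPP$ is transparent.
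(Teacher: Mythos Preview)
Your proposal is correct and follows the same route as the paper: invoke Lemma~\ref{Lem:varphi_K}, the classical equivalence $\K_{\geq 0}\simeq \Z\times\BGL$ in $\H(S)$, and Lemma~\ref{Lem:phi_BGL}. One small wording slip: in the middle step it is $\varphi^{\PPP}(\K_{\geq 0})$ and $\varphi^{\PPP}(\Z\times\BGL)$ (not $\K_{\geq 0}$ and $\Z\times\BGL$ themselves) that are already perfect motivic spaces and hence fixed by $\L^{\pf}$; this is exactly what Lemma~\ref{Lem:varphi_K} and Lemma~\ref{Lem:phi_BGL} establish.
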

\begin{proof}
    Since $\K_{\geq 0} \simeq \Z \times \BGL$ holds in $\H(S)$, the claim follows from Lemma~\ref{Lem:varphi_K} and Lemma~\ref{Lem:phi_BGL}.
\end{proof}

\begin{Rem}
     Observe that algebraic $\K$-theory is not representable in $\SH^\pf$ by Proposition~\ref{Prop:SHpf-p-invertible}, since $\K_0(X)$ is not a $\Z[1/p]$-module in general, even for perfect schemes $X$, e.g., $\K_0(\F_p)\simeq\Z$. On the other hand, we have seen that connective algebraic K-theory is representable in $\H^\pf$ in Proposition~\ref{Prop:K-connective-lies-in-Hpf}. 
     
     In the ordinary setting, the Bass Fundamental Theorem is at the heart of concluding the representability of algebraic K-theory in $\SH$ from the corresponding statement in $\H$. In contrast, the perfect analogue of the Bass Fundamental Theorem does not hold true, that is, for $n\geq 1$ there is not necessarily an exact sequence
    \[ 0 \to \K_n(X) \to \K_n(\barA_X)\oplus \K_n(\barA[-1]_X) \to \K_n(\bar{\G}_{m,X}) \to \K_{n-1}(X) \to 0. \]
    Indeed, setting $X=\Spec(\F_p)$ and $n=1$ the Bass Fundamental Theorem yields an exact sequence
    \[ 0 \to \K_1(\F_p) \to \K_1(\F_p[t])\oplus \K_1(\F_p[t\inv]) \to \K_1(\F_p[t,t\inv]) \to \K_0(\F_p) \to 0 \]
    that identifies with the exact sequence
    \[ 0 \to \F_p^\times \to[\Delta] \F_p^\times \oplus \F_p^\times \to[(\pm,0)] \F_p^\times \oplus \Z \to[\mathrm{pr}_2] \Z \to 0, \]
   where $(\pm,0)$ sends $(x,y)$ to $(xy^{-1},0)$.  
    Applying the colimit along the maps induced by the Frobenius on $\G_{m,\F_p}$ we get that $\K_1(\bar{\G}_{m}) \cong \F_p^\times \oplus \Z[1/p]$ which cannot map surjectively to $\K_0(\F_p)$.
\end{Rem}

\bibliographystyle{dary}
\bibliography{refs}	
\end{document}